\documentclass[11pt, twoside, leqno]{article}
\usepackage{amssymb}
\usepackage{amsmath}
\usepackage{amsthm}
\usepackage{color}
\usepackage{mathrsfs}

\usepackage{indentfirst}
\usepackage{graphicx}
\usepackage{txfonts}

\allowdisplaybreaks

\pagestyle{myheadings}\markboth{\footnotesize\rm\sc
Junfeng Li, Haixia Yu and Minqun Zhao}
{\footnotesize\rm\sc  Bilinear Fractional Integrals}

\textwidth=15cm
\textheight=21.08cm
\oddsidemargin 0.35cm
\evensidemargin 0.35cm

\parindent=13pt

\def\R2{{\mathbb R}^2}

\def\ls{\lesssim}
\def\gs{\gtrsim}

\def\XXint#1#2#3{{\setbox0=\hbox{$#1{#2#3}{\int}$ }
\vcenter{\hbox{$#2#3$ }}\kern-.6\wd0}}

\def\({\left(}
\def \){ \right)}






\newtheorem{theorem}{Theorem}[section]
\newtheorem{lemma}[theorem]{Lemma}

\newtheorem{proposition}[theorem]{Proposition}
\newtheorem{example}[theorem]{Example}
\theoremstyle{definition}
\newtheorem{remark}[theorem]{Remark}

\renewcommand{\appendix}{\par
   \setcounter{section}{0}%
   \setcounter{subsection}{0}%
   \setcounter{subsubsection}{0}%
   \gdef\thesection{\@Alph\c@section}%
   \gdef\thesubsection{\@Alph\c@section.\@arabic\c@subsection}%
   \gdef\theHsection{\@Alph\c@section.}%
   \gdef\theHsubsection{\@Alph\c@section.\@arabic\c@subsection}%
   \csname appendixmore\endcsname
 }

\numberwithin{equation}{section}

\begin{document}

\arraycolsep=1pt

\title{\bf\Large The Boundedness of the Bilinear Fractional Integrals along Curves
\footnotetext{\hspace{-0.35cm} 2020 {\it
Mathematics Subject Classification}. Primary 42B15; Secondary 42A85.
\endgraf {\it Key words and phrases.} Bilinear fractional integral, Convolution type operator, Bilinear interpolation, Restricted weak type estimate.
\endgraf Junfeng Li is supported by the National Natural Science Foundation of China (No.~12471090). Haixia Yu is supported by the National Natural Science Foundation of China (Nos.~12201378, 12471093), the Guangdong Basic and Applied Basic Research Foundation (Nos.~2023A1515010635, 2024A1515010468) and the Shantou University SRFT.}}
\author{Junfeng Li, Haixia Yu\footnote{Corresponding author.} ~and Minqun Zhao}

\date{}

\maketitle

\vspace{-0.7cm}

\begin{abstract}
 In this paper, for general curves $(t,\gamma(t))$ satisfying some suitable curvature conditions, we obtain some $L^p(\mathbb{R})\times L^q(\mathbb{R}) \rightarrow L^r(\mathbb{R})$ estimates for the bilinear fractional integrals $H_{\alpha,\gamma}$ along the curves $(t,\gamma(t))$, where
 $$H_{\alpha,\gamma}(f,g)(x):=\int_{0}^{\infty}f(x-t)g(x-\gamma(t))\,\frac{\textrm{d}t}{t^{1-\alpha}}$$
 and $\alpha\in (0,1)$. At the same time, we also establish an almost sharp Hardy-Littlewood-Sobolev inequality, i.e., the $L^p(\mathbb{R})\rightarrow L^q(\mathbb{R})$ estimate, for the fractional integral operators $I_{\alpha,\gamma}$ along the curves $(t,\gamma(t))$, where
$$I_{\alpha,\gamma}f(x):=\int_{0}^{\infty}\left|f(x-\gamma(t))\right|\,\frac{\textrm{d}t}{t^{1-\alpha}}.$$
\end{abstract}

\section{Introduction}

The purpose of this paper is to prove some $L^p(\mathbb{R})\times L^q(\mathbb{R}) \rightarrow L^r(\mathbb{R})$ estimates for bilinear operators of fractional integral type. The classical example of these operators is the bilinear fractional integral
\begin{align}\label{eq:1.00}
 H_{\alpha}(f,g)(x):=\int_{\mathbb{R}^n}f(x-t)g(x+t)\,\frac{\textrm{d}t}{|t|^{n-\alpha}}
 \end{align}
with $0<\alpha<n$, and the $L^p(\mathbb{R}^n)\times L^q(\mathbb{R}^n) \rightarrow L^r(\mathbb{R}^n)$ boundedness of $H_{\alpha}$ can be founded in Kenig and Stein \cite{KStein}, Grafakos and Kalton \cite{Grafk}, and also Grafakos \cite{Graf92}, where $(\frac{1}{p},\frac{1}{q},\frac{1}{r})$ lies in the open convex hull of the pentagon with vertices $(\frac{\alpha}{n}, 0, 0)$, $(1, 0, 1-\frac{\alpha}{n})$, $(0, \frac{\alpha}{n}, 0)$, $(0, 1, 1-\frac{\alpha}{n})$ and $(1, 1, \frac{2n-\alpha}{n})$. Furthermore, they also showed that a restricted weak type estimate holds at each vertex of the pentagon. On the other hand, by homogeneity, we note that $H_{\alpha}(f,g)$ maps $L^p(\mathbb{R}^n)\times L^q(\mathbb{R}^n)$ to $ L^r(\mathbb{R}^n)$ only when $$\frac{1}{r}=\frac{1}{p}+\frac{1}{q}-\frac{\alpha}{n}.$$

More precisely, in the present paper, we focus our attention on the bilinear fractional integral $H_{\alpha,\gamma}$ along a curve $(t,\gamma(t))$, which has the form
\begin{align}\label{eq:1.0}
 H_{\alpha,\gamma}(f,g)(x):=\int_{0}^{\infty}f(x-t)g(x-\gamma(t))\,\frac{\textrm{d}t}{t^{1-\alpha}},
 \end{align}
where $0<\alpha<1$. The first author of this paper and Liu considered the $L^p(\mathbb{R})\times L^q(\mathbb{R}) \rightarrow L^r(\mathbb{R})$ estimates for $H_{\alpha,\gamma}$ under the case of $\gamma(t):=t^\beta$ in \cite{LLiu}. Then, it is natural to characterize the indices $(\frac{1}{p}, \frac{1}{q}, \frac{1}{r})$ for which $H_{\alpha,\gamma}$ maps $L^p(\mathbb{R})\times L^q(\mathbb{R})$ to $L^r(\mathbb{R})$ for more general curves. We now state the needed conditions on the curve $(t,\gamma(t))$ and some related definitions.

\textbf{Hypothesis on curves (H.).} We assume that $\gamma\in C^2(\mathbb{R}^+)$ is monotonic on $\mathbb{R}^+$, $\lim\limits_{t\rightarrow 0^+}\gamma(t)=\lim\limits_{t\rightarrow 0^+}\gamma'(0)=0$, and satisfy $\omega_{\infty,1}\geq \omega_{0,2}$, $\omega_{0,1}>1$.
Furthermore, there exist positive constants $C^{(1)}_{1}$, $C^{(2)}_{1}$ and $C^{(1)}_2$ such that
\begin{align}\label{eq:1.1}
 \left|\frac{t\gamma'(t)}{\gamma(t)}\right|\leq C^{(1)}_2~~~\textrm{and}~~~\left|\frac{t^j\gamma^{(j)}(t)}{\gamma(t)}\right|\geq C^{(j)}_{1}~~\textrm{with}~~j=1,2.
\end{align}
In particular, we note that \textbf{(H.)} is essentially applicable to a polynomial with no linear term and constant term. Here and hereafter, we define
\begin{align*}
\omega_{0,1}:=\liminf\limits_{t\rightarrow0^{+}}{\frac{\ln{|\gamma(t)|}}{\ln t}},~~~~~&\omega_{0,2}:=\limsup\limits_{t\rightarrow0^{+}}{\frac{\ln{|\gamma(t)|}}{\ln t}},\\
\omega_{\infty,1}:=\liminf\limits_{t\rightarrow{\infty}}{\frac{\ln{|\gamma(t)|}}{\ln t}},~~~~~&\omega_{\infty,2}:=\limsup\limits_{t\rightarrow{\infty}}{\frac{\ln{|\gamma(t)|}}{\ln t}}.
\end{align*}

Next, we will state two main results of this paper. The first one is as follows.

\begin{theorem}\label{maintheorem}
Assume that $\gamma$ satisfies {\bf(H.)} and $\frac{\omega_{\infty,2}-\omega_{\infty,1}}{\omega_{\infty,2}-1}<\frac{\alpha}{1-\alpha}<\frac{\omega_{\infty,2}}{\omega_{\infty,2}-\omega_{\infty,1}+1}$. Then, there exists a positive constant $C$, such that
$$\left\|H_{\alpha,\gamma}(f,g)\right\|_{L^{r}(\mathbb{R})}\leq C \|f\|_{L^{p}(\mathbb{R})}\|g\|_{L^{q}(\mathbb{R})}$$
holds for all $f\in L^{p}(\mathbb{R})$ and $g\in L^{q}(\mathbb{R})$, where $(\frac{1}{p},\frac{1}{q},\frac{1}{r})$ lies in the open convex hull of the following points:
\begin{align*}
&A\left(\alpha,0,0\right),&&B\left(1,0,1-\alpha\right),\\
&C_1\left(0,\frac{\alpha}{\omega_{\infty,1}},0\right),&&C_2\left(0,\frac{\alpha}{\omega_{0,2}},0\right),\\
&D_1\left(0,1,1-\frac{\alpha}{\omega_{\infty,1}}\right),&&D_2\left(0,1,1-\frac{\alpha}{\omega_{0,2}}\right),\\
&E\left(1, \frac{\omega_{\infty,2}-\alpha}{2\omega_{\infty,2}-\omega_{\infty,1} }, \frac{2(\omega_{\infty,2}-\alpha)}{2\omega_{\infty,2}-\omega_{\infty,1} }\right),&&F\left(\frac{\omega_{\infty,2}-\alpha}{2\omega_{\infty,2}-\omega_{\infty,1} }, 1, \frac{2(\omega_{\infty,2}-\alpha)}{2\omega_{\infty,2}-\omega_{\infty,1} }\right).
\end{align*}
\end{theorem}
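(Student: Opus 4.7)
The plan is to establish a restricted weak-type estimate at each of the eight vertices $A$, $B$, $C_1$, $C_2$, $D_1$, $D_2$, $E$, $F$, and then invoke bilinear Marcinkiewicz (real) interpolation, in the form used by Kenig-Stein \cite{KStein} and Grafakos-Kalton \cite{Grafk}, to fill in the open convex hull with strong-type $L^p\times L^q\to L^r$ bounds. The first step is a dyadic decomposition
$$H_{\alpha,\gamma} = \sum_{j\in\mathbb{Z}} H_{\alpha,\gamma}^{j}, \qquad H_{\alpha,\gamma}^{j}(f,g)(x) := \int_0^\infty f(x-t)\,g(x-\gamma(t))\,\psi(2^{-j}t)\,\frac{\mathrm{d}t}{t^{1-\alpha}},$$
where $\psi$ is a smooth bump adapted to $[1,2]$ giving a resolution of unity on $\mathbb{R}^+$. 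The conditions $\omega_{0,1}>1$ and $\omega_{\infty,1}\geq\omega_{0,2}$ are what make the $j\to-\infty$ tail (controlled by $\omega_{0,1},\omega_{0,2}$) and the $j\to+\infty$ tail (controlled by $\omega_{\infty,1},\omega_{\infty,2}$) compatible, so that single-scale estimates can be summed coherently.

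For the six \emph{linear} vertices $A$, $B$, $C_1$, $C_2$, $D_1$, $D_2$, one input is $L^\infty$ and the operator collapses to a one-sided fractional integral. At $A(\alpha,0,0)$ and $B(1,0,1-\alpha)$ (where $g\in L^\infty$) one obtains $|H_{\alpha,\gamma}(f,g)|\lesssim \|g\|_\infty\cdot I_\alpha|f|$, and the classical Hardy-Littlewood-Sobolev bounds (restricted weak at $p=1/\alpha$, strong at $p=1$) furnish these endpoints. At the four vertices $C_1,C_2,D_1,D_2$ (where $f\in L^\infty$), the change of variable $u=\gamma(t)$, whose Jacobian $t/|\gamma(t)|$ is controlled by \eqref{eq:1.1}, reduces the operator to a one-sided fractional integral of $g$, but with \emph{different} effective exponents as $t\to 0$ and as $t\to\infty$; this is precisely why $\omega_{0,2}$ and $\omega_{\infty,1}$ each generate a vertex on each side.

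The diagonal vertices $E$ and $F$ are the main content of the theorem, and this is where the second-order part of \eqref{eq:1.1}, namely $|t^2\gamma''(t)/\gamma(t)|\geq C_1^{(2)}$, is essential. For the restricted weak-type bound at $E$ I would test against characteristic functions $f=\chi_{F_1},\,g=\chi_{F_2}$ and estimate each single-scale piece $H_{\alpha,\gamma}^j$ by the minimum of two quantities: a trivial $L^\infty\times L^\infty\to L^\infty$ size bound of order $2^{j\alpha}$, and a bilinear $L^2$ bound extracted by a $TT^*$/oscillatory-integral argument in which the Jacobian produced by $\gamma''$ yields a gain of the order $\gamma(2^j)^{-1/2}$. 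Optimising the threshold between the two bounds produces a geometric series in $j$; the interval $\frac{\omega_{\infty,2}-\omega_{\infty,1}}{\omega_{\infty,2}-1}<\frac{\alpha}{1-\alpha}<\frac{\omega_{\infty,2}}{\omega_{\infty,2}-\omega_{\infty,1}+1}$ in the hypothesis is exactly the range of $\alpha$ for which both tails of that series converge. The vertex $F$ is handled symmetrically by interchanging $f$ and $g$ in the argument.

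The most delicate part is therefore the single-scale $L^2$ bound with the correct power of $\gamma(2^j)$: deriving it using only the two-sided quantitative curvature hypotheses in \eqref{eq:1.1}, rather than a polynomial structure or convexity assumption on $\gamma$, and tracking the resulting estimate through the $j$-summation so that exactly the stated range of $\alpha$ is recovered, is the main technical obstacle. Once this step and the six linear endpoints are in place, bilinear real interpolation between the eight restricted weak-type estimates delivers the strong-type $L^p\times L^q\to L^r$ bounds at every interior point of the convex hull.
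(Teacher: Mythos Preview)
Your overall architecture---restricted weak-type bounds at the eight vertices followed by bilinear real interpolation to fill in the open hull---matches the paper, and your treatment of the six linear vertices $A,B,C_1,C_2,D_1,D_2$ is essentially what the paper does in Proposition~\ref{proposition 2.4} via Lemma~\ref{lemma 2.2}.

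Where you diverge is at $E$ and $F$. The paper does \emph{not} run a $TT^{*}$/oscillatory-integral argument with an $L^{2}$ gain of order $\gamma(2^{j})^{-1/2}$. The mechanism is more elementary: the change of variables $(u,v)=(x-t,\,x-\gamma(t))$ has Jacobian $|1-\gamma'(t)|$, which for $j>j_{0}+2$ is bounded below by $|\gamma'(2^{j-1})|$. This gives an $L^{1}\times L^{1}\to L^{1}$ bound for the single-scale operator $H_{\alpha,\gamma,j}$ with constant $|\gamma'(2^{j-1})|^{-1}$; localizing to intervals of length $|\gamma(2^{j})|$ upgrades it to an $L^{1}\times L^{1}\to L^{1/2}$ bound of size $\min\{2^{j},\,|E|/|\gamma'(2^{j-1})|\}$. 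One then interpolates against the trivial $L^{1}\times L^{\infty}\to L^{1}$ bound, applies Chebyshev on the level set $E_{\lambda}$, and performs a case analysis according to the position of $|E_{\lambda}|$ relative to $2^{j}$ and $2^{j}|\gamma'(2^{j-1})|$. The vertex $E$ emerges from balancing these cases (see \eqref{eq:4.14}--\eqref{eq:4.15}); the hypothesis on $\alpha/(1-\alpha)$ is exactly what makes \eqref{eq:4.13} hold so that the balancing is consistent.

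A second point you miss entirely is the stationary point. Since $|\gamma'|$ is strictly increasing from $0$ to $\infty$ (by \eqref{eq:1.1}), there is a unique $t_{0}$ with $\gamma'(t_{0})=1$, and the Jacobian $|1-\gamma'(t)|$ degenerates there. The paper isolates the critical scales $j=j_{0},j_{0}+1,j_{0}+2$ (Proposition~\ref{proposition 3.2}) and treats them by a secondary dyadic decomposition around $t_{0}$, using $|1-\gamma'(t)|=|\gamma''(\vartheta)|\,|t-t_{0}|\gtrsim 2^{-k}$ on each shell. \emph{This}, not an oscillatory gain, is the role of the $\gamma''$ lower bound in \eqref{eq:1.1}.

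Finally, the paper does not obtain restricted weak-type exactly at $E$ and $F$; Propositions~\ref{proposition 4.1} and~\ref{proposition 5.2} yield $L^{1}\times L^{2r(\varepsilon)}\to L^{r(\varepsilon),\infty}$ (and symmetrically) with $1/r(\varepsilon)$ arbitrarily close to $2(\omega_{\infty,2}-\alpha)/(2\omega_{\infty,2}-\omega_{\infty,1})$. Since only the open convex hull is claimed, this $\varepsilon$-loss is absorbed by interpolation, but it does mean the endpoint behaviour at $E,F$ themselves is not asserted.
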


The second main result of this paper, i.e., Theorem \ref{maintheorem2}, will study the analogue of Theorem \ref{maintheorem} for the case of $\frac{\alpha}{1-\alpha}\geq \omega_{\infty,1}$. We note that Theorem \ref{maintheorem2} may also be viewed as a necessary supplement corresponding to Theorem \ref{maintheorem}, since the condition $\frac{\omega_{\infty,2}-\omega_{\infty,1}}{\omega_{\infty,2}-1}<\frac{\alpha}{1-\alpha}<\frac{\omega_{\infty,2}}{\omega_{\infty,2}-\omega_{\infty,1}+1}$ in Theorem \ref{maintheorem} is equivalent to $\frac{\alpha}{1-\alpha}<\omega_{\infty,1}$ when $\omega_{\infty,1}=\omega_{\infty,2}$.

\begin{theorem}\label{maintheorem2}
Assume that $\gamma$ satisfies {\bf(H.)} and $\frac{\alpha}{1-\alpha}\geq \omega_{\infty,1}$. Then, there exists a positive constant $C$, such that
$$\left\|H_{\alpha,\gamma}(f,g)\right\|_{L^{r}(\mathbb{R})}\leq C \|f\|_{L^{p}(\mathbb{R})}\|g\|_{L^{q}(\mathbb{R})}$$
holds for all $f\in L^{p}(\mathbb{R})$ and $g\in L^{q}(\mathbb{R})$, where $(\frac{1}{p},\frac{1}{q},\frac{1}{r})$ lies in the open convex hull of the following points:
\begin{align*}
&~~~A\left(\alpha,0,0\right),&&B\left(1,0,1-\alpha\right),\\
&~~~C_1\left(0,\frac{\alpha}{\omega_{\infty,1}},0\right),&&C_2\left(0,\frac{\alpha}{\omega_{0,2}},0\right),\\
&~~~D_1\left(0,1,1-\frac{\alpha}{\omega_{\infty,1}}\right),&&D_2\left(0,1,1-\frac{\alpha}{\omega_{0,2}}\right),\\
&\begin{cases}
G_1\left(1, \frac{1-\alpha}{2-\omega_{\infty,1}}, \frac{2(1-\alpha)}{2-\omega_{\infty,1}}\right),&\omega_{\infty,1}\leq2\alpha;\\
G^1_2\left(1, \frac{\alpha}{\omega_{\infty,1}}, 1\right),~G^2_2\left(1, \frac{1}{2}, 1\right),&2\alpha<\omega_{\infty,1},
\end{cases},&&H\left(\frac{\omega_{\infty,1}-\alpha}{\omega_{\infty,1}}, 1, \frac{2(\omega_{\infty,1}-\alpha)}{\omega_{\infty,1}}\right).
\end{align*}
\end{theorem}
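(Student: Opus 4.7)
My plan is to establish restricted weak-type estimates at each vertex, i.e. to show that $H_{\alpha,\gamma}(\chi_E,\chi_F)\in L^{r,\infty}(\mathbb{R})$ with norm bounded by a constant times $|E|^{1/p}|F|^{1/q}$ at every vertex $(1/p,1/q,1/r)$ listed, and then to invoke bilinear real interpolation (of Grafakos--Kalton type) to conclude the strong-type bounds on the open convex hull. The vertices $A$, $B$, $C_1$, $C_2$, $D_1$, $D_2$ are common with Theorem \ref{maintheorem} and are treated in the same way: $A$ follows from $|H_{\alpha,\gamma}(f,g)(x)|\leq \|g\|_\infty\, I_\alpha(|f|)(x)$ and the classical endpoint for the Riesz potential; $B$ is a Fubini/Young computation; $C_1$ and $C_2$ follow from $|H_{\alpha,\gamma}(f,g)(x)|\leq \|f\|_\infty\, I_{\alpha,\gamma}(|g|)(x)$ together with the almost-sharp Hardy--Littlewood--Sobolev inequality for $I_{\alpha,\gamma}$ established elsewhere in this paper (one vertex coming from the near-infinity behavior governed by $\omega_{\infty,1}$, the other from the near-zero behavior governed by $\omega_{0,2}$); and $D_1$, $D_2$ are their duals.

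The new content is in the bilinear vertices $G_1$, $G_2^1$, $G_2^2$, $H$, which appear precisely because of the hypothesis $\alpha/(1-\alpha)\geq \omega_{\infty,1}$. I would first decompose dyadically,
\begin{align*}
H_{\alpha,\gamma}(f,g)(x)=\sum_{k\in\mathbb{Z}} H^k_{\alpha,\gamma}(f,g)(x),\qquad H^k_{\alpha,\gamma}(f,g)(x):=\int_{2^k}^{2^{k+1}} f(x-t)\,g(x-\gamma(t))\,\frac{dt}{t^{1-\alpha}},
\end{align*}
and prove two competing per-scale bounds: a trivial size estimate that gains a factor $2^{k\alpha}$ and an $L^\infty$ factor on $f$ or $g$, and a finer bilinear $L^2$/$TT^{*}$-type estimate obtained after the rescaling $t\mapsto 2^k s$, which exploits the non-degeneracy $|t^2\gamma''(t)/\gamma(t)|\geq C_1^{(2)}$ from \textbf{(H.)} on each shell together with the bilipschitz character of $t\mapsto\gamma(t)$ implied by $C_1^{(1)}\leq |t\gamma'(t)/\gamma(t)|\leq C_2^{(1)}$. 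Geometric interpolation between these two per-scale bounds gives $\|H^k_{\alpha,\gamma}(f,g)\|_r \lesssim 2^{k\theta(p,q,r)}\|f\|_p\|g\|_q$ with an exponent depending on $(1/p,1/q,1/r)$; summing over $k\geq 0$ using $|\gamma(t)|\asymp t^{\omega_{\infty,1}}$ and over $k<0$ using the zero-asymptotics then picks out exactly the admissible vertices. The case split at $\omega_{\infty,1}=2\alpha$ encoding whether $G$ is a single vertex $G_1$ or the broken pair $G_2^1$, $G_2^2$ reflects whether, on large scales, the bilinear $TT^*$ gain dominates or is dominated by the scaling loss $2^{k\alpha}$, and parallels the way $E$ and $F$ were generated in Theorem \ref{maintheorem}.

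The main obstacle I anticipate is handling the endpoints $G$ and $H$ with no logarithmic loss in the dyadic summation. There the curvature constants in \textbf{(H.)} must enter uniformly in $k$, and one must exactly saturate the per-scale bilinear estimate, using both the monotonicity of $\gamma$ and the two-sided control of $t\gamma'(t)/\gamma(t)$ to justify a scale-invariant change of variables $t\mapsto\gamma(t)$ on each dyadic shell. Once those restricted weak-type statements are in place at every vertex, the multilinear real interpolation theorem immediately produces the strong-type bounds on the open convex hull asserted in Theorem \ref{maintheorem2}.
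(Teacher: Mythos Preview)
Your high-level architecture---dyadic decomposition $H_{\alpha,\gamma}=\sum_j 2^{(\alpha-1)j}H_{\alpha,\gamma,j}$, competing per-scale bounds, interpolation, then summation---matches the paper. The treatment of the ``linear'' vertices $A,B,C_1,C_2,D_1,D_2$ is also essentially what the paper does (its Proposition~\ref{proposition 2.4}). However, two substantive points in your sketch diverge from the paper and, as written, constitute real gaps.

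First, the per-scale bilinear input is not a $TT^*$ or oscillatory-integral estimate. There is no oscillation here. The paper's workhorse is the change of variables $(x,t)\mapsto(u,v)=(x-t,\,x-\gamma(t))$ with Jacobian $1-\gamma'(t)$, which yields an $L^1\times L^1\to L^1$ bound on each dyadic shell with constant $|1-\gamma'(2^{j-1})|^{-1}$, and from this (after localizing to intervals of the right length) an $L^1\times L^1\to L^{1/2}$ bound with constant $\sim 2^j$. The second-derivative hypothesis in \textbf{(H.)} is \emph{not} used via van der Corput; it is used only to guarantee that $|\gamma'|$ is strictly increasing, so that there is at most one stationary point $t_0$ with $\gamma'(t_0)=1$, and to control the degeneration of the Jacobian near $t_0$ through $|\gamma'(t+t_0)-\gamma'(t_0)|=|\gamma''(\vartheta)|\,|t|$. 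This stationary-point analysis (the paper's Proposition~\ref{proposition 3.2}, handling the three critical shells $j=j_0,j_0+1,j_0+2$ by a further Littlewood--Paley decomposition in $|t-t_0|$) is the main technical hurdle and does not appear in your outline; a naive $TT^*$ sketch will not see it.

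Second, the paper does \emph{not} obtain restricted weak-type exactly at the vertices $G_1,G_2^1,G_2^2,H$. Propositions~\ref{proposition 4.2} and~\ref{proposition 5.2} prove genuine $L^1\times L^{q(\varepsilon)}\to L^{r(\varepsilon),\infty}$ and $L^{p(\varepsilon)}\times L^1\to L^{r(\varepsilon),\infty}$ bounds at points $\varepsilon$-close to those vertices (with $\varepsilon>0$ arbitrary), via a Chebyshev/level-set argument: one bounds $\lambda^{1/(1+\theta)}|E_\lambda|$ by summing the interpolated per-scale estimates over $j$, splitting according to whether $|E_\lambda|\gtrless 2^j|\gamma'(2^{j-1})|$ etc., and using Lemma~\ref{lemma 2.1} for the asymptotics of $|\gamma|$. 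The case split $\omega_{\infty,1}\lessgtr 2\alpha$ arises because the sign of $\alpha-\theta_1(\omega_{\infty,1}-\varepsilon)$ changes the direction of the geometric sum in the intermediate range, not because a $TT^*$ gain crosses the scaling loss. Since only nearby points are reached, the conclusion is precisely the open convex hull---you should not expect or claim endpoint restricted weak-type at $G$ and $H$ themselves.
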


\begin{remark}
Let us express the regions of $(\frac{1}{p}, \frac{1}{q}, \frac{1}{r})$ in Theorems \ref{maintheorem} and \ref{maintheorem2} as in Figures \ref{Figure:1} and \ref{Figure:2}, respectively.
\begin{figure}[htbp]
  \centering
  \includegraphics[width=4.5in]{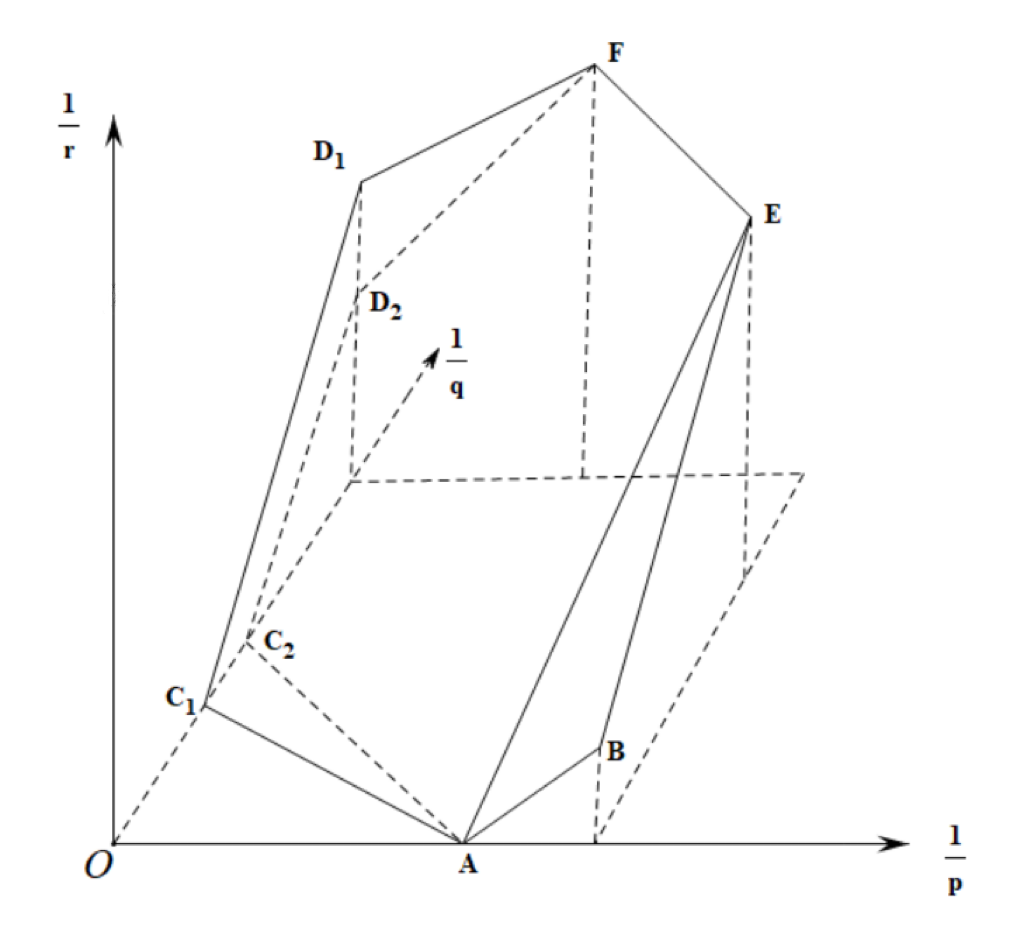}\\
  \caption{Regions of $(\frac{1}{p}, \frac{1}{q}, \frac{1}{r})$ in Theorem \ref{maintheorem}.}\label{Figure:1}
\end{figure}
\begin{figure}[htbp]
  \centering
  \includegraphics[width=5.68in]{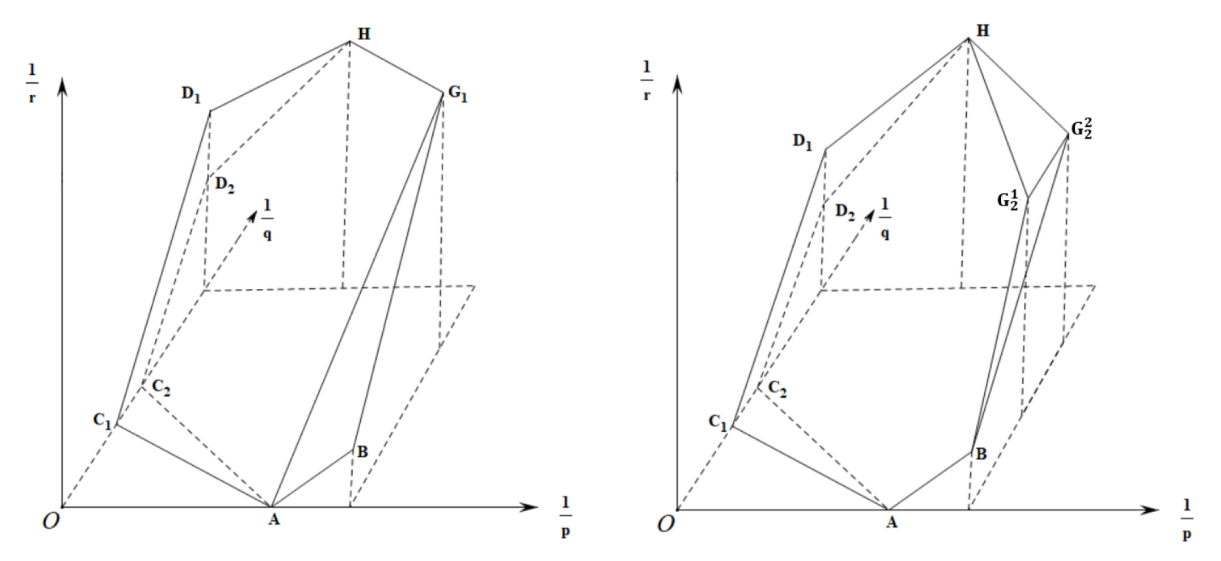}\\
  \caption{Regions of $(\frac{1}{p}, \frac{1}{q}, \frac{1}{r})$ in Theorem \ref{maintheorem2} with $\omega_{\infty,1}\leq2\alpha$ and $2\alpha<\omega_{\infty,1}$, respectively.}\label{Figure:2}
\end{figure}
\end{remark}

\begin{example}
Let us list some examples of curves.
\begin{enumerate}
\item[\rm(1)] $\gamma_1(t):=t^\beta$ with $\beta\in(1,\infty)$. Then the regions of $(\frac{1}{p}, \frac{1}{q}, \frac{1}{r})$ in the $L^p(\mathbb{R})\times L^q(\mathbb{R}) \rightarrow L^r(\mathbb{R})$ estimates for the corresponding $H_{\alpha,\gamma_1}$ lies in the open convex hull of the following points:
\begin{align*}
&~~~A\left(\alpha,0,0\right),~~~~~~~~~~~~~~~~~~~~~~~~~~~~~B\left(1,0,1-\alpha\right),\\
&~~~C\left(0,\frac{\alpha}{\beta},0\right),~~~~~~~~~~~~~~~~~~~~~~~~~~~D\left(0,1,1-\frac{\alpha}{\beta}\right),\\
&\begin{cases}
 E\left(1, 1-\frac{\alpha}{\beta}, 2(1-\frac{\alpha}{\beta})\right),~~~~~~~~~~F\left(1-\frac{\alpha}{\beta}, 1, 2(1-\frac{\alpha}{\beta})\right),&  if~ \frac{\alpha}{1-\alpha}<\beta;\\
 G_1\left(1, \frac{1-\alpha}{2-\beta}, \frac{2(1-\alpha)}{2-\beta}\right),~~~~~~~~~~~~~~~H\left(1-\frac{\alpha}{\beta}, 1, 2(1-\frac{\alpha}{\beta})\right),& if~ \frac{\alpha}{1-\alpha} \geq \beta~and~\beta \leq 2\alpha;\\
G^1_2\left(1, \frac{\alpha}{\beta}, 1\right),~~G^2_2\left(1, \frac{1}{2}, 1\right),~~~H\left(1-\frac{\alpha}{\beta}, 1, 2(1-\frac{\alpha}{\beta})\right),& if~ \frac{\alpha}{1-\alpha} \geq \beta ~and~2\alpha<\beta.
\end{cases}
\end{align*}
\item[\rm(2)] $\gamma_2(t):=\sum\limits_{i=1}^{k} \beta_i t^{\alpha_i}$, where $\beta_i\in \mathbb{R}\backslash\{0\} $ for all $i=1,2,\cdots,k$ and $1<\alpha_1<\alpha_2<\cdots<\alpha_k$ with $k\in \mathbb{N}$. Then the regions of $(\frac{1}{p}, \frac{1}{q}, \frac{1}{r})$ in the $L^p(\mathbb{R})\times L^q(\mathbb{R}) \rightarrow L^r(\mathbb{R})$ estimates for the corresponding $H_{\alpha,\gamma_2}$ lies in the open convex hull of the following points:
 \begin{align*}
&~~~A\left(\alpha,0,0\right),~~~~~~~~~~~~~~~~~~~~~~~~~~~~~~~B\left(1,0,1-\alpha\right),\\
&~~~C_1\left(0,\frac{\alpha}{\alpha_k},0\right),~~~~~~~~~~~~~~~~~~~~~~~~~~C_2\left(0,\frac{\alpha}{\alpha_1},0\right),\\
&~~~D_1\left(0,1,1-\frac{\alpha}{\alpha_k}\right),~~~~~~~~~~~~~~~~~~~D_2\left(0,1,1-\frac{\alpha}{\alpha_1}\right),\\
&\begin{cases}
 E\left(1, 1-\frac{\alpha}{\alpha_k}, 2(1-\frac{\alpha}{\alpha_k})\right),~~~~~~~~~F\left(1-\frac{\alpha}{\alpha_k}, 1, 2(1-\frac{\alpha}{\alpha_k})\right),& if~ \frac{\alpha}{1-\alpha}<\alpha_k;\\
 G_1\left(1, \frac{1-\alpha}{2-\alpha_k}, \frac{2(1-\alpha)}{2-\alpha_k}\right),~~~~~~~~~~~~~~~H\left(1-\frac{\alpha}{\alpha_k}, 1, 2(1-\frac{\alpha}{\alpha_k})\right),&  if~ \frac{\alpha}{1-\alpha} \geq \alpha_k~and~\alpha_k \leq 2\alpha;\\
G^1_2\left(1, \frac{\alpha}{\alpha_k}, 1\right),~~G^2_2\left(1, \frac{1}{2}, 1\right),~~~ H\left(1-\frac{\alpha}{\alpha_k}, 1, 2(1-\frac{\alpha}{\alpha_k})\right),& if~ \frac{\alpha}{1-\alpha} \geq \alpha_k ~and~2\alpha<\alpha_k.
\end{cases}
\end{align*}
\item[\rm(3)] $\gamma_3(t):=t^\beta\log(1+t)$, or $t^\beta \arctan t$, or $e^{\beta t}-\beta t-1$, where $\beta>0$. We note that all of these $\gamma_3$  satisfy $\gamma_3(0)=\gamma'_3(0)=\cdots=\gamma^{(d-1)}_3(0)=0$ and $\gamma^{(d)}_3(0)\neq0$, where $d\geq 2$ and $d\in\mathbb{N}$, which is called as finite type $d$ at $0$ in Iosevich \cite{Iose}. The regions of $(\frac{1}{p}, \frac{1}{q}, \frac{1}{r})$ in the $L^p(\mathbb{R})\times L^q(\mathbb{R}) \rightarrow L^r(\mathbb{R})$ estimates for the corresponding $H_{\alpha,\gamma_3}$ can be obtained as in above, we omit the details.
\end{enumerate}
\end{example}

The first motivation of our problem is the bilinear Hilbert transform $H_{\gamma}(f,g)$ along a curve $(t,\gamma(t))$, which is defined by
$$H_{\gamma}(f,g)(x):=\mathrm{p.\,v.}\int_{-\infty}^{\infty}f(x-t)g(x-\gamma(t))\,\frac{\textrm{d}t}{t}.$$
If $\gamma(t):=-t$, then this operator turns to be the standard bilinear Hilbert transform. As a landmark work, Lacey and Thiele \cite{LT1,LT2} set up its $L^p(\mathbb{R})\times L^q(\mathbb{R})\rightarrow L^r(\mathbb{R})$ boundedness for all $\frac{1}{p}+\frac{1}{q}=\frac{1}{r}$, $p, q>1$, and $r>\frac{2}{3}$. This boundedness confirms a conjecture raised by Calderon. The restriction $r>\frac{2}{3}$ follows from the boundedness of a model operator they used. So far, to extend the region to $\frac{1}{2} < r \leq \frac{2}{3}$ is still a challenging question. Related works can be found in \cite{GLi04,LiXC,LiLi22} and references therein.

On the other hand, the $L^p(\mathbb{R})\times L^q(\mathbb{R})\rightarrow L^r(\mathbb{R})$ boundedness of $H_{\gamma}(f,g)$ with some general curves $\gamma$ are of great interest to us. We start with a special case $\gamma(t):=t^d$, where $d\in \mathbb{N}$ and $d>1$, Li \cite{LiX} obtained the $L^2(\mathbb{R})\times L^2(\mathbb{R})\rightarrow L^1(\mathbb{R})$ boundedness for this operator. For $\gamma(t):=\textrm{P}(t)$, a polynomial with no linear term and constant term. Li and Xiao \cite{LX} set up the $L^p(\mathbb{R})\times L^q(\mathbb{R})\rightarrow L^r(\mathbb{R})$ boundedness of $H_{\gamma}(f,g)$ with $\frac{1}{p}+\frac{1}{q}=\frac{1}{r}$, $p>1$, $q>1$ and $r>\frac{d-1}{d}$. At the same time, they showed that $r>\frac{d-1}{d}$ is sharp up to the end point. Moreover, by replacing $\gamma$ with a homogeneous curve $t^d$, where $d\in \mathbb{N}$ and $d>1$, they also showed that the range of $r$ can be extended to $r>\frac{1}{2}$. Lie \cite{Lie1} considered a curve class $\mathcal{NF}^C$ and obtained the $L^2(\mathbb{R})\times L^2(\mathbb{R})\rightarrow L^1(\mathbb{R})$ boundedness of $H_{\gamma}(f,g)$ for $\gamma\in \mathcal{NF}^C$, and it was extended to the $L^p(\mathbb{R})\times L^q(\mathbb{R})\rightarrow L^r(\mathbb{R})$ boundedness with $r\geq 1$ in \cite{Lie2}. More recently, Guo and Xiao \cite{GX} also introduced a set $\textbf{F}(-1,1)$ and obtained the $L^2(\mathbb{R})\times L^2(\mathbb{R})\rightarrow L^1(\mathbb{R})$ boundedness of $H_{\gamma}(f,g)$ for $\gamma\in \textbf{F}(-1,1)$. Li and Yu \cite{LY3} obtained the $L^p(\mathbb{R})\times L^q(\mathbb{R})\rightarrow L^r(\mathbb{R})$ boundedness of $H_{\gamma}(f,g)$ with $r>\frac{1}{2}$ for some curves that are much easier to be
verified than $\mathcal{NF}^C$ and $\textbf{F}(-1,1)$. For some other related works, we refer the reader to \cite{CDR21,CG24,LiLi23}.

The second motivation of our problem comes from the Hilbert transform $T_{\gamma}$ along a curve $(t,\gamma(t))$, where
$$T_{\gamma}f(x_1,x_2):=\mathrm{p.\,v.}\int_{-\infty}^{\infty}f(x_1-t,x_2-\gamma(t))
\,\frac{\textrm{d}t}{t}.$$
This and related families have been extensively studied. The $L^p(\mathbb{R}^2)$ boundedness of $T_{\gamma}$ with $\gamma(t):=t^2$ arisen naturally within the study of the constant coefficient parabolic  differential operators initiated by Jones \cite{J}. Later, it was extended to more general classes of curves; see for instance \cite{SW,NVWW,CCCD,CNVWW,CCVWW,CVWW} and the references therein. There are some results which extend the aforementioned results to variable coefficient settings; see, for example, \cite{CWW,BJ,GHLR,DGTZ,LiYu21,LiYu22,LYu}.

On the other hand, we can define the fractional integral $ T_{\alpha,\gamma}$ along a curve $(t,\gamma(t))$ as
\begin{align*}
 T_{\alpha,\gamma}f(x_1,x_2):=\int_{0}^{\infty}f(x_1-t,x_2-\gamma(t))\,\frac{\textrm{d}t}{t^{1-\alpha}},
 \end{align*}
where $0<\alpha<1$. The study of the boundedness properties of $T_{\alpha,\gamma}$ first appeared in the work of Ricci and Stein \cite{RStein}. The discrete operator modeled on the fractional integral was proved by Stein and Wainger \cite{SteW00}. Moreover, Pierce \cite{Pierce} considered the discrete analog of a fractional integral operator on the Heisenberg group. For higher dimensional case, we refer the reader to Secco \cite{Secco} and Gressman \cite{Gressman}. For more related works, see \cite{Grafa93,SeWain}.

We now want to mention several ingredients in the proofs of our Theorems \ref{maintheorem} and \ref{maintheorem2}. The basic idea in our proofs is the interpolation argument. To treat the case when $p=\infty$ or $q=\infty$, i.e., the points: $A, B, C_1, C_2, D_1$ and $D_2$, we will obtain some restricted weak type estimates for $H_{\alpha,\gamma}$. The main difficulty is to prove the Hardy-Littlewood-Sobolev inequality, i.e., the $L^p(\mathbb{R})\rightarrow L^q(\mathbb{R})$ estimate for the corresponding fractional integral operator $I_{\alpha,\gamma}$. We here use a property of $\gamma$ and find the desired estimates depend on the indices $\omega_{0,1}, \omega_{0,2}, \omega_{\infty,1}$ and $\omega_{\infty,2}$ in \textbf{(H.)}, see Lemma \ref{lemma 2.2}. For the case when $p=1$ or $q=1$, i.e., the points: $E, F, G_1, G_2^1, G_2^2$ and $H$, we shall consider the non-critical case and critical case in establishing a uniform estimate for $H_{\alpha,\gamma,j}$, which comes from a decomposition of $H_{\alpha,\gamma}$. Moreover, to make $\frac{1}{r}$ or $\frac{1}{q}$ tends to maximal value when $p=1$, we need some complicated calculations by considering all possibilities. Argue similarly when $q=1$, but we here need to bound \eqref{eq:5.a} by \eqref{eq:5.b} and \eqref{eq:5.000} respectively, which is the most technical parts of the proofs with $q=1$. The whole purpose of bounding \eqref{eq:5.a} is to find a certain balance between the following two cases $|E_\lambda|\geq2^{j_0}|\gamma'(2^{j_{0}-1})|$ and $|E_\lambda|<2^{j_0}|\gamma'(2^{j_{0}-1})|$. It is also worth noticing that the stationary point of $H_{\alpha,\gamma}$ will leads to a few technical difficulties, which does not occur in considering $H_{\alpha}$ defined in \eqref{eq:1.00}, see \cite{KStein,Grafk,Graf92}.

The structure of the paper is as follows:
\begin{enumerate}
  \item[$\ast$] In Section 2, we get some restricted weak type estimates for $H_{\alpha,\gamma}$ at $p=\infty$ or $q=\infty$ based on an estimate for the fractional integral operator $I_{\alpha,\gamma}$.

  \item[$\ast$] In Section 3, we devote to establish a uniform estimate for $H_{\alpha,\gamma,j}$ by considering two cases: non-critical case and critical case, see Propositions \ref{proposition 3.1} and \ref{proposition 3.2} respectively.

  \item[$\ast$] In Section 4, we obtain some $L^p(\mathbb{R})\times L^q(\mathbb{R}) \rightarrow L^{r,\infty}(\mathbb{R})$ estimates for $H_{\alpha,\gamma}$ with $p=1$. The estimate near the point $E$ in Theorem \ref{maintheorem} can be found in Proposition \ref{proposition 4.1} and the estimate near the points $G_1$, $G_2^1$ and $G_2^2$ in Theorem \ref{maintheorem2} can be found in Proposition \ref{proposition 4.2}, where $\frac{1}{q}$ or $\frac{1}{r}$ tends to maximal value in our proofs.

  \item[$\ast$] In Sections 5, we obtain some $L^p(\mathbb{R})\times L^q(\mathbb{R}) \rightarrow L^{r,\infty}(\mathbb{R})$ estimates for $H_{\alpha,\gamma}$ with $q=1$. Proposition \ref{proposition 5.1} shows the estimate near the point $F$ in Theorem \ref{maintheorem} and Proposition \ref{proposition 5.2} is devote to the estimate near the point $H$ in Theorem \ref{maintheorem2}. We note that $\frac{1}{p}$ or $\frac{1}{r}$ also tends to maximal value in our proofs.
\end{enumerate}

Throughout this paper, we always use $C$ to denote a positive constant, which is independent of the main parameters,
but it may vary from line to line. Moreover, we use $C_{\varepsilon}$ to denote a positive constant depending on the indicated
parameter $\varepsilon$. Furthermore, we will use the standard notation $a\ls b$ (or $a\gs b$) to mean that there exist a positive constant $C$ such that $a\le Cb$ (or $a\ge Cb$). The expression $a\approx b$ means $a\ls b$ and $b\ls a$. $\hat{f}$ means the Fourier transform of $f$. For any set $E$, we use $\chi_E$ to denote the {characteristic function} of $E$, and $|E|$ shall denote the measure of $E$. Let $\mathbb{R}^+:=(0,\infty)$, and we use $\infty$ to denote $+\infty$. For any $s\in \mathbb{R}$, we denote by $\lceil s\rceil$ the smallest integer greater than $s$.

\section{Some restricted weak type estimates for $H_{\alpha,\gamma}$ at $p=\infty$ or $q=\infty$}

In this section, the aim is to prove some restricted weak type estimates for $H_{\alpha,\gamma}$ at particula endpoints $p=\infty$ or $q=\infty$. We commence by considering the fractional integral operator $I_{\alpha,\gamma}$ along the curve $(t,\gamma(t))$, where
\begin{align}\label{eq:2.4}
I_{\alpha,\gamma}f(x):=\int_{0}^{\infty}\left|f(x-\gamma(t))\right|\,\frac{\textrm{d}t}{t^{1-\alpha}}
 \end{align}
with $0<\alpha<1$, which is a convolution type operator.

\subsection {An estimate for the fractional integral operator $I_{\alpha,\gamma}$}

Let us begin by introducing the following lemma.

\begin{lemma}\label{lemma 2.1}
Assume that $\gamma\in C^{1}(\mathbb{R}^+)$ is monotonic on $\mathbb{R}^+$, $\lim\limits_{t\rightarrow 0^+}\gamma(t)=0$ and satisfies that
\begin{align}\label{eq:2.1}
there~ exist~ positive ~constants~ C^{(1)}_1~ and~ C^{(1)}_2 ~such~ that~ C^{(1)}_1\leq\left|\frac{t\gamma'(t)}{\gamma(t)}\right|\leq C^{(1)}_2~ on~ \mathbb{R}^+.
\end{align}
Then, for any $\varepsilon>0$, there exist positive constants $t_{1,\varepsilon}\in(0,1)$ and $t_{2,\varepsilon}\in(1,\infty)$, such that
$$\begin{cases}
  |\gamma(t)|\in(t^{\omega_{0,2}+\varepsilon},t^{\omega_{0,1}-\varepsilon})~ &\textrm{for}~ \textrm{all} ~t\in(0,t_{1,\varepsilon}); \\
  |\gamma(t)|\in(t^{\omega_{\infty,1}-\varepsilon},t^{\omega_{\infty,2}+\varepsilon})~ &\textrm{for}~ \textrm{all} ~t\in(t_{2,\varepsilon},\infty),
 \end{cases}$$
where the definitions of $\omega_{0,1}$, $\omega_{0,2}$, $\omega_{\infty,1}$ and $\omega_{\infty,2}$ can be found in {\bf(H.)}.
\end{lemma}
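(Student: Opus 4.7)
The plan is to reduce the statement to the elementary definitions of $\liminf$ and $\limsup$ after a logarithmic change of variables. First, since $\gamma\in C^1(\mathbb{R}^+)$ is monotonic with $\lim_{t\to 0^+}\gamma(t)=0$, the function $\gamma$ keeps a constant sign on $\mathbb{R}^+$ and never vanishes there, so $\ln|\gamma(t)|$ is well-defined and smooth. Setting $\tau:=\ln t$ and $\phi(\tau):=\ln|\gamma(e^{\tau})|$, the chain rule gives $\phi'(\tau)=(t\gamma'(t)/\gamma(t))|_{t=e^{\tau}}$; monotonicity of $\gamma$ forces $\gamma$ and $\gamma'$ to share a sign on $\mathbb{R}^+$, so in fact $\phi'(\tau)=|t\gamma'(t)/\gamma(t)|$, which by \eqref{eq:2.1} lies in $[C^{(1)}_1,C^{(1)}_2]$ uniformly in $\tau\in\mathbb{R}$.

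Integrating this two-sided bound on $\phi'$ shows that for $|\tau|$ large $\phi(\tau)/\tau$ is trapped in $[C^{(1)}_1,C^{(1)}_2]$ up to an $O(1/|\tau|)$ error, so all four quantities $\omega_{0,1},\omega_{0,2},\omega_{\infty,1},\omega_{\infty,2}$ lie in $[C^{(1)}_1,C^{(1)}_2]$; in particular they are finite, which allows us to treat $\omega_{0,1}-\varepsilon$ and so on as genuine real numbers. The definitions of $\liminf$ and $\limsup$ applied to $\phi(\tau)/\tau$ as $\tau\to-\infty$ and $\tau\to+\infty$ then produce, for each $\varepsilon>0$, thresholds $\tau^{-}_{\varepsilon}<0$ and $\tau^{+}_{\varepsilon}>0$ such that
$$\omega_{0,1}-\varepsilon<\frac{\phi(\tau)}{\tau}<\omega_{0,2}+\varepsilon\quad\text{for all }\tau<\tau^{-}_{\varepsilon},$$
$$\omega_{\infty,1}-\varepsilon<\frac{\phi(\tau)}{\tau}<\omega_{\infty,2}+\varepsilon\quad\text{for all }\tau>\tau^{+}_{\varepsilon}.$$

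To conclude, I would convert each pair of bounds back to the $t$-variable. For $\tau<0$ (i.e.\ $t<1$), multiplying by the negative quantity $\tau$ reverses the inequalities, so the first display becomes $(\omega_{0,2}+\varepsilon)\tau<\phi(\tau)<(\omega_{0,1}-\varepsilon)\tau$; exponentiating yields $t^{\omega_{0,2}+\varepsilon}<|\gamma(t)|<t^{\omega_{0,1}-\varepsilon}$ for all $0<t<t_{1,\varepsilon}:=e^{\tau^{-}_{\varepsilon}}\in(0,1)$. For $\tau>0$ (i.e.\ $t>1$), multiplication preserves the inequalities, and exponentiating directly gives $t^{\omega_{\infty,1}-\varepsilon}<|\gamma(t)|<t^{\omega_{\infty,2}+\varepsilon}$ for all $t>t_{2,\varepsilon}:=e^{\tau^{+}_{\varepsilon}}\in(1,\infty)$. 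The only piece of bookkeeping that requires care is the sign reversal for $\tau<0$, together with the observation that \eqref{eq:2.1} is what renders the four $\omega$-quantities finite; beyond these points the argument is immediate from the definitions and no substantial obstacle is expected.
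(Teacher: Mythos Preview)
Your proof is correct and follows essentially the same route as the paper: both arguments establish finiteness of the four $\omega$-quantities from the two-sided bound \eqref{eq:2.1}, invoke the definitions of $\liminf$/$\limsup$ applied to $\ln|\gamma(t)|/\ln t$ to obtain the two-sided inequality on this ratio near $0$ and near $\infty$, and then exponentiate. Your logarithmic substitution $\tau=\ln t$, $\phi(\tau)=\ln|\gamma(e^\tau)|$ simply makes explicit what the paper writes as $|\gamma(t)|=t^{\ln|\gamma(t)|/\ln t}$; the only cosmetic difference is that you obtain finiteness by directly integrating the bounded derivative $\phi'$, whereas the paper cites a doubling estimate and l'H\^{o}pital's rule.
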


\begin{proof}
We first show $\omega_{0,1}, \omega_{0,2}, \omega_{\infty,1}, \omega_{\infty,2}\in (0,\infty)$. Indeed, it follows from Liu, Song and Yu \cite[Remark 1.4]{LSY} that
\begin{align}\label{eq:2.2}
e^{\frac{C^{(1)}_{1}}{2}}\leq \frac{|\gamma(2t)|}{|\gamma(t)|}\leq e^{C^{(1)}_{2}}
  \end{align}
for all $t\in \mathbb{R}^+$, we then have $\lim\limits_{t\rightarrow \infty}|\gamma(t)|=\infty$. This, combined with $\lim\limits_{t\rightarrow 0^+}\gamma(t)=0$, by l'H\^{o}pital's Rule and \eqref{eq:2.1}, leads to
\begin{align*}
\frac{C^{(1)}_1}{2}\leq{\frac{\ln{|\gamma(t)|}}{\ln t}}\leq 2C^{(1)}_2
  \end{align*}
holds for all $t\in (0, A)\cup (A^{-1}, \infty)$, where $A$ is a positive constant. Furthermore, from the supremum and infimum principle, we can find that $\omega_{0,1}, \omega_{0,2}, \omega_{\infty,1}, \omega_{\infty,2}\in [\frac{C^{(1)}_1}{2}, 2C^{(1)}_2] \subset(0,\infty)$.

From the definitions of $\omega_{0,1}$ and $\omega_{0,2}$, we see that $$\omega_{0,1}=\sup\limits_{\tau\in (0, 1)}\inf\limits_{t\in (0, \tau)}\frac{\ln|\gamma(t)|}{\ln t} ~~\textrm{and} ~~\omega_{0,2}=\inf\limits_{\tau\in (0,1)}\sup\limits_{t\in (0,\tau)}\frac{\ln|\gamma(t)|}{\ln t}.$$ Then, for any $\varepsilon>0$, there exists a positive constant $t_{1,\varepsilon}\in(0,1)$ such that $$\inf\limits_{t\in (0, t_{1,\varepsilon})}\frac{\ln|\gamma(t)|}{\ln t}\in(\omega_{0,1}-\varepsilon, \omega_{0,1}]~~ \textrm{and} ~~\sup_{t\in (0, t_{1,\varepsilon})}\frac{\ln|\gamma(t)|}{\ln t}\in [\omega_{0,2}, \omega_{0,2}+\varepsilon),$$ which implies that $$\omega_{0,1}-\varepsilon<\frac{\ln|\gamma(t)|}{\ln t}<\omega_{0,2}+\varepsilon$$ for all $t\in (0, t_{1,\varepsilon})$. Therefore,
\begin{align*}
|\gamma(t)|=t^{\log_t {|\gamma(t)|}}=t^{\frac{\ln |\gamma(t)|}{\ln t}}\in(t^{\omega_{0,2}+\varepsilon},t^{\omega_{0,1}-\varepsilon})
\end{align*}
for all $t\in (0, t_{1,\varepsilon})$. Similarly, for the same $\varepsilon$, there exists a positive constant $t_{2,\varepsilon}\in(1,\infty)$ such that $$\omega_{\infty,1}-\varepsilon<\frac{\ln|\gamma(t)|}{\ln t}<\omega_{\infty,2}+\varepsilon$$ for all $t\in (t_{2,\varepsilon}, \infty)$, which leads to
\begin{align*}
|\gamma(t)|\in(t^{\omega_{\infty,1}-\varepsilon},t^{\omega_{\infty,2}+\varepsilon})
\end{align*}
for all $t\in (t_{2,\varepsilon}, \infty)$. Therefore, we finish the proof of Lemma \ref{lemma 2.1}.
\end{proof}

Based on this Lemma \ref{lemma 2.1} we are invited to establish an almost sharp $L^p(\mathbb{R})\rightarrow L^q(\mathbb{R})$ estimate for $I_{\alpha,\gamma}$ defined in \eqref{eq:2.4}.

\begin{lemma}\label{lemma 2.2}
Let $\gamma $ be the same as in Lemma \ref{lemma 2.1} with $\omega_{0,2}\leq\omega_{\infty,1}$, and $1<p<q<\infty$ satisfy
\begin{align*}
\frac{1}{p}-\frac{1}{q}=\frac{\alpha}{M} ~~~~~~with~ M\in (\omega_{0,2}, \omega_{\infty,1}).
\end{align*}
Then\footnote{If $\omega_{0,2}=\omega_{\infty,1}$, an alternative result can be found in the following Remark \ref{remark 2.3}.}, there exists a positive constant $C$, such that for all $f\in L^p(\mathbb{R})$,
\begin{align}\label{eq:2.5}
\left\|I_{\alpha,\gamma}f\right\|_{L^{q}(\mathbb{R})} \leq C \|f\|_{L^{p}(\mathbb{R})}.
\end{align}
On the other hand, suppose that the estimate \eqref{eq:2.5} holds. Then $p$ and $q$ satisfy
\begin{align*}
\frac{1}{p}-\frac{1}{q}=\frac{\alpha}{N}~~~~~~with~N\in [\omega_{0,1}, \omega_{\infty,2}].
\end{align*}
\end{lemma}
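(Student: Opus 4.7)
The plan is to reduce $I_{\alpha,\gamma}$ to a classical convolution on $\mathbb{R}$ and close the sufficient direction with Young's inequality, and to extract the necessary condition by testing \eqref{eq:2.5} against characteristic functions at extreme scales. Since $\gamma\in C^{1}(\mathbb{R}^{+})$ is monotonic with $\gamma(0^{+})=0$ and $|\gamma(t)|\to\infty$ as $t\to\infty$ (by \eqref{eq:2.2}), the substitution $s=\gamma(t)$ is a bijection from $\mathbb{R}^{+}$ onto $\gamma(\mathbb{R}^{+})$ and transforms $I_{\alpha,\gamma}f$ into the convolution $|f|*K$, where
\[
K(s)=\frac{(\gamma^{-1}(s))^{\alpha-1}}{|\gamma'(\gamma^{-1}(s))|}\,\chi_{\gamma(\mathbb{R}^{+})}(s)\,\lesssim\,\frac{(\gamma^{-1}(s))^{\alpha}}{|s|},
\]
where the last estimate uses $|\gamma'(t)|\asymp|\gamma(t)|/t$, a consequence of \eqref{eq:2.1}.

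For the sufficient direction, I would pick $\varepsilon>0$ small enough that $\omega_{0,2}+\varepsilon<M<\omega_{\infty,1}-\varepsilon$, which is possible since $M\in(\omega_{0,2},\omega_{\infty,1})$. Lemma \ref{lemma 2.1} then gives $\gamma^{-1}(s)\lesssim|s|^{1/(\omega_{0,2}+\varepsilon)}$ for small $|s|$ and $\gamma^{-1}(s)\lesssim|s|^{1/(\omega_{\infty,1}-\varepsilon)}$ for large $|s|$, so $K(s)\lesssim|s|^{\alpha/(\omega_{0,2}+\varepsilon)-1}$ near $0$, $K(s)\lesssim|s|^{\alpha/(\omega_{\infty,1}-\varepsilon)-1}$ near $\infty$, and $K$ stays bounded on the intermediate range since $\gamma'$ is bounded above and below there by \eqref{eq:2.1}. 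Setting $\tilde q:=M/(M-\alpha)$ so that $1/p+1/\tilde q=1+1/q$, the inequalities $\omega_{0,2}+\varepsilon<M$ and $M<\omega_{\infty,1}-\varepsilon$ are algebraically equivalent to $\tilde q\bigl(1-\alpha/(\omega_{0,2}+\varepsilon)\bigr)<1$ and $\tilde q\bigl(1-\alpha/(\omega_{\infty,1}-\varepsilon)\bigr)>1$, which are precisely the conditions that $K\in L^{\tilde q}(\mathbb{R})$; Young's convolution inequality then yields \eqref{eq:2.5}. For the necessary direction, I would assume without loss of generality $\gamma>0$ and test with $f=\chi_{[-a,a]}$, $a>0$. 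For any $x\in[0,a]$ the inclusion $\gamma([0,\gamma^{-1}(a)])=[0,a]\subset[x-a,x+a]$ forces $I_{\alpha,\gamma}f(x)\ge(\gamma^{-1}(a))^{\alpha}/\alpha$, so \eqref{eq:2.5} gives $(\gamma^{-1}(a))^{\alpha}\lesssim a^{1/p-1/q}$. Feeding in the Lemma \ref{lemma 2.1} bounds $\gamma^{-1}(a)\gtrsim a^{1/(\omega_{0,1}-\varepsilon)}$ as $a\to 0^{+}$ and $\gamma^{-1}(a)\gtrsim a^{1/(\omega_{\infty,2}+\varepsilon)}$ as $a\to\infty$, and letting $\varepsilon\to 0^{+}$, yields $\alpha/\omega_{\infty,2}\le 1/p-1/q\le\alpha/\omega_{0,1}$, which is exactly $N\in[\omega_{0,1},\omega_{\infty,2}]$.

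The main obstacle is the balancing act in the sufficient direction: a single Young exponent $\tilde q$ must be \emph{strictly smaller} than $(\omega_{0,2}+\varepsilon)/(\omega_{0,2}+\varepsilon-\alpha)$ (so that $K$ is integrable to the power $\tilde q$ near the singularity at $0$) and \emph{strictly larger} than $(\omega_{\infty,1}-\varepsilon)/(\omega_{\infty,1}-\varepsilon-\alpha)$ (so that the tail at infinity is $\tilde q$-integrable). Both conditions algebraically collapse to the same constraint $\omega_{0,2}+\varepsilon<M<\omega_{\infty,1}-\varepsilon$, which is why the hypothesis $M\in(\omega_{0,2},\omega_{\infty,1})$ is sharp up to the arbitrarily small $\varepsilon$-slack, and why the necessary region $[\omega_{0,1},\omega_{\infty,2}]$ can properly contain the sufficient region $(\omega_{0,2},\omega_{\infty,1})$---hence the result is only ``almost sharp''.
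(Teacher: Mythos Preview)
Your argument is correct but follows a somewhat different line than the paper's. For sufficiency, the paper splits the $t$-integral at the thresholds $t_{1,\varepsilon}$ and $t_{2,\varepsilon}$ from Lemma~\ref{lemma 2.1}, handles the bounded middle piece by Young, and dominates each tail by a classical Riesz potential $\int |f(x-\omega)|\,\omega^{-(1-\alpha/M_\varepsilon)}\,d\omega$ with $M_\varepsilon\in[\omega_{0,2}+\varepsilon,\omega_{\infty,1}-\varepsilon]$, then invokes the Hardy--Littlewood--Sobolev inequality. You instead compute the convolution kernel $K$ globally and check $K\in L^{\tilde q}$ with $\tilde q=M/(M-\alpha)$, closing with Young; this is slightly more elementary since it avoids quoting HLS, and the two integrability constraints on $K$ line up exactly with the same $\omega_{0,2}+\varepsilon<M<\omega_{\infty,1}-\varepsilon$ window that the paper needs. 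For necessity, the paper runs a dilation argument $f_\delta(x)=f(\delta x)$ and extracts the exponent constraints from the scaling of $\|I_{\alpha,\gamma}f_\delta\|_{L^q}$; your direct test with $f=\chi_{[-a,a]}$ and the lower bound $I_{\alpha,\gamma}f(x)\ge\alpha^{-1}(\gamma^{-1}(a))^{\alpha}$ on $[0,a]$ is a concrete instance of the same idea and reaches the identical conclusions $\alpha/\omega_{\infty,2}\le 1/p-1/q\le\alpha/\omega_{0,1}$. Neither approach gains or loses generality; yours is a touch more self-contained, the paper's connects more visibly to the classical fractional integral.
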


\begin{proof} For any $\varepsilon>0$, using the positive constants $t_{1,\varepsilon}\in(0,1)$ and $t_{2,\varepsilon}\in(1,\infty)$ in Lemma \ref{lemma 2.1}, we decompose
\begin{align*}
I_{\alpha,\gamma}f(x)=\left(\int_{0}^{t_{1,\varepsilon}} + \int_{t_{1,\varepsilon}}^{t_{2,\varepsilon}} + \int_{t_{2,\varepsilon}}^{\infty} \right)\left|f(x-\gamma(t))\right|\,\frac{\textrm{d}t}{t^{1-\alpha}}=:I^{(1)}f(x)+I^{(2)}f(x)+I^{(3)}f(x).
\end{align*}
Without loss of generality, we can assume that $\gamma$ is increasing on $\mathbb{R}^+$. For $I^{(2)}f$, by a change of variables and Young's inequality, it is easy to see that there exists a positive constant $C_{\varepsilon}$ such that $$\left\|I^{(2)}f\right\|_{L^{q}(\mathbb{R})} \leq C_{\varepsilon} \|f\|_{L^{p}(\mathbb{R})}$$ holds for all $1\leq p\leq q\leq \infty$. Let $M_{\varepsilon}\in [\omega_{0,2}+\varepsilon, \omega_{\infty,1}-\varepsilon]$. For $I^{(1)}f$, by a changes of variables and Lemma \ref{lemma 2.1}, noticing the trivial fact that $(\gamma^{-1})'(t)\cdot\gamma'(\gamma^{-1}(t))=1$ and \eqref{eq:2.1}, we get the following estimate
\begin{align*}
I^{(1)}f(x)\leq \frac{1}{C^{(1)}_1} \int_{0}^{\gamma(t_{1,\varepsilon})}|f(x-\omega)|\cdot(\gamma^{-1}(\omega))^{\alpha}\,\frac{\textrm{d}\omega}{\omega} \leq \frac{1}{C^{(1)}_1} \int_{0}^{\gamma(t_{1,\varepsilon})}|f(x-\omega)|\,\frac{\textrm{d}\omega}{\omega^{1-{\frac{\alpha}{M_{\varepsilon}}}}}.
\end{align*}
Moreover, by the Hardy-Littlewood-Sobolev inequality in Stein \cite{S}, we can obtain that $$\left\|I^{(1)}f\right\|_{L^{q}(\mathbb{R})} \ls \|f\|_{L^{p}(\mathbb{R})}$$ for all $1<p<q<\infty$ satisfying $\frac{1}{p}-\frac{1}{q}=\frac{\alpha}{M_{\varepsilon}}$. Similarly, we can obtain the same estimate for $I^{(3)}f$. Since $\varepsilon$ was arbitrary, we deduce that \eqref{eq:2.5} holds for all $1<p<q<\infty$ satisfying $\frac{1}{p}-\frac{1}{q}=\frac{\alpha}{M}$.

On the other hand, suppose that the estimate \eqref{eq:2.5} holds, then ${\frac{1}{p}}-{\frac{1}{q}}={\frac{\alpha}{N}}$ with $N\in [{\omega_{0,1}},{\omega_{\infty,2}}]$. Actually, let the  dilation $f_{\delta}(x):=f(\delta x)$ with $\delta>0$, we see that $\|f_{\delta}\|_{L^{p}(\mathbb{R})}={\delta}^{-{\frac{1}{p}}}\|f\|_{L^{p}(\mathbb{R})}$. Moreover, by a change of variables and Lemma \ref{lemma 2.1}, it is clear that
\begin{align*}
I_{\alpha,\gamma}(f_{\delta})(x)&\gs \left(\int_{0}^{\delta t_{1,\varepsilon}}+ \int_{\delta t_{1,\varepsilon}}^{\delta t_{2,\varepsilon}}+ \int_{\delta t_{2,\varepsilon}}^{\infty}\right) |f(\delta x-\omega)|\left(\gamma^{-1}\left(\frac{\omega}{\delta}\right)\right)^{\alpha}\,\frac{\textrm{d}\omega}{\omega}  \\
&\gs \int_{0}^{\delta t_{1,\varepsilon}}|f(\delta x-\omega)|\left({{\frac{\omega}{\delta}}}\right)^{\frac{\alpha}{\omega_{0,1}-\varepsilon}}\,\frac{\textrm{d}\omega}{\omega}
  +\int_{\delta t_{2,\varepsilon}}^{+\infty}|f(\delta x-\omega)|\left({{\frac{\omega}{\delta}}}\right)^{\frac{\alpha}{\omega_{\infty,2}+\varepsilon}}\,\frac{\textrm{d}\omega}{\omega}.
\end{align*}
We then have
$$I_{\alpha,\gamma}(f_{\delta})(x)\gs\delta^{-\frac{\alpha}{\omega_{0,1}-\varepsilon}}\int_{0}^{\delta t_{1,\varepsilon}}|f(\delta x-\omega)|\,\frac{\textrm{d}\omega}{\omega^{1-{\frac{\alpha}{\omega_{0,1}-\varepsilon}}}},$$
which leads to
$$\left\|\int_{0}^{\delta t_{1,\varepsilon}}|f(\delta x-\omega)|\,\frac{\textrm{d}\omega}{\omega^{1-{\frac{\alpha}{\omega_{0,1}-\varepsilon}}}}\right\|_{L^{q}(\mathbb{R})}
  \ls\delta^{\frac{\alpha}{\omega_{0,1}-\varepsilon}}\|I_{\alpha,\gamma}(f_{\delta})\|_{L^{q}(\mathbb{R})}
  \ls\delta^{\frac{\alpha}{\omega_{0,1}-\varepsilon}}{\delta}^{-{\frac{1}{p}}}\|f\|_{L^{p}(\mathbb{R})}.$$
Furthermore, we claim that the following holds:
$$\left\|\int_{0}^{\delta t_{1,\varepsilon}}|f(x-\omega)|\,\frac{\textrm{d}\omega}{\omega^{1-{\frac{\alpha}{\omega_{0,1}-\varepsilon}}}}\right\|_{L^{q}(\mathbb{R})}
 \ls\delta^{\frac{\alpha}{\omega_{0,1}-\varepsilon}-\frac{1}{p}+ \frac{1}{q}}\|f\|_{L^{p}(\mathbb{R})}.$$
Let $\delta\rightarrow \infty$, then one has ${\frac{\alpha}{\omega_{0,1}-\varepsilon}}-{\frac{1}{p}}+{\frac{1}{q}}\geq0$. Otherwise, the right side is $0$, this yields a contradiction. Therefore, ${\frac{1}{p}}-{\frac{1}{q}}\leq{\frac{\alpha}{\omega_{0,1}-\varepsilon}}$. Since $\varepsilon$ was arbitrary, we conclude that $\frac{1}{p}-\frac{1}{q}\leq\frac{\alpha}{\omega_{0,1}}$.
Similarly, we also have
$$I_{\alpha,\gamma}(f_{\delta})(x)\geq\delta^{-\frac{\alpha}{\omega_{\infty,2}+\varepsilon}}\int_{\delta t_{2,\varepsilon}}^{+\infty}|f(\delta x-\omega)|\,\frac{\textrm{d}\omega}{\omega^{1-{\frac{\alpha}{\omega_{\infty,2}+\varepsilon}}}},$$
the above implies that $p$ and $q$ satisfying $\frac{1}{p}-\frac{1}{q}\geq\frac{\alpha}{\omega_{\infty,2}}$. Therefore, $p$ and $q$ satisfy
$\frac{1}{p}-\frac{1}{q}=\frac{\alpha}{N}$ with $N\in [\omega_{0,1}, \omega_{\infty,2}]$, this concludes the proof of Lemma \ref{lemma 2.2}.
\end{proof}

From the proof of Lemma \ref{lemma 2.2}, we have the following remark to show the equivalent condition of Lemma \ref{lemma 2.2} for a class of new curves.

\begin{remark}\label{remark 2.3}
Assume that $\gamma\in C^{1}(\mathbb{R}^+)$, $\lim\limits_{t\rightarrow 0^+}\gamma(t)=0$, and there exist positive constants $0<A\leq B<\infty$ and $0<a\leq b<\infty$ such that $\gamma(t)\approx t^{a}$, $t\in(0,A)$ and $\gamma(t)\approx t^{b}$, $t\in(B,\infty)$, which\footnote{We note that $\gamma$ can be taken as a polynomial with no constant term. In fact, by a translation, a general polynomial is also suitable.} implies that  $\omega_{0,1}=\omega_{0,2}=a$ and $\omega_{\infty,1}=\omega_{\infty,2}=b$. Then, there exists a positive constant $C$ such that for all $f\in L^{p}(\mathbb{R})$, the estimate
\begin{align*}
\left\|I_{\alpha,\gamma}f\right\|_{L^{q}(\mathbb{R})}\leq C\|f\|_{L^{p}(\mathbb{R})}
\end{align*}
holds, if and only if $p$ and $q$ satisfy
$${\frac{1}{p}}-{\frac{1}{q}}\in \left[{\frac{\alpha}{b}},{\frac{\alpha}{a}}\right].$$
In addition, if $a>b$, then $I_{\alpha,\gamma}$ has no the corresponding $L^{p}(\mathbb{R})\rightarrow L^{q}(\mathbb{R})$ boundedness.
\end{remark}

\subsection {Some restricted weak type estimates for $H_{\alpha,\gamma}$}

We now establish some restricted weak type estimates for $H_{\alpha,\gamma}$ in the following Proposition \ref{proposition 2.4}. The definition of the restricted weak type $(p,q,r)$ estimate can be found in Grafakos \cite{Graf}. A operator is of restricted weak type $(p,q,r)$, by density, which in fact implies the $L^p(\mathbb{R})\times L^q(\mathbb{R}) \rightarrow L^{r,\infty}(\mathbb{R})$ boundedness of this operator.

\begin{proposition}\label{proposition 2.4}
Let $\gamma $ be the same as in Lemma \ref{lemma 2.1} with $\alpha\leq\omega_{0,2}\leq  \omega_{\infty,1}$. Then $H_{\alpha,\gamma}(f,g)$  is of restricted weak types $({\frac{1}{\alpha}},\infty,\infty)$, $(1,\infty,{\frac{1}{1-\alpha}})$, $(\infty,{\frac{M}{\alpha}},\infty)$ and $(\infty,1,{\frac{M}{M-\alpha}})$ with $M\in (\omega_{0,2}, \omega_{\infty,1})$.
\end{proposition}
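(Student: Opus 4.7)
The plan is to treat the four endpoints in two pairs, depending on which argument of $H_{\alpha,\gamma}$ is in $L^{\infty}$. From the pointwise majorizations
\[
|H_{\alpha,\gamma}(f,g)(x)|\leq \|g\|_{L^{\infty}(\mathbb{R})}\,\widetilde{I}_{\alpha}(|f|)(x),\qquad
|H_{\alpha,\gamma}(f,g)(x)|\leq \|f\|_{L^{\infty}(\mathbb{R})}\,I_{\alpha,\gamma}(|g|)(x),
\]
where $\widetilde{I}_{\alpha}\phi(x):=\int_{0}^{\infty}\phi(x-t)\,t^{\alpha-1}\,\textrm{d}t$ denotes the standard one-sided Riesz potential of order $\alpha$, the pair $(\frac{1}{\alpha},\infty,\infty)$ and $(1,\infty,\frac{1}{1-\alpha})$ reduce to $\widetilde{I}_{\alpha}$-estimates, while $(\infty,\frac{M}{\alpha},\infty)$ and $(\infty,1,\frac{M}{M-\alpha})$ reduce to $I_{\alpha,\gamma}$-estimates.

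The first pair is essentially classical. Testing on $g=\chi_{F}$ with $\|g\|_{\infty}\leq 1$ and $f=\chi_{E}$, the worst case for $\widetilde{I}_{\alpha}(\chi_{E})(x)$ is the rearranged configuration $E=(x-|E|,x)$, which yields $\widetilde{I}_{\alpha}(\chi_{E})(x)\leq \alpha^{-1}|E|^{\alpha}$ uniformly in $x$ and hence the restricted weak type $(\frac{1}{\alpha},\infty,\infty)$ bound. The standard weak-type Hardy--Littlewood--Sobolev inequality for $\widetilde{I}_{\alpha}$ (see e.g.\ Stein \cite{S}) furnishes the restricted weak type $(1,\infty,\frac{1}{1-\alpha})$ bound at once.

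For the second pair I would reuse the splitting $I_{\alpha,\gamma}=I^{(1)}+I^{(2)}+I^{(3)}$ from the proof of Lemma \ref{lemma 2.2}. Changing variables by $u=\gamma(t)$, and using $(\gamma^{-1})'(u)=1/\gamma'(\gamma^{-1}(u))$ together with the two-sided bound \eqref{eq:2.1}, gives $\gamma'(\gamma^{-1}(u))\approx u/\gamma^{-1}(u)$; Lemma \ref{lemma 2.1} then upgrades $I^{(1)}$ and $I^{(3)}$ into truncated Riesz-potential-type operators of respective orders $\alpha/(\omega_{0,2}+\varepsilon)$ and $\alpha/(\omega_{\infty,1}-\varepsilon)$, while $I^{(2)}$ is a convolution with a bounded, compactly supported kernel. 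For the endpoint $(\infty,\frac{M}{\alpha},\infty)$, a rearrangement argument then produces
\[
\|I^{(1)}(\chi_{F})\|_{\infty}\ls|F|^{\alpha/(\omega_{0,2}+\varepsilon)},\quad
\|I^{(2)}(\chi_{F})\|_{\infty}\ls\min(C_{\varepsilon},|F|),\quad
\|I^{(3)}(\chi_{F})\|_{\infty}\ls\min(C_{\varepsilon}|F|,|F|^{\alpha/(\omega_{\infty,1}-\varepsilon)}),
\]
and, fixing $\varepsilon>0$ so small that $\omega_{0,2}+\varepsilon<M<\omega_{\infty,1}-\varepsilon$, a case split on whether $|F|\leq 1$ or $|F|\geq 1$ shows that each piece is dominated by $|F|^{\alpha/M}$. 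For the endpoint $(\infty,1,\frac{M}{M-\alpha})$, one instead combines the weak-type Hardy--Littlewood--Sobolev bounds $I^{(1)},I^{(3)}:L^{1}\to L^{q_{i},\infty}$ with $q_{1}=(\omega_{0,2}+\varepsilon)/(\omega_{0,2}+\varepsilon-\alpha)$ and $q_{3}=(\omega_{\infty,1}-\varepsilon)/(\omega_{\infty,1}-\varepsilon-\alpha)$, the trivial $L^{1}\to L^{1}$ bound on $I^{(2)}$, and the $L^{\infty}$ bound just derived to truncate small $\lambda$; a further case split on the size of $\lambda$ relative to $|F|$ then delivers the required distributional inequality.

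The main obstacle is the collation step in the second pair: the three summands of $I_{\alpha,\gamma}$ carry three different ``local exponents'', and the target exponent $M$ is only known to lie strictly between $\omega_{0,2}$ and $\omega_{\infty,1}$. Achieving a uniform bound in $|F|$ (respectively $\lambda$) therefore requires a careful interpolation between the regime in which the small-$t$ behaviour (captured by $\omega_{0,2}$) dominates and the regime in which the large-$t$ behaviour (captured by $\omega_{\infty,1}$) dominates, with the middle piece $I^{(2)}$ absorbed using that both $\alpha/(\omega_{0,2}+\varepsilon)$ and $\alpha/(\omega_{\infty,1}-\varepsilon)$ lie in $(0,1)$ thanks to the standing hypothesis $\alpha\leq\omega_{0,2}\leq\omega_{\infty,1}$. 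The freedom to choose $\varepsilon>0$ arbitrarily small in Lemma \ref{lemma 2.1} is ultimately what allows both outer exponents to be pushed across $M$, closing the argument.
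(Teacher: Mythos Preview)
Your approach is essentially the same as the paper's: both reduce via the pointwise majorizations to the linear operators $\widetilde{I}_{\alpha}$ and $I_{\alpha,\gamma}$, then invoke rearrangement for the $L^{\infty}$ endpoints and (weak-type) Hardy--Littlewood--Sobolev for the $L^{1}$ endpoints, using the $I^{(1)}+I^{(2)}+I^{(3)}$ decomposition from Lemma~\ref{lemma 2.2}.

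The paper is more economical in the second pair. Rather than tracking two different exponents for $I^{(1)}$ and $I^{(3)}$ and then performing a case split on $|F|$, it observes (already implicit in the proof of Lemma~\ref{lemma 2.2}) that for any single $M_{\varepsilon}\in[\omega_{0,2}+\varepsilon,\omega_{\infty,1}-\varepsilon]$ one has the pointwise bound $I^{(1)}+I^{(3)}\lesssim I_{\alpha/M_{\varepsilon},t}$, where the right-hand side is the standard one-sided Riesz potential of order $\alpha/M_{\varepsilon}$. With this the $(\infty,M/\alpha,\infty)$ bound is immediate from \eqref{eq:2.6}, and the $(\infty,1,M/(M-\alpha))$ bound follows directly from Lemma~\ref{lemma 2.2} (or, at the endpoint $p=1$, from weak-type HLS applied to $I_{\alpha/M_{\varepsilon},t}$). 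Your route works too but does more bookkeeping than necessary.

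One small gap to patch: in your displayed bound $\|I^{(1)}(\chi_{F})\|_{\infty}\lesssim|F|^{\alpha/(\omega_{0,2}+\varepsilon)}$, the exponent $\alpha/(\omega_{0,2}+\varepsilon)$ is \emph{larger} than $\alpha/M$, so for $|F|\geq 1$ this does not dominate $|F|^{\alpha/M}$. You need to also use that $I^{(1)}$ is a truncated integral over $(0,\gamma(t_{1,\varepsilon}))$, which gives $\|I^{(1)}(\chi_{F})\|_{\infty}\lesssim\min\bigl(|F|^{\alpha/(\omega_{0,2}+\varepsilon)},C_{\varepsilon}\bigr)$; the constant then handles the large-$|F|$ regime. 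With that correction your case split closes.
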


\begin{proof} We split the following four parts to finish our proofs:
\begin{enumerate}
  \item[$\star$] Consider the restricted weak type $({\frac{1}{\alpha}},\infty,\infty)$. Take $f=\chi_{A}$ and $g=\chi_{B}$, where $A, B\subset\mathbb{R}$ are measurable sets of finite measure, by the Exercises 1.4.1 in Grafakos \cite{Gra}, it implies
  \begin{align}\label{eq:2.6}
  \left\|H_{\alpha,\gamma}(\chi_{A},\chi_{B})\right\|_{L^{\infty}(\mathbb{R})}
  \lesssim \sup\limits_{x\in{\mathbb{R}}} \int_{x-A} |t|^{\alpha-1}\,\textrm{d}t \lesssim \int_{ |t|\lesssim |A| } |t|^{\alpha-1}\,\textrm{d}t \lesssim|A|^{\alpha}.
  \end{align}
  We then have that $H_{\alpha,\gamma}(f,g)$ is restricted weak type $({\frac{1}{\alpha}},\infty,\infty)$.

  \item[$\star$] Consider the restricted weak type $(1,\infty,{\frac{1}{1-\alpha}})$. From Lemma \ref{lemma 2.2} and Remark \ref{remark 2.3}, let us set $\gamma(t):=t$ in \eqref{eq:2.4}, we may obtain that
  \begin{align*}
\left\|I_{\alpha,t}f\right\|_{L^{q}(\mathbb{R})} \lesssim \|f\|_{L^{p}(\mathbb{R})}
  \end{align*}
holds, if and only if $p$ and $q$ satisfy
$${\frac{1}{p}}-{\frac{1}{q}}=\alpha,$$
where
\begin{align*}
I_{\alpha,t}f(x):=\int_{0}^{\infty}\left|f(x-t)\right|\,\frac{\textrm{d}t}{t^{1-\alpha}}.
 \end{align*}
Therefore,
\begin{align*}
  \left\|H_{\alpha,\gamma}(\chi_{A},\chi_{B})\right\|_{L^{\frac{1}{1-\alpha}}(\mathbb{R})}
  \lesssim \left\|I_{\alpha,t}\chi_{A}\right\|_{L^{\frac{1}{1-\alpha}}(\mathbb{R})}\lesssim  \left\|\chi_{A}\right\|_{L^{1}(\mathbb{R})} \lesssim|A|,
  \end{align*}
 which implies that $H_{\alpha,\gamma}(f,g)$ is restricted weak type $(1,\infty,{\frac{1}{1-\alpha}})$.

  \item[$\star$] Consider the restricted weak type $(\infty,{\frac{M}{\alpha}},\infty)$. It is easy to see that
  \begin{align}\label{eq:2.8}
  \left\|H_{\alpha,\gamma}(\chi_{A},\chi_{B})\right\|_{L^{\infty}(\mathbb{R})}
  \lesssim  \left\|I_{\alpha,\gamma}\chi_{B}\right\|_{L^{\infty}(\mathbb{R})} .
  \end{align}
  For any $\varepsilon>0$, as in Lemma \ref{lemma 2.2}, we can conclude that
  \begin{align}\label{eq:2.9}
  I_{\alpha,\gamma}\chi_{B}(x) \lesssim I_{\frac{\alpha}{M_{\varepsilon}},t}\chi_{B}(x) +I^{(2)}\chi_{B}(x),
  \end{align}
  where the definition of $I^{(2)}$ can be found in Lemma \ref{lemma 2.2}. Moreover, by \eqref{eq:2.6}, we obtain that $\|I_{\frac{\alpha}{M_{\varepsilon}},t}\chi_{B}\|_{L^{\infty}(\mathbb{R})}$ can be bounded from above by $|B|^{\frac{\alpha}{M_{\varepsilon}}}$, and the same estimate also holds for $I^{(2)}\chi_{B}$ since $\omega_{0,2}\geq \alpha$. Therefore, from \eqref{eq:2.8} and \eqref{eq:2.9}, it follows that
  \begin{align*}
  \left\|H_{\alpha,\gamma}(\chi_{A},\chi_{B})\right\|_{L^{\infty}(\mathbb{R})}
  \lesssim  |B|^{\frac{\alpha}{M_{\varepsilon}}}.
  \end{align*}
  Since $\varepsilon$ was arbitrary, we deduce that $H_{\alpha,\gamma}(f,g)$ is restricted weak type $(\infty,{\frac{M}{\alpha}},\infty)$.

  \item[$\star$] Consider the restricted weak type $(\infty,1,{\frac{M}{M-\alpha}})$. By Lemma \ref{lemma 2.2}, we can see that
 \begin{align*}
\left \|H_{\alpha,\gamma}(\chi_{A},\chi_{B})\right\|_{L^{\frac{M}{M-\alpha}}(\mathbb{R})}
\lesssim  \left \|I_{\alpha,\gamma}\chi_{B}\right\|_{L^{\frac{M}{M-\alpha}}(\mathbb{R})}\lesssim  \left \|\chi_{B}\right\|_{L^1(\mathbb{R})}\lesssim |B|,
\end{align*}
which further implies that $H_{\alpha,\gamma}(f,g)$ is restricted weak type $(\infty,1,{\frac{M}{M-\alpha}})$.
\end{enumerate}
Therefore, putting things together we finish the proof of Proposition \ref{proposition 2.4}.
\end{proof}

As in Remark \ref{remark 2.3}, we also have the following remark.

\begin{remark}\label{remark 2.5}
Let $\gamma $ be the same as in Remark \ref{remark 2.3}. Then $H_{\alpha,\gamma}(f,g)$  is of restricted weak types $({\frac{1}{\alpha}},\infty,\infty)$, $(1,\infty,{\frac{1}{1-\alpha}})$, $(\infty,{\frac{M}{\alpha}},\infty)$ and $(\infty,1,{\frac{M}{M-\alpha}})$ with $M\in [a, b]$.
\end{remark}

\section{A uniform estimate for $H_{\alpha,\gamma,j}$}

In this section, we devote to establish a uniform estimate for $H_{\alpha,\gamma,j}$, which comes from a decomposition of $H_{\alpha,\gamma}$. Indeed, without loss of generality, we can assume $f, g\geq 0$ and decompose
 \begin{align*}
H_{\alpha,\gamma}(f,g)(x)=\sum_{j \in \mathbb{Z}}\int_{2^{j-1}}^{2^j}f(x-t)g(x-\gamma(t))\,\frac{\textrm{d}t}{t^{1-\alpha}}\lesssim \sum_{j \in \mathbb{Z}}2^{(\alpha-1)j}H_{\alpha,\gamma,j}(f,g)(x),
\end{align*}
where
\begin{align}\label{eq:3.1}
H_{\alpha,\gamma,j}(f,g)(x):=\int_{2^{j-1}}^{2^j}f(x-t)g(x-\gamma(t))\,\textrm{d}t.
\end{align}

For estimating $H_{\alpha,\gamma,j}$, it is natural to use the theory of oscillatory integrals and stationary phase estimates. From $\lim\limits_{t\rightarrow 0^+}\gamma'(0)=0$ and \eqref{eq:1.1}, it implies that $|\gamma'(t)|$ is strictly increasing on $t\in\mathbb{R}^+$. As in Liu, Song and Yu \cite[Remark 1.4]{LSY}, by letting $G(t):=\ln |\gamma'(t)|$, we claim that
\begin{align}\label{eq:3.2}
 \frac{|\gamma'(2t)|}{|\gamma'(t)|}\geq e^{C_0 }
  \end{align}
holds for all $t\in \mathbb{R}^+$ with $C_0:=\frac{C^{(2)}_{1}}{2C^{(1)}_{2}}$, which further implies that $\lim\limits_{t\rightarrow \infty}|\gamma'(t)|=\infty$. Therefore, we may assume that $|\gamma'(1)|>1 $ by the dilation $f_{\delta}(x):=f(\delta x)$ and \eqref{eq:1.1}. Furthermore, the change of variables $u=x-t$ and $v=x-\gamma(t)$ yields that the Jocobian ${\frac{\partial(u,v)}{\partial(x,t)}}=1-\gamma'(t)$, and then there exists at most one stationary point $t_0$ satisfying $\gamma'(t_0)=1$ and $t_0\in(2^{j_0},2^{j_{0}+1}]$ for some negative integer $j_0$ small enough. We shall consider two cases: non-critical case and critical case\footnote{If there does not exist the stationary point $t_0$, this is identical to the non-critical case.} in the following Propositions \ref{proposition 3.1} and \ref{proposition 3.2}, respectively.

\begin{proposition}\label{proposition 3.1} (Non-critical case $j\neq j_0, j_0+1, j_0+2$) Assume that $\gamma\in C^2(\mathbb{R}^+)$ is monotonic on $\mathbb{R}^+$, $\lim\limits_{t\rightarrow 0^+}\gamma(t)=\lim\limits_{t\rightarrow 0^+}\gamma'(0)=0$, $\omega_{0,1}, \omega_{\infty,1}> 1$, and satisfy \eqref{eq:1.1}. Then, there exists a positive constant $C$, such that
$$\left\|H_{\alpha,\gamma,j}(f,g)\right\|_{L^{r}(\mathbb{R})}\leq C 2^j\|f\|_{L^{p}(\mathbb{R})}\|g\|_{L^{q}(\mathbb{R})}$$
 for all $f\in L^{p}(\mathbb{R})$ and $g\in L^{q}(\mathbb{R})$, where $p,q,r$ satisfy $\frac{1}{p}+\frac{1}{q}=\frac{1}{r}$ and $p, q \in [1, \infty]$.
\end{proposition}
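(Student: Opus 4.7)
The plan is to establish the estimate at the four extremal points $(1/p,1/q)\in\{(0,0),(1,0),(0,1),(1,1)\}$ and then fill the remaining values on the line segment $1/p+1/q=1/r$ by bilinear interpolation. The crucial input from the non-critical hypothesis $j\notin\{j_0,j_0+1,j_0+2\}$ is that $|1-\gamma'(t)|$ is uniformly bounded below by some $c_0>0$ on $[2^{j-1},2^j]$: indeed, the doubling estimate \eqref{eq:3.2} gives $|\gamma'(2^{j-1})|\leq e^{-C_0}$ when $j\leq j_0-1$ and $|\gamma'(2^{j-1})|\geq e^{2C_0}$ when $j\geq j_0+3$, so $\gamma'(t)$ is separated from $1$ in both regimes.

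The corner $(\infty,\infty,\infty)$ is immediate from $|H_{\alpha,\gamma,j}(f,g)(x)|\leq 2^{j-1}\|f\|_\infty\|g\|_\infty$, and the corners $(1,\infty,1)$ and $(\infty,1,1)$ follow directly from Fubini's theorem. For every intermediate point with $r\geq 1$, Minkowski's integral inequality combined with H\"older's inequality applied in $x$ yields
\[
\|H_{\alpha,\gamma,j}(f,g)\|_r \leq \int_{2^{j-1}}^{2^j}\|f(\cdot-t)g(\cdot-\gamma(t))\|_r\,dt \leq 2^{j-1}\|f\|_p\|g\|_q
\]
whenever $1/p+1/q=1/r$.

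The main difficulty is the fourth corner $(1,1,1/2)$, where $r<1$ precludes Minkowski. For it the plan is to perform the change of variables $(x,t)\mapsto(u,v):=(x-t,x-\gamma(t))$, whose Jacobian $1-\gamma'(t)$ is bounded in absolute value below by $c_0$, so that Fubini gives $\|H_{\alpha,\gamma,j}(f,g)\|_{L^1}\leq c_0^{-1}\|f\|_{L^1}\|g\|_{L^1}$. For indicator inputs $f=\chi_A$, $g=\chi_B$, combining this with the support bound $|\operatorname{supp} H_{\alpha,\gamma,j}(\chi_A,\chi_B)|\leq\min\{|A|+2^j,\,|B|+|\gamma([2^{j-1},2^j])|\}$ and the Cauchy-Schwarz inequality $\|F\|_{L^{1/2}}\leq|\operatorname{supp} F|\cdot\|F\|_{L^1}$ yields, after a case analysis playing $|A|,|B|$ against $2^j$ and $|\gamma([2^{j-1},2^j])|$ in the regimes $j<j_0$ and $j>j_0+2$, the restricted strong type bound $\|H_{\alpha,\gamma,j}(\chi_A,\chi_B)\|_{L^{1/2}}\leq C 2^j|A||B|$. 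An atomic decomposition using the $\tfrac{1}{2}$-subadditivity of the $L^{1/2}$ quasi-norm upgrades this to the strong type, and bilinear real interpolation with the other three corner estimates then completes the claim on the full line $1/p+1/q=1/r$.

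The main obstacle I expect is precisely this fourth corner: in the regime $j\gg j_0$ the Jacobian $|1-\gamma'(t)|$ is very large while $|\gamma([2^{j-1},2^j])|\gg 2^j$, whereas for $j\ll j_0$ the Jacobian is close to $1$ while $|\gamma([2^{j-1},2^j])|\ll 2^j$; keeping the constant uniform across both regimes and across the various sub-cases of the support analysis is where the careful bookkeeping lies.
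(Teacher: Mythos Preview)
Your overall architecture---three easy corners plus the $(1,1,\tfrac12)$ corner, then interpolation---is exactly the paper's. The gap is in how you reach the strong-type $L^1\times L^1\to L^{1/2}$ bound.

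Your plan is to first prove the \emph{restricted} strong type $\|H_{\alpha,\gamma,j}(\chi_A,\chi_B)\|_{L^{1/2}}\lesssim 2^j|A||B|$ and then ``upgrade'' via an atomic (level-set) decomposition and the $\tfrac12$-subadditivity of $\|\cdot\|_{L^{1/2}}^{1/2}$. That upgrade does not work. Writing $f=\sum_k f\chi_{\{2^k<|f|\le 2^{k+1}\}}\approx\sum_k 2^k\chi_{A_k}$ and similarly for $g$, the $\tfrac12$-triangle inequality gives
\[
\|H_{\alpha,\gamma,j}(f,g)\|_{L^{1/2}}^{1/2}\lesssim 2^{j/2}\Bigl(\sum_k 2^{k/2}|A_k|^{1/2}\Bigr)\Bigl(\sum_l 2^{l/2}|B_l|^{1/2}\Bigr),
\]
and $\bigl(\sum_k 2^{k/2}|A_k|^{1/2}\bigr)^2$ is in general \emph{not} controlled by $\sum_k 2^k|A_k|\approx\|f\|_1$ (take $|A_k|=2^{-k}$ for $k=1,\dots,N$: the left side is $N^2$, the right side $N$). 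So restricted strong type at this endpoint does not self-improve to strong type, and interpolation with the other corners will only give you the open interior, not $p=q=1$ itself, which the proposition does claim.

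The paper fixes this by localizing in \emph{space} rather than in height. One covers $\mathbb{R}$ by intervals $I_k$ of length $2^j$ (when $j<j_0$) or of length $|\gamma(2^j)|$ (when $j>j_0+2$), observes that $H_{\alpha,\gamma,j}(f\chi_{I_k},g\chi_{I_{k+d}})$ vanishes once $|d|$ exceeds a fixed constant, and on each surviving piece uses your Jacobian bound $\|H_{\alpha,\gamma,j}(f,g)\|_{L^1}\lesssim |1-\gamma'|^{-1}\|f\|_1\|g\|_1$ together with the support size of the \emph{localized} output (which is $\lesssim 2^j$, resp.\ $\lesssim|\gamma(2^j)|$, automatically). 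This yields
\[
\|H_{\alpha,\gamma,j}(f\chi_{I_k},g\chi_{I_{k+d}})\|_{L^{1/2}}\lesssim 2^j\|f\chi_{I_k}\|_{L^1}\|g\chi_{I_{k+d}}\|_{L^1}
\]
for \emph{arbitrary} $f,g\in L^1$, not just indicators; the key identity $|\gamma(2^j)|/|\gamma'(2^{j-1})|\lesssim 2^j$ from \eqref{eq:1.1} is what makes the $j>j_0+2$ case come out with the right constant. Then one sums over $k$ with $\tfrac12$-subadditivity and Cauchy--Schwarz in the \emph{spatial} index $k$, where the disjointness of the $I_k$ makes $\sum_k\|f\chi_{I_k}\|_1=\|f\|_1$ exact. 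Your support bound $|\operatorname{supp}H(\chi_A,\chi_B)|\le\min\{|A|+2^j,|B|+|\gamma([2^{j-1},2^j])|\}$ is the right instinct, but applied to global indicators it leads to the dead end above; applied after spatial localization it becomes simply the interval length and the argument closes.

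A minor secondary issue: even for indicators, your stated combination ``support bound times Jacobian $L^1$ bound'' fails when both $|A|\gg 2^j$ and $|B|\gg|\gamma([2^{j-1},2^j])|$; there you must also feed in the trivial bound $\|H_{\alpha,\gamma,j}(\chi_A,\chi_B)\|_{L^1}\le 2^{j-1}\min\{|A|,|B|\}$ from your $(1,\infty,1)$ and $(\infty,1,1)$ corners. This is easily repaired, but the spatial localization above bypasses it entirely.
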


\begin{proof} We first consider the $L^1(\mathbb{R})\times L^1(\mathbb{R}) \rightarrow L^{\frac{1}{2}}(\mathbb{R})$ estimate for $H_{\alpha,\gamma,j}$. If $j<j_0$, then
$$\left|{\frac{\partial(u,v)}{\partial(x,t)}}\right|=|1-\gamma'(t)|\geq 1-|\gamma'(t)|>1-|\gamma'(2^{j_0})|\gs 1,$$
and furthermore, the change of variables $u=x-t$ and $v=x-\gamma(t)$ yields that
\begin{align}\label{eq:3.3}
\left\|H_{\alpha,\gamma,j}(f,g)\right\|_{L^{1}(\mathbb{R})}\lesssim  \int_\mathbb{R}\int_\mathbb{R} f(u)g(v){\frac{1}{|1-\gamma'(t)|}}\,\textrm{d}u\textrm{d}v\lesssim \|f\|_{L^{1}(\mathbb{R})}\|g\|_{L^{1}(\mathbb{R})}.
 \end{align}
On the other hand, for any measurable set $E$ with finite measure, apply H\"{o}lder's inequality and \eqref{eq:3.3} to deduce that
\begin{align}\label{eq:3.4}
\left\|H_{\alpha,\gamma,j}(f,g)\chi_{E}\right\|_{L^{\frac{1}{2}}(\mathbb{R})}\leq |E|\left\|H_{\alpha,\gamma,j}(f,g)\right\|_{L^{1}(\mathbb{R})}\lesssim |E|\|f\|_{L^{1}(\mathbb{R})}\|g\|_{L^{1}(\mathbb{R})}.
\end{align}

Let the interval $I_k:=((k-1)2^j,k2^j]$ with $k\in \mathbb{Z}$. It is easy to check that $H_{\alpha,\gamma,j}(f \chi_{I_{k}},g\chi_{I_{k+d}})(x)$ is supported in a finitely many terms of these intervals and $H_{\alpha,\gamma,j}(f \chi_{I_{k}},g\chi_{I_{k+d}})(x)=0$ when $|d| >3$ for every fixed $k\in \mathbb{Z}$. Therefore, by \eqref{eq:3.4}, we conclude that
\begin{align*}
\left\|H_{\alpha,\gamma,j}(f \chi_{I_{k}},g\chi_{I_{k+d}})\right\|_{L^{\frac{1}{2}}(\mathbb{R})}\lesssim 2^j\left\|f \chi_{I_{k}}\right\|_{L^{1}(\mathbb{R})}\left\|g\chi_{I_{k+d}}\right\|_{L^{1}(\mathbb{R})},
\end{align*}
which, together with the triangle inequality and the Cauchy-Schwarz inequality, shows that
\begin{align}\label{eq:3.4a}
\left\|H_{\alpha,\gamma,j}(f,g)\right\|_{L^{\frac{1}{2}}(\mathbb{R})} &=\left\|\sum\limits_{d\in\mathbb{Z},~|d|\leq 3}\sum\limits_{k\in\mathbb{Z}}H_{\alpha,\gamma,j}(f \chi_{I_{k}},g\chi_{I_{k+d}})\right\|_{L^{\frac{1}{2}}(\mathbb{R})} \\
&\leq \sum\limits_{d\in\mathbb{Z},~|d|\leq 3} \left[\sum\limits_{k\in\mathbb{Z}} \left\|H_{\alpha,\gamma,j}(f \chi_{I_{k}},g\chi_{I_{k+d}})\right\|^{\frac{1}{2}}_{L^{\frac{1}{2}}(\mathbb{R})}    \right]^2\nonumber\\
&\lesssim 2^j \sum\limits_{d\in\mathbb{Z},~|d|\leq 3} \left[\sum\limits_{k\in\mathbb{Z}} \left(\left\|f \chi_{I_{k}}\right\|_{L^{1}(\mathbb{R})}\left\|g\chi_{I_{k+d}}\right\|_{L^{1}(\mathbb{R})} \right)^{\frac{1}{2}}  \right]^2\lesssim 2^j\|f\|_{L^{1}(\mathbb{R})}\|g\|_{L^{1}(\mathbb{R})}.\nonumber
\end{align}
This is the desired $L^1(\mathbb{R})\times L^1(\mathbb{R}) \rightarrow L^{\frac{1}{2}}(\mathbb{R})$ estimate for $H_{\alpha,\gamma,j}$ in the case of $j<j_0$.

We now turn to the case of $j>j_{0}+2$. An easy calculation gives
$$\left|{\frac{\partial(u,v)}{\partial(x,t)}}\right|\geq|\gamma'(t)|-1\geq|\gamma'(2^{j-1})|-1\geq|\gamma'(2^{j_0+2})|-|\gamma'(2^{j_0+1})|\gs 1.$$
Taking this into account one can see that
\begin{align*}
\left\|H_{\alpha,\gamma,j}(f,g)\right\|_{L^{1}(\mathbb{R})}\lesssim \frac{1}{|\gamma'(2^{j-1})|-1}\|f\|_{L^{1}(\mathbb{R})}\|g\|_{L^{1}(\mathbb{R})}.
 \end{align*}
As in \eqref{eq:3.4}, noticing that $|\gamma'(2^{j-1})|-1 \gs |\gamma'(2^{j-1})|$ for $j>j_{0}+2$, thus we have an estimate
\begin{align}\label{eq:3.5}
\left\|H_{\alpha,\gamma,j}(f,g)\chi_{E}\right\|_{L^{\frac{1}{2}}(\mathbb{R})}\lesssim \frac{|E|}{|\gamma'(2^{j-1})|}\|f\|_{L^{1}(\mathbb{R})}\|g\|_{L^{1}(\mathbb{R})}.
 \end{align}

From $\lim\limits_{t\rightarrow 0^+}\gamma(0)=0$ and \eqref{eq:1.1}, it shows that $|\gamma(t)|$ is strictly increasing on $t\in\mathbb{R}^+$. We also from \eqref{eq:2.2} have $\lim\limits_{t\rightarrow \infty}|\gamma(t)|=\infty$. Then, we can split $\mathbb{R}$ as a  union of intervals $J_k:=((k-1)|\gamma(2^j)|,k|\gamma(2^j)|]$ with $k\in \mathbb{Z}$. Furthermore, one sees that $H_{\alpha,\gamma,j}(f \chi_{J_{k}},g\chi_{J_{k+d}})(x)$ is supported in a finitely many terms of these intervals, and there exists a universal constant $C$ such that $H_{\alpha,\gamma,j}(f \chi_{J_{k}},g\chi_{J_{k+d}})(x)=0$ when $|d| >C$ for every fixed $k\in \mathbb{Z}$. Indeed, we may take $C:=\max \{-2+\frac{C^{(1)}_2}{2},1+2^{{j_0}+2}\}$. Therefore, by \eqref{eq:3.5}, \eqref{eq:2.2} and \eqref{eq:1.1}, we obtain
\begin{align*}
\left\|H_{\alpha,\gamma,j}(f \chi_{J_{k}},g\chi_{J_{k+d}})\right\|_{L^{\frac{1}{2}}(\mathbb{R})}\lesssim \frac{|\gamma(2^j)|}{|\gamma'(2^{j-1})|} \left\|f \chi_{J_{k}}\right\|_{L^{1}(\mathbb{R})}\left\|g\chi_{J_{k+d}}\right\|_{L^{1}(\mathbb{R})}\lesssim 2^j\left\|f \chi_{J_{k}}\right\|_{L^{1}(\mathbb{R})}\left\|g\chi_{J_{k+d}}\right\|_{L^{1}(\mathbb{R})}.
\end{align*}
Furthermore, as in \eqref{eq:3.4a}, we also assert that
\begin{align}\label{eq:3.50}
\left\|H_{\alpha,\gamma,j}(f,g)\right\|_{L^{\frac{1}{2}}(\mathbb{R})}\lesssim2^j\|f\|_{L^{1}(\mathbb{R})}\|g\|_{L^{1}(\mathbb{R})}
\end{align}
 holds for all $j>j_{0}+2$.

Putting things together, for the non-critical case $j\neq j_0, j_0+1, j_0+2$, we have established the $L^1(\mathbb{R})\times L^1(\mathbb{R}) \rightarrow L^{\frac{1}{2}}(\mathbb{R})$ estimate for $H_{\alpha,\gamma,j}$. The remaining $L^p(\mathbb{R})\times L^q(\mathbb{R}) \rightarrow L^r(\mathbb{R})$ estimates follow by interpolation with the following three trivial estimates: $L^\infty(\mathbb{R})\times L^\infty(\mathbb{R}) \rightarrow L^\infty(\mathbb{R})$, $L^\infty(\mathbb{R})\times L^1(\mathbb{R}) \rightarrow L^1(\mathbb{R})$ and $L^1(\mathbb{R})\times L^\infty(\mathbb{R}) \rightarrow L^1(\mathbb{R})$. Therefore, we finish the proof of Proposition \ref{proposition 3.1}.
\end{proof}

\begin{proposition}\label{proposition 3.2} (Critical cases $j= j_0, j_0+1, j_0+2$) Let $\gamma $ be the same as in Proposition \ref{proposition 3.1}. Then, there exists a positive constant $C$, such that
$$\left\|H_{\alpha,\gamma,j}(f,g)\right\|_{L^{r}(\mathbb{R})}\leq C \|f\|_{L^{p}(\mathbb{R})}\|g\|_{L^{q}(\mathbb{R})}$$
for all $f\in L^{p}(\mathbb{R})$ and $g\in L^{q}(\mathbb{R})$, where $p,q,r$ satisfy $\frac{1}{p}+\frac{1}{q}=\frac{1}{r}$, $p, q \in [1, \infty]$ and $(p,q)\neq (1,1)$.
\end{proposition}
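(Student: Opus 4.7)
The obstruction to running the proof of Proposition~\ref{proposition 3.1} in the critical range is the vanishing of the Jacobian $1-\gamma'(t)$ at (or near) the stationary point $t_0\in(2^{j_0},2^{j_0+1}]$. My plan is to decompose dyadically around $t_0$, apply the non-critical argument on each slice, and sum in the quasi-Banach sense; the excluded point $(p,q)=(1,1)$ will be precisely the borderline case in which the summation diverges. For every integer $k\geq 0$ set
\[
I_{j,k}:=\bigl\{t\in(2^{j-1},2^j]:2^{-k-1}<|t-t_0|\leq 2^{-k}\bigr\},\quad H_{\alpha,\gamma,j,k}(f,g)(x):=\int_{I_{j,k}}f(x-t)g(x-\gamma(t))\,\textrm{d}t,
\]
so that $H_{\alpha,\gamma,j}=\sum_k H_{\alpha,\gamma,j,k}$ (with only finitely many non-empty slices when $t_0\notin(2^{j-1},2^j]$). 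By the lower bound in \eqref{eq:1.1} at $j=2$, $|\gamma''(t_0)|$ is bounded below by a positive constant, so on $I_{j,k}$ one has $|I_{j,k}|\lesssim 2^{-k}$ and $|1-\gamma'(t)|\sim 2^{-k}$.

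\textbf{Slicewise bounds.} The change of variables $(u,v)=(x-t,x-\gamma(t))$ on $I_{j,k}$ is non-degenerate of size $2^{-k}$, so running the argument of Proposition~\ref{proposition 3.1} verbatim gives $\|H_{\alpha,\gamma,j,k}(f,g)\|_{L^1}\lesssim 2^k\|f\|_{L^1}\|g\|_{L^1}$. Next, partitioning $\mathbb{R}$ into intervals $\{J_m\}$ of length $2^{-k}$, I would observe that $H_{\alpha,\gamma,j,k}(f\chi_{J_m},g\chi_{J_{m'}})$ is supported in an interval of length $\lesssim 2^{-k}$ and vanishes unless $m'-m$ lies in a bounded window around $(\gamma(t_0)-t_0)/2^{-k}$. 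Using $\|F\|_{L^{1/2}}\leq|\supp F|\cdot\|F\|_{L^1}$, the $\ell^{1/2}$ quasi-triangle inequality, and one more Cauchy--Schwarz in $m$ exactly as in the bookkeeping of \eqref{eq:3.4a}, the $2^k$ cancels the $2^{-k}$ support diameter and one obtains the absolute endpoint bound
\[
\|H_{\alpha,\gamma,j,k}(f,g)\|_{L^{1/2}(\mathbb R)}\lesssim \|f\|_{L^1(\mathbb R)}\|g\|_{L^1(\mathbb R)}.
\]
Combined with the three trivial bounds (each of constant $\lesssim 2^{-k}$, coming from $|I_{j,k}|\lesssim 2^{-k}$), bilinear complex interpolation then yields, for every $(p,q)\in[1,\infty]^2$ with $1/r=1/p+1/q$,
\[
\|H_{\alpha,\gamma,j,k}(f,g)\|_{L^r(\mathbb R)}\lesssim 2^{-k\,\kappa(p,q)}\|f\|_{L^p(\mathbb R)}\|g\|_{L^q(\mathbb R)},\qquad \kappa(p,q):=\min\bigl(1,\,2-\tfrac{1}{p}-\tfrac{1}{q}\bigr),
\]
and $\kappa(p,q)>0$ precisely when $(p,q)\neq(1,1)$.

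\textbf{Summation and main obstacle.} For $r\geq 1$ Minkowski's inequality gives $\|H_{\alpha,\gamma,j}\|_{L^r}\leq\sum_k\|H_{\alpha,\gamma,j,k}\|_{L^r}$, while for $r<1$ the quasi-triangle inequality $\|\sum F_k\|_{L^r}^r\leq\sum\|F_k\|_{L^r}^r$ plays the same role. In both regimes the resulting geometric series $\sum_k 2^{-k\kappa(p,q)\min(r,1)}$ converges exactly when $(p,q)\neq(1,1)$, producing the desired uniform bound across $j\in\{j_0,j_0+1,j_0+2\}$. The technical heart of the argument is the slicewise $L^1\times L^1\to L^{1/2}$ estimate: the $2^k$ factor produced by the degenerate Jacobian has to be balanced precisely by the $2^{-k}$ diameter of the output's support via Cauchy--Schwarz in $L^{1/2}$, and this cancellation becomes non-summable exactly at $\kappa=0$, i.e.\ at the excluded point $(p,q)=(1,1)$. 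This reflects the well-known failure of the endpoint $L^1\times L^1\to L^{1/2}$ bound at a stationary point of the bilinear phase, and it is precisely why $(1,1)$ must be excluded from the statement.
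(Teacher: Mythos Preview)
Your proposal is correct and follows essentially the same strategy as the paper: a dyadic decomposition in $|t-t_0|$, the four endpoint bounds $L^1\times L^1\to L^{1/2}$ (constant $O(1)$ via the Jacobian/Cauchy--Schwarz localization) and the three trivial bounds (constant $O(2^{-k})$), followed by bilinear interpolation and summation in $k$. The paper differs only cosmetically: it first reduces to $x,t\in[0,1]$ by translation, treats $r\ge1$ directly by Minkowski--H\"older without any decomposition, and for $r\in(\tfrac12,1)$ shifts coordinates via $f_{t_0}(x)=f(x-t_0)$, $g_{t_0}(x)=g(x-\gamma(t_0))$, $\Upsilon_{t_0}(t)=\gamma(t+t_0)-\gamma(t_0)$ so that both displacements are $O(2^{-k})$ and the partition $\{\Omega_N\}$ is a single fixed grid rather than your shifted-window bookkeeping; it also splits off the finitely many scales with $2^{-k+2}>t_0$ explicitly. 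Your approach is slightly more uniform (one argument for all $r$) at the cost of carrying the offset $(\gamma(t_0)-t_0)2^{k}$ through the $L^{1/2}$ localization; the paper's coordinate shift makes that step cleaner but requires the preliminary reduction to $[0,1]$. One small remark: the precise value of $\kappa(p,q)$ depends on the interpolation path (iterated Riesz--Thorin gives $1-\tfrac{1}{pq}$ rather than $\min(1,2-\tfrac1p-\tfrac1q)$), but both are strictly positive off $(1,1)$, which is all you need.
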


\begin{proof}
We only consider the case $j= j_0$, the proofs of the cases $j=j_0+1, j_0+2$ are similar to the case $j= j_0$ and we omit the details. Since $j_0$ is a negative integer, we can bound $H_{\alpha,\gamma,j_0}(f,g)(x)$ by
$$\int_{0}^{1}f(x-t)g(x-\gamma(t))\,\textrm{d}t.$$
Moreover, we can restrict $x$ into the interval $[0,1]$. Indeed, let $f_N(x):=f(x+N)$ and $g_N(x):=g(x+N)$ if we have the restriction of $x$, then $\|H_{\alpha,\gamma,j_0}(f,g)\|^r_{L^{r}(\mathbb{R})}$ can be dominated by a constant times
\begin{align*}
\sum\limits_{N\in\mathbb{Z}}\int_{0}^{1}\left(\int_{0}^{1}f_N(x-t)g_N(x-\gamma(t))\,\textrm{d}t\right)^r\,\textrm{d}x
\lesssim \sum_{N\in\mathbb{Z}}\left\|f_{N}\right\|_{L^{p}(\mathbb{R})}^{r}\left\|g_{N}\right\|_{L^{q}(\mathbb{R})}^{r}
\lesssim \|f\|_{L^{p}(\mathbb{R})}^{r}\|g\|_{L^{q}(\mathbb{R})}^{r},
\end{align*}
where the last inequality follows from H\"{o}lder's inequality.

Therefore, it suffices to show that
\begin{align}\label{eq:3.7}
\int_{0}^{1}\left(\int_{0}^{1}f(x-t)g(x-\gamma(t))\,\textrm{d}t\right)^r\,\textrm{d}x\lesssim \|f\|_{L^{p}(\mathbb{R})}^{r}\|g\|_{L^{q}(\mathbb{R})}^{r}
\end{align}
for all $p,q,r$ satisfying $\frac{1}{p}+\frac{1}{q}=\frac{1}{r}$, $p, q \in [1, \infty]$ and $(p,q)\neq (1,1)$.

When $r\geq1$, it is easy to see that \eqref{eq:3.7} follows from Minkowski's integral inequality and H\"{o}lder's inequality. When $\frac{1}{2}<r<1$, we need some more complicated calculations. Indeed, let $\psi$ be a smooth function supported on $\{t\in \mathbb{R}:\ \frac{1}{2}\leq |t|\leq 2\}$ with the property that $0\leq \psi(t)\leq 1$ and $\Sigma_{k\in \mathbb{Z}} \psi_k(t)=1$, where $\psi_k(t):=\psi (2^{-k}t)$. In order to obtain \eqref{eq:3.7} with $\frac{1}{2}<r<1$, it suffices to establish the following estimate: there exists a positive constant $\varepsilon$ such that
\begin{align}\label{eq:3.8}
\int_{0}^{1}\left[\int_{0}^{1}f(x-t)g(x-\gamma(t))\psi\left(2^{k}(t-t_{0})\right)\,\textrm{d}t\right]^r\,\textrm{d}x\lesssim 2^{-\varepsilon k}\|f\|_{L^{p}(\mathbb{R})}^{r}\|g\|_{L^{q}(\mathbb{R})}^{r}
\end{align}
for all $k>0$, where $p,q,r$ satisfy $\frac{1}{p}+\frac{1}{q}=\frac{1}{r}$, $p, q \in [1, \infty]$ and $\frac{1}{2}<r<1$.

We establish \eqref{eq:3.8} by considering the following two cases:
\begin{enumerate}
  \item[$\bullet$] The case of $2^{-k+2}\leq t_0$. Notice that ${\frac{1}{2}}\cdot2^{-k}\leq|t-t_0|\leq2\cdot2^{-k}$, it implies $|t-t_0|\leq{\frac{1}{2}}t_0$ and we can then rewrite the left hand side of \eqref{eq:3.8} as
\begin{align}\label{eq:3.9}
\int_{0}^{1}\left[\int_{-\frac{t_0}{2}}^{\frac{t_0}{2}}f_{t_0}(x-t)g_{t_0}\left(x-\Upsilon_{t_0}(t)\right)\psi\left(2^{k}t\right)\,\textrm{d}t\right]^r\,\textrm{d}x,
\end{align}
where $$f_{t_0}(x):=f(x-t_0), ~~g_{t_0}(x):=g(x-\gamma(t_0))~~ \textrm{and} ~~\Upsilon_{t_0}(t):=\gamma(t+t_{0})-\gamma(t_{0}).$$ On the other hand, we obviously have $|t|\leq 2^{-k+1}$. Moreover, by Lagrange's mean value theorem, $|\Upsilon_{t_0}(t)|\leq |\gamma'(\zeta)||t|\leq 2|\gamma'(2)|2^{-k}$, where $\zeta\in({\frac{t_0}{2}},{\frac{3}{2}}t_0)\subset(0,2)$. Then, if we set
\begin{align*}
\Omega_{N}:=\left(2N|\gamma'(2)|2^{-k}, 2(N+1)|\gamma'(2)|2^{-k}\right]
\end{align*}
with $N=0,1,\ldots,\lceil\frac{2^k}{2|\gamma'(2)|}\rceil$, which trivially leads to $x-t, x-\Upsilon_{t_0}(t)\in \Omega_{N-1}\cup \Omega_{N} \cup \Omega_{N+1}$ for all $x\in \Omega_{N}$.

\quad Therefore, to obtain \eqref{eq:3.8}, it is enough to show that, there exists a positive constant $\varepsilon$ such that
\begin{align}\label{eq:3.10}
\left\|\Lambda_N(f_{t_0, N}, g_{t_0, N})\right\|_{L^{r}(\mathbb{R})}^{r}\lesssim 2^{-\varepsilon k}\|f_{t_0, N}\|_{L^{p}(\mathbb{R})}^{r}\|g_{t_0, N}\|_{L^{q}(\mathbb{R})}^{r}
\end{align}
for all $k>0$, where $f_{t_0, N}:=f_{t_0}\chi_{\Omega_{N}}$, $g_{t_0, N}:=g_{t_0}\chi_{\Omega_{N}}$ and
\begin{align*}
\Lambda_N(f_{t_0, N}, g_{t_0, N})(x):=\chi_{\Omega_{N}}(x)\int_{-\frac{t_0}{2}}^{\frac{t_0}{2}}f_{t_0, N}(x-t)g_{t_0, N}\left(x-\Upsilon_{t_0}(t)\right)\psi\left(2^{k}t\right)\,\textrm{d}t.
\end{align*}
Indeed, notice that $\frac{1}{2}<r<1$, from \eqref{eq:3.9}, by the triangle inequality, we can bound the left hand side of \eqref{eq:3.8} by $$\sum_{N\in [0, \lceil\frac{2^k}{2|\gamma'(2)|}\rceil]} \left\|\Lambda_N(f_{t_0, N}, g_{t_0, N})\right\|_{L^{r}(\mathbb{R})}^{r}$$ essentially. This, in combination with \eqref{eq:3.10} and H\"{o}lder's inequality, leads to the desired estimate \eqref{eq:3.8}.

\quad We now turn to the proof of \eqref{eq:3.10} by establishing the following estimates:
\begin{enumerate}
  \item[$\circ$] The $L^1(\mathbb{R})\times L^\infty(\mathbb{R}) \rightarrow L^1(\mathbb{R})$ boundedness of $\Lambda_N(f_{t_0, N}, g_{t_0, N})$. A calculation gives
\begin{align*}
\left\|\Lambda_N(f_{t_0, N}, g_{t_0, N})\right\|_{L^{1}(\mathbb{R})}&\leq \|g_{t_0, N}\|_{L^{\infty}(\mathbb{R})}\int_{0}^{1}\chi_{\Omega_{N}}(x)\int_{-\frac{t_0}{2}}^{\frac{t_0}{2}}f_{t_0, N}(x-t)\psi\left(2^{k}t\right)\,\textrm{d}t\,\textrm{d}x\\ & \lesssim 2^{- k}\|f_{t_0, N}\|_{L^{1}(\mathbb{R})}\|g_{t_0, N}\|_{L^{\infty}(\mathbb{R})}.
\end{align*}

  \item[$\circ$] The $L^{\infty}(\mathbb{R})\times L^1(\mathbb{R}) \rightarrow L^1(\mathbb{R})$ boundedness of $\Lambda_N(f_{t_0, N}, g_{t_0, N})$. One easily sees that
  \begin{align*}
\left\|\Lambda_N(f_{t_0, N}, g_{t_0, N})\right\|_{L^{1}(\mathbb{R})}&\leq \|f_{t_0, N}\|_{L^{\infty}(\mathbb{R})}\int_{-\frac{t_0}{2}}^{\frac{t_0}{2}}\int_{0}^{1}\chi_{\Omega_{N}}(x)g_{t_0, N}\left(x-\Upsilon_{t_0}(t)\right)\,\textrm{d}x ~\psi\left(2^{k}t\right) \,\textrm{d}t\\ & \lesssim 2^{- k}\|f_{t_0, N}\|_{L^{\infty}(\mathbb{R})}\|g_{t_0, N}\|_{L^{1}(\mathbb{R})}.
\end{align*}

  \item[$\circ$] The $L^1(\mathbb{R})\times L^1(\mathbb{R}) \rightarrow L^{\frac{1}{2}}(\mathbb{R})$ boundedness of $\Lambda_N(f_{t_0, N}, g_{t_0, N})$. As in Proposition \ref{proposition 3.1}, we need a $L^1(\mathbb{R})\times L^1(\mathbb{R}) \rightarrow L^{1}(\mathbb{R})$ estimate for $\Lambda_N(f_{t_0, N}, g_{t_0, N})$. By Lagrange's mean value theorem, it follows from the change of variables $u=x-t$ and $v=x-\Upsilon_{t_0}(t)$ that $$\left|{\frac{\partial(u,v)}{\partial(x,t)}}\right|=|1-\Upsilon_{t_0}'(t)|=|\gamma'(t+t_{0})-\gamma'(t_{0})|=|\gamma''(\vartheta)||t|\gs 2^{-k},$$
where $\vartheta \in [{\frac{1}{2}}t_0, {\frac{3}{2}}t_0]$. Then,
  \begin{align*}
\left\|\Lambda_N(f_{t_0, N}, g_{t_0, N})\right\|_{L^{1}(\mathbb{R})}&\lesssim  \int_\mathbb{R}\int_\mathbb{R} f_{t_0, N}(u)g_{t_0, N}(v)\left|{\frac{\partial(x,t)}{\partial(u,v)}}\right|\,\textrm{d}u\textrm{d}v\\ &\lesssim 2^{k} \|f_{t_0, N}\|_{L^{1}(\mathbb{R})}\|g_{t_0, N}\|_{L^{1}(\mathbb{R})}.
\end{align*}
Moreover, one may use the Cauchy-Schwartz inequality to deduce that
  \begin{align*}
\left\|\Lambda_N(f_{t_0, N}, g_{t_0, N})\right\|_{L^{\frac{1}{2}}(\mathbb{R})}\lesssim |\Omega_{N}|\left\|\Lambda_N(f_{t_0, N}, g_{t_0, N})\right\|_{L^{1}(\mathbb{R})} \lesssim \|f_{t_0, N}\|_{L^{1}(\mathbb{R})}\|g_{t_0, N}\|_{L^{1}(\mathbb{R})}.
\end{align*}
\end{enumerate}
Putting things together, it follows from an interpolation that \eqref{eq:3.10} holds for all $p,q,r$ satisfying $\frac{1}{p}+\frac{1}{q}=\frac{1}{r}$, $p, q \in [1, \infty]$ and $\frac{1}{2}<r<1$.

  \item[$\bullet$] The case of $2^{-k+2}> t_0$. Let
  $$\Pi_k(f,g)(x):=\int_{0}^{1}f(x-t)g(x-\gamma(t))\psi\left(2^{k}(t-t_{0})\right)\,\textrm{d}t,$$
  it follows from $2^{-k+2}> t_0$ that $k< \log_2 (\frac{4}{t_0})$, therefore to prove \eqref{eq:3.8} it will suffice to prove
   \begin{align*}
\left\|\Pi_k(f,g)\right\|_{L^{r}([0, 1])}\lesssim \|f\|_{L^{p}(\mathbb{R})}\|g\|_{L^{q}(\mathbb{R})}
\end{align*}
for all $p,q,r$ satisfying $\frac{1}{p}+\frac{1}{q}=\frac{1}{r}$, $p, q \in [1, \infty]$ and $\frac{1}{2}<r<1$. This can be proved by an interpolation with the following estimates:
\begin{enumerate}
  \item[$\circ$] The $L^1(\mathbb{R})\times L^\infty(\mathbb{R}) \rightarrow L^1([0, 1])$ boundedness of $\Pi_k(f,g)$. It is easy to get
\begin{align*}
\left\|\Pi_k(f,g)\right\|_{L^{1}([0, 1])}&\leq \|g\|_{L^{\infty}(\mathbb{R})}\int_{0}^{1}\int_{0}^{1} f(x-t)\psi\left(2^{k}(t-t_{0})\right)\,\textrm{d}t\,\textrm{d}x \lesssim \|f\|_{L^{1}(\mathbb{R})}\|g\|_{L^{\infty}(\mathbb{R})}.
\end{align*}

  \item[$\circ$] The $L^{\infty}(\mathbb{R})\times L^1(\mathbb{R}) \rightarrow L^1([0, 1])$ boundedness of $\Pi_k(f,g)$. One obtains
  \begin{align*}
\left\|\Pi_k(f,g)\right\|_{L^{1}([0, 1])}&\leq \|f\|_{L^{\infty}(\mathbb{R})}\int_{0}^{1}\int_{0}^{1}g(x-\gamma(t))\psi\left(2^{k}(t-t_{0})\right)\,\textrm{d}t\,\textrm{d}x \lesssim \|f\|_{L^{\infty}(\mathbb{R})}\|g\|_{L^{1}(\mathbb{R})}.
\end{align*}

  \item[$\circ$] The $L^1(\mathbb{R})\times L^1(\mathbb{R}) \rightarrow L^{\frac{1}{2}}([0, 1])$ boundedness of $\Pi_k(f,g)$. From ${\frac{1}{2}}\cdot2^{-k}\leq|t-t_0|\leq2\cdot2^{-k}$, $2^{-k+2}> t_0$ and $0<t<1$, which means that $0<t\leq \frac{7}{8}t_0$ or $ \frac{9}{8}t_0\leq t<1$. Furthermore, by the facts that $\gamma'(t_0)=1$ and $|\gamma'(t)|$ is strictly increasing on $t\in\mathbb{R}^+$, the change of variables $u=x-t$ and $v=x-\gamma(t)$ implies that
     \begin{align*}
     \left|{\frac{\partial(u,v)}{\partial(x,t)}}\right|&=|1-\gamma'(t)|=|\gamma'(t)-\gamma'(t_{0})|\\
     &\geq
\begin{cases}|\gamma'(t)|-|\gamma'(t_{0})|\geq \left|\gamma'\left(\frac{9}{8}t_0\right)\right|-\left|\gamma'(t_{0})\right|\gs 1, ~~~\textrm{if} ~\frac{9}{8}t_0\leq t<1;\\
|\gamma'(t_{0})|-|\gamma'(t)|\geq \left|\gamma'(t_{0})\right|-\left|\gamma'\left(\frac{7}{8}t_0\right)\right|\gs 1, ~~~\textrm{if} ~0<t\leq \frac{7}{8}t_0.
\end{cases}
\end{align*}
Hence, we have
  \begin{align*}
\left\|\Pi_k(f,g)\right\|_{L^{1}([0, 1])}&\lesssim  \int_\mathbb{R}\int_\mathbb{R} f(u)g(v)\left|{\frac{\partial(x,t)}{\partial(u,v)}}\right|\,\textrm{d}u\textrm{d}v\lesssim  \|f\|_{L^{1}(\mathbb{R})}\|g\|_{L^{1}(\mathbb{R})}.
\end{align*}
This, together with the Cauchy-Schwartz inequality, leads to
  \begin{align*}
\left\|\Pi_k(f,g)\right\|_{L^{\frac{1}{2}}([0, 1])}\lesssim\left\|\Pi_k(f,g)\right\|_{L^{1}([0, 1])}\lesssim \|f\|_{L^{1}(\mathbb{R})}\|g\|_{L^{1}(\mathbb{R})}.
\end{align*}
\end{enumerate}
\end{enumerate}

Therefore, we finish the proof of the Proposition \ref{proposition 3.2}.
\end{proof}

\section{Estimates for $H_{\alpha,\gamma}$ with $f\in L^1(\mathbb{R})$}

Recall that
\begin{align}\label{eq:4.1}
H_{\alpha,\gamma}(f,g)(x)\lesssim \sum_{j \in \mathbb{Z}}2^{(\alpha-1)j}H_{\alpha,\gamma,j}(f,g)(x).
\end{align}
Our goal in this section is to establish some $L^p(\mathbb{R})\times L^q(\mathbb{R}) \rightarrow L^{r,\infty}(\mathbb{R})$ estimates for $H_{\alpha,\gamma}$ with $p=1$ by considering the corresponding estimates for $H_{\alpha,\gamma,j}$.

\begin{proposition}\label{proposition 4.1}
Let $\gamma $ be the same as in Theorem \ref{maintheorem}. Then, there exists a positive constant $C$, such that
$$\left\|H_{\alpha,\gamma}(f,g)\right\|_{L^{r(\varepsilon),\infty}(\mathbb{R})}\leq C \|f\|_{L^{1}(\mathbb{R})}\left\|g\right\|_{L^{2r(\varepsilon)}(\mathbb{R})},$$
where $\frac{1}{r( \varepsilon ) }:=\frac{2(\omega_{\infty,2}-\alpha)}{2\omega_{\infty,2}-\omega_{\infty,1}}-\varepsilon $ with $\varepsilon>0$ small enough.
\end{proposition}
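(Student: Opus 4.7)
The plan is to establish the weak-type $L^1 \times L^{2r(\varepsilon)} \to L^{r(\varepsilon),\infty}$ estimate near the vertex $E$ by combining the dyadic decomposition \eqref{eq:4.1} with a level-set argument. I would fix $\lambda > 0$, set $E_\lambda := \{x : H_{\alpha,\gamma}(f,g)(x) > \lambda\}$, and seek $|E_\lambda| \lesssim \lambda^{-r(\varepsilon)} \|f\|_{L^1}^{r(\varepsilon)} \|g\|_{L^{2r(\varepsilon)}}^{r(\varepsilon)}$. Using \eqref{eq:4.1}, the task reduces to splitting the $j$-sum at a threshold $J$ to be chosen in terms of $\lambda$ and the norms, writing $H_{\alpha,\gamma}(f,g) \lesssim H^<(f,g) + H^>(f,g)$ with $H^<$ collecting $j \leq J$ and $H^>$ collecting $j > J$, and controlling $|\{H^< > \lambda/2\}|$ and $|\{H^> > \lambda/2\}|$ separately.

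For $H^<$, Propositions \ref{proposition 3.1} and \ref{proposition 3.2} applied at $(p,q,s) = (1, 2r(\varepsilon), 2r(\varepsilon)/(2r(\varepsilon)+1))$ give $\|H_{\alpha,\gamma,j}(f,g)\|_{L^s} \lesssim 2^j \|f\|_{L^1} \|g\|_{L^{2r(\varepsilon)}}$; summing the geometric series $\sum_{j \leq J} 2^{\alpha j}$ and applying Chebyshev's inequality controls this contribution. For $H^>$, the uniform bound alone is insufficient since $\sum_{j > J} 2^{\alpha j}$ diverges, so I must exploit the rapid growth of $\gamma$ at infinity. By Lemma \ref{lemma 2.1}, $|\gamma(2^j)| \gtrsim 2^{j(\omega_{\infty,1}-\varepsilon)}$ for $j$ sufficiently large. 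Using $\gamma'(t) \approx \gamma(t)/t$ from hypothesis \eqref{eq:1.1}, the change of variables $\sigma = \gamma(t)$ followed by Cauchy--Schwarz in $\sigma$ yields the factorization
\begin{align*}
H_{\alpha,\gamma,j}(f,g)(x) \lesssim \left(\frac{2^j}{|\gamma(2^j)|}\right)^{1/2} \left(\int_{2^{j-1}}^{2^j} f(x-t)^{2}\, dt\right)^{1/2} \left(\int_{\gamma(2^{j-1})}^{\gamma(2^j)} g(x-\sigma)^{2}\, d\sigma\right)^{1/2},
\end{align*}
whose prefactor $\lesssim 2^{j(1-\omega_{\infty,1}+\varepsilon)/2}$ produces the decay needed to sum over $j > J$ and pass to the $L^{2r(\varepsilon)}$-norm in $x$ via Hardy--Littlewood maximal bounds on the second factor.

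The main obstacle is reconciling this factorization with the $L^1$-hypothesis on $f$: since $f \in L^1$ does not imply $f^2 \in L^1_{\mathrm{loc}}$, the integral $\int f(x-t)^2\, dt$ need not be finite. To bypass this I would layer-decompose $f = \sum_k f_k$ with $f_k := f \chi_{\{2^k \leq |f| < 2^{k+1}\}}$, apply the factorization to each $f_k$ (which satisfies $\|f_k\|_{L^\infty} \leq 2^{k+1}$ and $\|f_k\|_{L^1} \leq \|f\|_{L^1}$), and sum in $k$ using the layer-cake representation together with the disjointness of the $\mathrm{supp}(f_k)$. The precise exponent $r(\varepsilon) = \big(2(\omega_{\infty,2}-\alpha)/(2\omega_{\infty,2}-\omega_{\infty,1})-\varepsilon\big)^{-1}$ then emerges by balancing the contributions of $H^<$ and $H^>$ through the choice of $J$; the $\omega_{\infty,2}$-dependence enters via the upper bound $|\gamma(2^j)| \lesssim 2^{j(\omega_{\infty,2}+\varepsilon)}$ of Lemma \ref{lemma 2.1}, which measures the effective length of the $\sigma$-integral against $\|g\|_{L^{2r(\varepsilon)}}$. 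The finitely many critical indices $j \in \{j_0, j_0+1, j_0+2\}$ near the stationary point are absorbed directly using Proposition \ref{proposition 3.2}.
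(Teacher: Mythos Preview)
Your approach has a genuine gap in the treatment of $H^>$. The prefactor $(2^j/|\gamma(2^j)|)^{1/2}$ in your Cauchy--Schwarz factorization is not actual decay: the factor $B_j(x)^{1/2}=\bigl(\int_{\gamma(2^{j-1})}^{\gamma(2^j)} g(x-\sigma)^2\,d\sigma\bigr)^{1/2}$ is an integral over an interval of length $\approx |\gamma(2^j)|$, and any estimate of it in terms of $\|g\|_{L^{2r(\varepsilon)}}$ --- via maximal functions, Young's inequality, or otherwise --- reintroduces a compensating power $|\gamma(2^j)|^{\beta}$ that cancels the prefactor. (For bounded $g$ one has $B_j^{1/2}\lesssim |\gamma(2^j)|^{1/2}\|g\|_\infty$, and the product with the prefactor collapses to $2^{j/2}$, recovering nothing beyond the trivial bound.) The layer decomposition of $f$ does not rescue this: the Cauchy--Schwarz step forces an $L^2$-type quantity on $f$, and the resulting $k$-sum $\sum_k 2^{k/2}\|f_k\|_{L^1}^{1/2}$ is controlled by $\|f\|_{L^2}$, not $\|f\|_{L^1}$. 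Finally, your proposed mechanism for $\omega_{\infty,2}$ --- the \emph{upper} bound on $|\gamma(2^j)|$ measuring the $\sigma$-interval --- yields a growing factor, not a summable one, so the exponent bookkeeping cannot close.

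The paper's route is structurally different and avoids Cauchy--Schwarz on the $(f,g)$ product altogether. It starts from the Jacobian change of variables $(x,t)\mapsto (x-t,\,x-\gamma(t))$, which for $j>j_0+2$ gives the genuine decay $\|H_{\alpha,\gamma,j}(f,g)\chi_E\|_{L^{1/2}}\lesssim \min\{2^j,\,|E|/|\gamma'(2^{j-1})|\}\,\|f\|_{L^1}\|g\|_{L^1}$ (equation~\eqref{eq:4.4}). Interpolating this against the trivial $L^1\times L^\infty\to L^1$ bound (in $g$, with $f\in L^1$ fixed) produces $L^1\times L^{1/\theta_1}\to L^{1/(1+\theta_1)}$ estimates carrying explicit $\min$-factors in $2^j$, $|E|$, and $|\gamma'(2^{j-1})|^{-\theta_1}$. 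Applying Chebyshev with $E=E_\lambda$ then yields a \emph{self-referential} inequality $\lambda^{1/(1+\theta_1)}|E_\lambda|\lesssim |E_\lambda|^{\mu}\|f\|_{L^1}^{1/(1+\theta_1)}\|g\|_{L^{1/\theta_1}}^{1/(1+\theta_1)}$ which is solved for $|E_\lambda|$; the index $\omega_{\infty,2}$ enters through the summation over the range $2^j\le |E_\lambda|<2^j|\gamma'(2^{j-1})|$ (the $\Delta_{3,2}$ term), where Lemma~\ref{lemma 2.1} converts the $j$-range into the correct power of $|E_\lambda|$. You should abandon the factorization and instead pursue this interpolation between \eqref{eq:4.2} and \eqref{eq:4.3}--\eqref{eq:4.4}.
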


\begin{proof}
For $H_{\alpha,\gamma,j}$ defined in \eqref{eq:3.1}, let $E$ be a measurable set with finite measure, a calculation gives
\begin{align*}
 \left \|H_{\alpha,\gamma,j}(f,g)\chi_{E}\right\|_{L^{1}(\mathbb{R})}
  \leq\|g\|_{L^{\infty}(\mathbb{R})}\int_{E}\int_{2^{j-1}}^{2^j}f(x-t)\,\textrm{d}t\,\textrm{d}x
  \leq|E|\|f\|_{L^{1}(\mathbb{R})}\|g\|_{L^{\infty}(\mathbb{R})}
 \end{align*}
for all $j\in \mathbb{Z}$. This, combined with Propositions \ref{proposition 3.1} and \ref{proposition 3.2}, shows that
  \begin{align}\label{eq:4.2}
  \left\|H_{\alpha,\gamma,j}(f,g)\chi_{E}\right\|_{L^{1}(\mathbb{R})}\lesssim \min\left\{2^j,|E|\right\}\|f\|_{L^{1}(\mathbb{R})}\|g\|_{L^{\infty}(\mathbb{R})}
  \end{align}
for all $j\in \mathbb{Z}$. From \eqref{eq:3.4} and \eqref{eq:3.4a}, we can write
  \begin{align}\label{eq:4.3}
  \left\|H_{\alpha,\gamma,j}(f,g)\chi_{E}\right\|_{L^{\frac{1}{2}}(\mathbb{R})}\lesssim \min\left\{2^j,|E|\right\}\|f\|_{L^{1}(\mathbb{R})}\|g\|_{L^{1}(\mathbb{R})}
  \end{align}
  for all $j<j_0$, and by \eqref{eq:3.5} and \eqref{eq:3.50}, it follows that
   \begin{align}\label{eq:4.4}
  \left\|H_{\alpha,\gamma,j}(f,g)\chi_{E}\right\|_{L^{\frac{1}{2}}(\mathbb{R})}\lesssim \min\left\{2^j,\frac{|E|}{|\gamma'(2^{j-1})|}\right\}\|f\|_{L^{1}(\mathbb{R})}\|g\|_{L^{1}(\mathbb{R})}
  \end{align}
 for all $j>j_0+2$.

Now, for any fixed $f\in L^1(\mathbb{R})$, the operator $H_{\alpha,\gamma,j}(f,g)\chi_{E}$ may be viewed as a linear operator about $g$. Interpolation \eqref{eq:4.2} with \eqref{eq:4.3} implies that
 \begin{align}\label{eq:4.5}
  \left\|H_{\alpha,\gamma,j}(f,g)\chi_{E}\right\|_{L^{\frac{1}{1+\theta_1}}(\mathbb{R})}\lesssim \min\left\{2^j,|E|\right\}\|f\|_{L^{1}(\mathbb{R})}\|g\|_{L^{\frac{1}{\theta_1}}(\mathbb{R})}
  \end{align}
for all $j<j_0$ with $\theta_1\in (0,1)$. From Proposition \ref{proposition 3.2}, reiterating this interpolation process, we also have
\begin{align}\label{eq:4.7}
  \left\|H_{\alpha,\gamma,j}(f,g)\chi_{E}\right\|_{L^{\frac{1}{1+\theta_1}}(\mathbb{R})}\lesssim \min\left\{2^{j_0},2^{{j_0}\theta_{1}}|E|^{1-\theta_{1}}\right\}\|f\|_{L^{1}(\mathbb{R})}\|g\|_{L^{\frac{1}{\theta_1}}(\mathbb{R})}
  \end{align}
for all $j=j_0, j_0+1, j_0+2$. Using the interpolation argument on \eqref{eq:4.2} with \eqref{eq:4.4}, we claim that
\begin{align}\label{eq:4.6}
  \left\|H_{\alpha,\gamma,j}(f,g)\chi_{E}\right\|_{L^{\frac{1}{1+\theta_1}}(\mathbb{R})}\lesssim \min\left\{2^j,\frac{2^{j(1-\theta_{1})}|E|^{\theta_1}}{|\gamma'(2^{j-1})|^{\theta_{1}}},2^{j\theta_1} |E|^{1-\theta_1},\frac{|E|}{|\gamma'(2^{j-1})|^{\theta_1}}\right\}\|f\|_{L^{1}(\mathbb{R})}\|g\|_{L^{\frac{1}{\theta_1}}(\mathbb{R})}
  \end{align}
for all $j>j_0+2$.

Now pick $E_{\lambda}:=\{x\in{\mathbb{R}}:\ |H_{\alpha,\gamma}(f,g)(x)|>\lambda\}$ with nonnegative functions $f, g \in L^1(\mathbb{R}) \cap L^{\infty}(\mathbb{R})$. One may use Chebyshev's inequality and \eqref{eq:4.1}, \eqref{eq:4.5}-\eqref{eq:4.6} to deduce that
\begin{align}\label{eq:4.8}
 \lambda^{\frac{1}{1+\theta_{1}}}|E_{\lambda}|&\lesssim \int_{E_{\lambda}}\left(\sum\limits_{j\in\mathbb{Z}}2^{(\alpha-1)j}H_{\alpha,\gamma,j}(f,g)(x)\right)^{\frac{1}{1+\theta_{1}}}\,\textrm{d}x
\lesssim\sum\limits_{j\in\mathbb{Z}}2^{{\frac{\alpha-1}{1+\theta_{1}}}j}\int_{E_{\lambda}}H_{\alpha,\gamma,j}(f,g)(x))^{\frac{1}{1+\theta_{1}}}\,\textrm{d}x\\ \nonumber
&\lesssim \Delta_{1}+\Delta_{2}+\Delta_{3},
  \end{align}
where
$$\begin{cases}
   \Delta_{1}:=\sum\limits_{j<j_{0}}2^{{\frac{\alpha-1}{1+\theta_{1}}j}}\min\left\{2^j,|E_\lambda|\right\}^{\frac{1}{1+\theta_{1}}}
\|f\|_{L^{1}(\mathbb{R})}^{\frac{1}{1+\theta_{1}}}\|g\|_{L^{\frac{1}{\theta_1}}(\mathbb{R})}^{\frac{1}{1+\theta_{1}}}; \\
   \Delta_{2}:=2^{{\frac{\alpha-1}{1+\theta_{1}}}j_{0}}\min\left\{2^{j_0},2^{{j_0}\theta_{1}}|E_\lambda|^{1-\theta_{1}}\right\}^{\frac{1}{1+\theta_{1}}}\|f\|_{L^{1}(\mathbb{R})}^{\frac{1}{1+\theta_{1}}}\|g\|_{L^{\frac{1}{\theta_1}}(\mathbb{R})}^{\frac{1}{1+\theta_{1}}};\\
 \Delta_{3}:=\sum\limits_{j>j_{0}+2}2^{{\frac{\alpha-1}{1+\theta_{1}}}j}\min\left\{2^j,\frac{2^{j(1-\theta_{1})}|E_\lambda|^{\theta_1}}{|\gamma'(2^{j-1})|^{\theta_{1}}},2^{j\theta_1} |E_\lambda|^{1-\theta_1},\frac{|E_\lambda|}{|\gamma'(2^{j-1})|^{\theta_1}}\right\}^{\frac{1}{1+\theta_{1}}}\|f\|_{L^{1}(\mathbb{R})}^{\frac{1}{1+\theta_{1}}}\|g\|_{L^{\frac{1}{\theta_1}}(\mathbb{R})}^{\frac{1}{1+\theta_{1}}}.
 \end{cases}$$

These three terms are treated separately in the following section. We are ready to calculate $\Delta_{1}$, which can be written as
\begin{align*}
\left[\sum\limits_{j<j_{0},~{|E_{\lambda}|\geq2^j}}2^{{\frac{\alpha}{1+\theta_{1}}}j}+
\sum\limits_{j<j_{0},~{|E_{\lambda}|<2^j}}2^{{\frac{\alpha-1}{1+\theta_{1}}}j}|E_\lambda|^{\frac{1}{1+\theta_{1}}}\right]
\|f\|_{L^{1}(\mathbb{R})}^{\frac{1}{1+\theta_{1}}}\|g\|_{L^{\frac{1}{\theta_1}}(\mathbb{R})}^{\frac{1}{1+\theta_{1}}}.
\end{align*}
Notice that
$$\sum\limits_{j<j_{0},~{E_{\lambda}\geq2^j}}2^{{\frac{\alpha}{1+\theta_{1}}}j}\lesssim
\begin{cases}
 2^{{\frac{\alpha}{1+\theta_{1}}}{j_0}} , & |E_\lambda|\geq2^{j_0}; \\
  |E_\lambda|^{\frac{\alpha}{1+\theta_{1}}}, & |E_\lambda|<2^{j_0},
\end{cases}$$
and
$$\sum\limits_{j<j_{0},~{|E_{\lambda}|<2^j}}2^{{\frac{\alpha-1}{1+\theta_{1}}}j}|E_\lambda|^{\frac{1}{1+\theta_{1}}}\lesssim
\begin{cases}
 0, & |E_\lambda|\geq2^{j_0}; \\
  |E_\lambda|^{\frac{\alpha}{1+\theta_{1}}}, & |E_\lambda|<2^{j_0}.
\end{cases}$$
Hence we have the estimate
\begin{align}\label{eq:4.9}
\Delta_{1}\lesssim
\begin{cases}
  \|f\|_{L^{1}(\mathbb{R})}^{\frac{1}{1+\theta_{1}}}\|g\|_{L^{\frac{1}{\theta_1}}(\mathbb{R})}^{\frac{1}{1+\theta_{1}}}, & |E_\lambda|\geq2^{j_0}; \\
  |E_\lambda|^{\frac{\alpha}{1+\theta_{1}}}\|f\|_{L^{1}(\mathbb{R})}^{\frac{1}{1+\theta_{1}}}\|g\|_{L^{\frac{1}{\theta_1}}(\mathbb{R})}^{\frac{1}{1+\theta_{1}}}, & |E_\lambda|<2^{j_0}.
\end{cases}
\end{align}

Now we turn to the second term $\Delta_{2}$, it is easy to see that
\begin{align}\label{eq:4.10}
\Delta_{2}\lesssim
\begin{cases}
   \|f\|_{L^{1}(\mathbb{R})}^{\frac{1}{1+\theta_{1}}}\|g\|_{L^{\frac{1}{\theta_1}}(\mathbb{R})}^{\frac{1}{1+\theta_{1}}}, &  |E_\lambda|\geq2^{j_0}; \\
   |E_\lambda|^{\frac{1-\theta_{1}}{1+\theta_{1}}}\|f\|_{L^{1}(\mathbb{R})}^{\frac{1}{1+\theta_{1}}}\|g\|_{L^{\frac{1}{\theta_1}}(\mathbb{R})}^{\frac{1}{1+\theta_{1}}}, & |E_\lambda|<2^{j_0}.
 \end{cases}
\end{align}

Our next goal is to consider $\Delta_{3}$, a calculation gives
\begin{align*}
\min\left\{2^j,\frac{2^{j(1-\theta_{1})}|E_\lambda|^{\theta_1}}{|\gamma'(2^{j-1})|^{\theta_{1}}},2^{j\theta_1} |E_\lambda|^{1-\theta_1},\frac{|E_\lambda|}{|\gamma'(2^{j-1})|^{\theta_1}}\right\}=
 \begin{cases}
   2^j, &  |E_\lambda|\geq2^j|\gamma'(2^{j-1})|; \\
   \frac{2^{j(1-\theta_{1})}|E_\lambda|^{\theta_1}}{|\gamma'(2^{j-1})|^{\theta_{1}}},&2^j\leq|E_\lambda|<2^j|\gamma'(2^{j-1})|;\\
  \frac{|E_\lambda|}{|\gamma'(2^{j-1})|^{\theta_1}} , & |E_\lambda|<2^j.
 \end{cases}
 \end{align*}
We can then write $\Delta_{3}=\Delta_{3,1}+\Delta_{3,2}+\Delta_{3,3}$, where
$$\begin{cases}
   \Delta_{3,1}:= \sum\limits_{  \substack{j>j_{0}+2   \\|E_\lambda|\geq2^j|\gamma'(2^{j-1})|  }}    2^{{\frac{\alpha}{1+\theta_{1}}}j}
\|f\|_{L^{1}(\mathbb{R})}^{\frac{1}{1+\theta_{1}}}\|g\|_{L^{\frac{1}{\theta_1}}(\mathbb{R})}^{\frac{1}{1+\theta_{1}}}; \\
   \Delta_{3,2}:=\sum\limits_{  \substack{j>j_{0}+2   \\2^j\leq|E_\lambda|<2^j|\gamma'(2^{j-1})|  }}   {\frac{2^{{\frac{\alpha-\theta_{1}}{1+\theta_{1}}}j}|E_\lambda|^{\frac{\theta_{1}}{1+\theta_{1}}}}{|\gamma'(2^{j-1})|^{\frac{\theta_{1}}{1+\theta_{1}}}}} \|f\|_{L^{1}(\mathbb{R})}^{\frac{1}{1+\theta_{1}}}\|g\|_{L^{\frac{1}{\theta_1}}(\mathbb{R})}^{\frac{1}{1+\theta_{1}}};\\
 \Delta_{3,3}:=\sum\limits_{  \substack{j>j_{0}+2   \\|E_\lambda|<2^j}}   {\frac{2^{{\frac{\alpha-1}{1+\theta_{1}}}j}|E_\lambda|^{\frac{1}{1+\theta_{1}}}}{|\gamma'(2^{j-1})|^{\frac{\theta_{1}}{1+\theta_{1}}}}}
 \|f\|_{L^{1}(\mathbb{R})}^{\frac{1}{1+\theta_{1}}}\|g\|_{L^{\frac{1}{\theta_1}}(\mathbb{R})}^{\frac{1}{1+\theta_{1}}}.
 \end{cases}$$
Corresponding to $\Delta_{3,1}$, for any $\varepsilon>0$, it follows from Lemma \ref{lemma 2.1} that
$$\sum\limits_{  \substack{j>j_{0}+2   \\|E_\lambda|\geq2^j|\gamma'(2^{j-1})|  }}    2^{{\frac{\alpha}{1+\theta_{1}}}j}\lesssim
\begin{cases}
   |E_\lambda|^{\frac{\alpha}{(\omega_{\infty,1}-\varepsilon)(1+\theta_{1})}}, & |E_\lambda|\geq2^{j_0+2}|\gamma'(2^{j_0+1})|; \\
  0, & 2^{j_0+2}\leq|E_{\lambda}|<2^{j_0+2}|\gamma'(2^{j_{0}+1})|;\\
  0, &|E_{\lambda}|<2^{j_0+2}.
\end{cases}$$
On the other hand, let us set $\varepsilon\in(0,\omega_{\infty,1})$, and we take that $\alpha<\theta_{1}(\omega_{\infty,1}-\varepsilon)$, then by Lemma \ref{lemma 2.1} one has
$$\sum\limits_{  \substack{j>j_{0}+2   \\2^j\leq|E_\lambda|<2^j|\gamma'(2^{j-1})|  }}   {\frac{2^{{\frac{\alpha-\theta_{1}}{1+\theta_{1}}}j}|E_\lambda|^{\frac{\theta_{1}}{1+\theta_{1}}}}{|\gamma'(2^{j-1})|^{\frac{\theta_{1}}{1+\theta_{1}}}}} \lesssim
\begin{cases}
  |E_\lambda|^{{\frac{\alpha+\theta_1(\omega_{\infty,2}-\omega_{\infty,1}+2\varepsilon)}{(\omega_{\infty,2}+\varepsilon)(1+\theta_{1})}}}, & |E_\lambda|\geq2^{j_0+2}|\gamma'(2^{j_{0}+1})|;\\
  |E_\lambda|^{{\frac{\alpha+\theta_1(\omega_{\infty,2}-\omega_{\infty,1}+2\varepsilon)}{(\omega_{\infty,2}+\varepsilon)(1+\theta_{1})}}}, & 2^{j_0+2}\leq|E_{\lambda}|<2^{j_0+2}|\gamma'(2^{j_{0}+1})|;\\
 0, &|E_{\lambda}|<2^{j_0+2}.
\end{cases}$$
This is the estimate about $\Delta_{3,2}$. Furthermore, for estimating $\Delta_{3,3}$, we need to take $\alpha-1+\theta_{1}>0$. It follows from $\omega_{\infty,1}>1$ that $\alpha-1+\theta_{1}-\theta_{1}(\omega_{\infty,1}-\varepsilon)<0$ for all $\varepsilon\in(0,\omega_{\infty,1}-1]$. From Lemma \ref{lemma 2.1}, we then have
$$\sum\limits_{  \substack{j>j_{0}+2   \\|E_\lambda|<2^j}}  {\frac{2^{{\frac{\alpha-1}{1+\theta_{1}}}j}|E_\lambda|^{\frac{1}{1+\theta_{1}}}}{|\gamma'(2^{j-1})|^{\frac{\theta_{1}}{1+\theta_{1}}}}}\lesssim
\begin{cases}
  |E_\lambda|^{{\frac{\alpha-\theta_1(\omega_{\infty,1}-1-\varepsilon)}{1+\theta_{1}}}},
  & |E_\lambda|\geq2^{j_0+2}|\gamma'(2^{j_{0}+1})|; \\
  |E_\lambda|^{{\frac{\alpha-\theta_1(\omega_{\infty,1}-1-\varepsilon)}{1+\theta_{1}}}},  & 2^{j_0+2}\leq|E_{\lambda}|<2^{j_0+2}|\gamma'(2^{j_{0}+1})|;\\
  |E_\lambda|^{{\frac{1}{1+\theta_1}}}, &|E_{\lambda}|<2^{j_0+2}.
\end{cases}$$
Notice that  $$ |E_\lambda|^{{\frac{\alpha-\theta_1(\omega_{\infty,1}-1-\varepsilon)}{1+\theta_{1}}}}\leq|E_\lambda|^{\frac{\alpha}{(\omega_{\infty,1}-\varepsilon)(1+\theta_{1})}} \leq  |E_\lambda|^{{\frac{\alpha+\theta_1(\omega_{\infty,2}-\omega_{\infty,1}+2\varepsilon)}{(\omega_{\infty,2}+\varepsilon)(1+\theta_{1})}}}$$
holds for $|E_\lambda|\geq2^{j_0+2}$ if $\varepsilon\in(0,\min\{\omega_{\infty,1}, \omega_{\infty,1}-1\})$. Consequently,
\begin{align}\label{eq:4.11}
\Delta_{3}\lesssim
\begin{cases}
   |E_\lambda|^{{\frac{\alpha+\theta_1(\omega_{\infty,2}-\omega_{\infty,1}+2\varepsilon)}{(\omega_{\infty,2}+\varepsilon)(1+\theta_{1})}}}
 \|f\|_{L^{1}(\mathbb{R})}^{\frac{1}{1+\theta_{1}}}\|g\|_{L^{\frac{1}{\theta_1}}(\mathbb{R})}^{\frac{1}{1+\theta_{1}}}, & |E_\lambda|\geq2^{j_0+2};\\
  |E_\lambda|^{{\frac{1}{1+\theta_{1}}}}
\|f\|_{L^{1}(\mathbb{R})}^{\frac{1}{1+\theta_{1}}}\|g\|_{L^{\frac{1}{\theta_1}}(\mathbb{R})}^{\frac{1}{1+\theta_{1}}}, &|E_{\lambda}|<2^{j_0+2}.
\end{cases}
\end{align}

It follows from $\alpha-1+\theta_{1}>0$ and $\theta_1\in (0,1)$ that
$ |E_\lambda|^{{\frac{1}{1+\theta_{1}}}}\leq |E_\lambda|^{\frac{\alpha}{1+\theta_{1}}}\leq |E_\lambda|^{\frac{1-\theta_{1}}{1+\theta_{1}}}$
holds for $|E_{\lambda}|<2^{j_0+2}$. This, combined with \eqref{eq:4.8}-\eqref{eq:4.11}, we conclude that
\begin{align}\label{eq:4.12}
\lambda^{\frac{1}{1+\theta_{1}}}|E_{\lambda}|\lesssim
\begin{cases}
   |E_\lambda|^{\frac{\alpha+\theta_1(\omega_{\infty,2}-\omega_{\infty,1}+2\varepsilon)}{(\omega_{\infty,2}+\varepsilon)(1+\theta_{1})}}
 \|f\|_{L^{1}(\mathbb{R})}^{\frac{1}{1+\theta_{1}}}\|g\|_{L^{\frac{1}{\theta_1}}(\mathbb{R})}^{\frac{1}{1+\theta_{1}}}, & |E_\lambda|\geq2^{j_0+2}; \\
 |E_\lambda|^{\frac{1-\theta_{1}}{1+\theta_{1}}}
\|f\|_{L^{1}(\mathbb{R})}^{\frac{1}{1+\theta_{1}}}\|g\|_{L^{\frac{1}{\theta_1}}(\mathbb{R})}^{\frac{1}{1+\theta_{1}}}, &|E_{\lambda}|<2^{j_0+2}.
\end{cases}
\end{align}
By a calculation, for $\varepsilon$ small enough, we have that $\frac{\omega_{\infty,2}-\omega_{\infty,1}}{\omega_{\infty,2}-1}<\frac{\alpha}{1-\alpha}<\frac{\omega_{\infty,2}}{\omega_{\infty,2}-\omega_{\infty,1}+1}$ implies
\begin{align}\label{eq:4.13}
\max\left\{1-\alpha, \alpha, \frac{\alpha}{\omega_{\infty,1}-\varepsilon}\right\}<\frac{\omega_{\infty,2}-\alpha+\varepsilon}{2\omega_{\infty,2}-\omega_{\infty,1}+3\varepsilon}.
\end{align}
Then, we can take
\begin{align}\label{eq:4.14}
\theta_1\leq \frac{\omega_{\infty,2}-\alpha+\varepsilon}{2\omega_{\infty,2}-\omega_{\infty,1}+3\varepsilon}
\end{align}
with $\varepsilon$ small enough, which further implies that
$$\frac{\alpha+\theta_1(\omega_{\infty,2}-\omega_{\infty,1}+2\varepsilon)}{(\omega_{\infty,2}+\varepsilon)(1+\theta_{1})}\leq \frac{1-\theta_{1}}{1+\theta_{1}},$$
This, together with \eqref{eq:4.12}, gives
\begin{align}\label{eq:4.15}
\lambda|E_{\lambda}|^{\frac{\omega_{\infty,2}+ \theta_1(\omega_{\infty,1}-\varepsilon)-\alpha+\varepsilon}{\omega_{\infty,2}+\varepsilon}}\lesssim \|f\|_{L^{1}(\mathbb{R})}\|g\|_{L^{\frac{1}{\theta_1}}(\mathbb{R})}
\end{align}
for all $0<|E_\lambda|<\infty$, where $\varepsilon$ small enough and $\theta_1$ satisfies \eqref{eq:4.14}. Therefore, we have established the $L^{1}(\mathbb{R})\times L^{\frac{1}{\theta_{1}}}(\mathbb{R})\rightarrow L^{\frac{\omega_{\infty,2}+\varepsilon}{\omega_{\infty,2}+ \theta_1(\omega_{\infty,1}-\varepsilon)-\alpha+\varepsilon}, \infty}(\mathbb{R})$ boundedness for $H_{\alpha,\gamma}$.

It is easy to see that $\frac{\omega_{\infty,2}+ \theta_1(\omega_{\infty,1}-\varepsilon)-\alpha+\varepsilon}{\omega_{\infty,2}+\varepsilon}$ takes the maximal value $\frac{2(\omega_{\infty,2}-\alpha+\varepsilon)}{2\omega_{\infty,2}-\omega_{\infty,1}+3\varepsilon}$ when $\theta_1= \frac{\omega_{\infty,2}-\alpha+\varepsilon}{2\omega_{\infty,2}-\omega_{\infty,1}+3\varepsilon}$. On the other hand, from \eqref{eq:4.13}, it follows $\frac{\alpha}{\omega_{\infty,1}-\varepsilon}<\frac{\omega_{\infty,2}-\alpha+\varepsilon}{2\omega_{\infty,2}-\omega_{\infty,1}+3\varepsilon}$, which equals to $2\alpha+\varepsilon<\omega_{\infty,1}$, we then have $2\alpha<\omega_{\infty,1}$. This, together with $\alpha<1<\omega_{\infty,1}\leq\omega_{\infty,2}$, shows $3\alpha<\omega_{\infty,1}+\omega_{\infty,2}$. Therefore,
$$\frac{\omega_{\infty,2}-\alpha+\varepsilon}{2\omega_{\infty,2}-\omega_{\infty,1}+3\varepsilon}< \frac{\omega_{\infty,2}-\alpha}{2\omega_{\infty,2}-\omega_{\infty,1}}$$
holds for all $\varepsilon$ small enough. Putting things together, we conclude that $H_{\alpha,\gamma}$ maps $L^{1}(\mathbb{R})\times L^{2r( \varepsilon )   }(\mathbb{R})$ to $L^{r(\varepsilon ), \infty}(\mathbb{R})$, where $\frac{1}{r( \varepsilon ) }:=\frac{2(\omega_{\infty,2}-\alpha)}{2\omega_{\infty,2}-\omega_{\infty,1}}-\varepsilon $ with $\varepsilon$ small enough. Therefore, we finish the proof of Proposition \ref{proposition 4.1}.
\end{proof}

\begin{remark}\label{remark 4.2}
In fact, \eqref{eq:4.13} is also used to prove the following Proposition \ref{proposition 5.1}. In the proof of Proposition \ref{proposition 4.1}, it suffices to replace \eqref{eq:4.13} by
\begin{align*}
\max\left\{1-\alpha, \frac{\alpha}{\omega_{\infty,1}-\varepsilon}\right\}<\frac{\omega_{\infty,2}-\alpha+\varepsilon}{2\omega_{\infty,2}-\omega_{\infty,1}+3\varepsilon},
\end{align*}
which leads to the condition $\frac{\alpha}{1-\alpha}<\frac{\omega_{\infty,2}}{\omega_{\infty,2}-\omega_{\infty,1}+1}$ in Theorem \ref{maintheorem} can be replaced as $2\alpha<\omega_{\infty,1}$.
\end{remark}

\begin{proposition}\label{proposition 4.2}
Let $\gamma $ be the same as in Theorem \ref{maintheorem2}. Then, there exists a positive constant $C$, such that
$$\left\|H_{\alpha,\gamma}(f,g)\right\|_{L^{\bar{r}(\varepsilon),\infty}(\mathbb{R})}\leq C \|f\|_{L^{1}(\mathbb{R})}\left\|g\right\|_{L^{\bar{q}(\varepsilon)}(\mathbb{R})},$$
where
\begin{align*}
\begin{cases}
\frac{1}{\bar{r}(\varepsilon )}:= \frac{2(1-\alpha)}{2-\omega_{\infty,1}}-\varepsilon ~~\textrm{and}~~\frac{1}{\bar{q}(\varepsilon )}:=\frac{1}{2\bar{r}(\varepsilon )}   ,&\omega_{\infty,1}\leq2\alpha;\\
\frac{1}{\bar{r}(\varepsilon )}:= 1-\varepsilon  ~~\textrm{and}~~\frac{1}{\bar{q}(\varepsilon )}\in \left[\frac{\alpha}{\omega_{\infty,1}}-\varepsilon, \frac{1}{2}-\varepsilon\right] ,& 2\alpha<\omega_{\infty,1},
\end{cases}
\end{align*}
with $\varepsilon>0$ small enough.
\end{proposition}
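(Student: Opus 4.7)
The plan is to mimic the proof of Proposition~\ref{proposition 4.1} \emph{mutatis mutandis}: retain the dyadic decomposition $H_{\alpha,\gamma}(f,g) \lesssim \sum_{j\in\mathbb{Z}} 2^{(\alpha-1)j} H_{\alpha,\gamma,j}(f,g)$, reuse the per-piece estimates \eqref{eq:4.2}--\eqref{eq:4.7} (whose derivation does not use the inequality $\frac{\alpha}{1-\alpha}<\omega_{\infty,1}$), apply Chebyshev's inequality on $E_\lambda$ to obtain the same splitting $\lambda^{1/(1+\theta_1)}|E_\lambda| \lesssim \Delta_1+\Delta_2+\Delta_3$ as in \eqref{eq:4.8}, and invoke the bounds \eqref{eq:4.9}--\eqref{eq:4.11}. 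Under the standing hypothesis $\frac{\alpha}{1-\alpha}\ge\omega_{\infty,1}>1$ one has $\alpha>\tfrac{1}{2}$, so the condition $\alpha-1+\theta_1>0$ used in deriving \eqref{eq:4.11} remains available for every $\theta_1\ge\tfrac{1}{2}$. Only the choice of $\theta_1$ and the identification of the dominant term in $\Delta_3$ differ from Proposition~\ref{proposition 4.1}.

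For Case~1 ($\omega_{\infty,1}\le 2\alpha$), set $\theta_1=\frac{1-\alpha}{2-\omega_{\infty,1}}-c\varepsilon$ with a suitably small $c>0$; since $1<\omega_{\infty,1}\le 2\alpha<2$, this $\theta_1$ lies in $(\tfrac{1}{2},1)$. The new feature is that the convergence condition $\alpha<\theta_1(\omega_{\infty,1}-\varepsilon)$ for the $\Delta_{3,2}$ sum now fails (it is equivalent to $\omega_{\infty,1}>2\alpha$), but this is harmless: the sum is over a finite range of $j$ and is dominated by $j\approx\log_2|E_\lambda|$, contributing the exponent $\frac{\alpha+\theta_1(1-\omega_{\infty,1})}{1+\theta_1}$. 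A direct algebraic computation shows this exponent coincides with $\frac{\alpha-\theta_1(\omega_{\infty,1}-1)}{1+\theta_1}$ from $\Delta_{3,3}$, and both equal $\frac{1-\theta_1}{1+\theta_1}$ at the chosen $\theta_1$; the exponent $\frac{\alpha}{(\omega_{\infty,1}-\varepsilon)(1+\theta_1)}$ from $\Delta_{3,1}$ is no larger than this common value precisely because $\omega_{\infty,1}\le 2\alpha$. Combining as in \eqref{eq:4.12}--\eqref{eq:4.15} then yields $\lambda|E_\lambda|^{2\theta_1}\lesssim\|f\|_{L^1(\mathbb{R})}\|g\|_{L^{1/\theta_1}(\mathbb{R})}^{1+\theta_1}$, and $2\theta_1=\frac{2(1-\alpha)}{2-\omega_{\infty,1}}-2c\varepsilon$ delivers the asserted $\frac{1}{\bar{r}(\varepsilon)}$.

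For Case~2 ($2\alpha<\omega_{\infty,1}$), the target $\frac{1}{\bar{r}(\varepsilon)}=1-\varepsilon$ is independent of $\theta_1$, so one picks $\theta_1=\frac{1}{\bar{q}(\varepsilon)}$ anywhere in $[\alpha/\omega_{\infty,1}-\varepsilon,\,\tfrac{1}{2}-\varepsilon]$. For $\theta_1\ge\alpha/\omega_{\infty,1}$ a direct check using $2\alpha<\omega_{\infty,1}\le\omega_{\infty,2}$ shows that the three exponents produced by $\Delta_{3,1},\Delta_{3,2},\Delta_{3,3}$ are all bounded by $\frac{\theta_1}{1+\theta_1}$, with simultaneous equality at $\theta_1=\alpha/\omega_{\infty,1}$ (the vertex $G_2^1$) and strict inequality at $\theta_1=\tfrac{1}{2}$ (the vertex $G_2^2$, approached but not attained). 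The residual $\varepsilon$-gap in $\frac{1}{\bar{r}(\varepsilon)}=1-\varepsilon$ absorbs both the $\varepsilon$-perturbations inside Lemma~\ref{lemma 2.1} and the logarithmic blow-up of $\Delta_{3,1}$ at the critical value, which forces us to take $\theta_1=\alpha/\omega_{\infty,1}-\varepsilon$ strictly below that value at the left endpoint.

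The principal obstacle is the bookkeeping: for each choice of $\theta_1$ in each case, one must identify the largest of the four relevant $|E_\lambda|$-exponents (one from each of $\Delta_{3,1},\Delta_{3,2},\Delta_{3,3}$, and one from the small-$|E_\lambda|$ contribution of $\Delta_1,\Delta_2$) and verify that this maximum matches the target exponent $\frac{1-\theta_1}{1+\theta_1}$ in Case~1 or is dominated by $\frac{\theta_1}{1+\theta_1}$ in Case~2. In Case~1 the critical inequality is the replacement of \eqref{eq:4.13} by the defining constraint $\omega_{\infty,1}\le 2\alpha$ itself; in Case~2 the binding constraint simply switches from $\Delta_{3,2}$ (as in Proposition~\ref{proposition 4.1}) to $\Delta_{3,1}$. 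Once these comparisons are in place, the $L^1(\mathbb{R})\times L^{\bar{q}(\varepsilon)}(\mathbb{R})\to L^{\bar{r}(\varepsilon),\infty}(\mathbb{R})$ boundedness follows exactly as in the final paragraph of the proof of Proposition~\ref{proposition 4.1}.
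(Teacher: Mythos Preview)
Your proposal is essentially correct and follows the same strategy as the paper's own proof. The key observation---that in the regime $\frac{\alpha}{1-\alpha}\geq\omega_{\infty,1}$ one works with $\theta_1$ satisfying $\alpha>\theta_1(\omega_{\infty,1}-\varepsilon)$, so the $\Delta_{3,2}$ sum is now dominated by its upper endpoint $2^j\approx|E_\lambda|$ and yields the exponent $\frac{\alpha-\theta_1(\omega_{\infty,1}-1-\varepsilon)}{1+\theta_1}$---is exactly what the paper does. The paper states this as ``we take $\alpha>\theta_1(\omega_{\infty,1}-\varepsilon)$'' and re-computes $\Delta_{3,2}$ directly; you phrase it as ``the convergence condition fails but the finite sum is dominated by the boundary term,'' which is the same computation.

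One correction: in Case~1 you assert $\theta_1=\frac{1-\alpha}{2-\omega_{\infty,1}}\in(\tfrac12,1)$, but in fact $\omega_{\infty,1}\leq 2\alpha$ gives $2-\omega_{\infty,1}\geq 2(1-\alpha)$ and hence $\theta_1\leq\tfrac12$. This does not damage the argument---the needed inequality $\alpha-1+\theta_1>0$ still holds because $\theta_1>1-\alpha$ (from $2-\omega_{\infty,1}<1$)---but the stated range is wrong. Also, the displayed bound should read $\lambda|E_\lambda|^{2\theta_1}\lesssim\|f\|_{L^1}\|g\|_{L^{1/\theta_1}}$ with no exponent on $\|g\|$.

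In Case~2 your parameterization differs slightly from the paper's: the paper keeps $\theta_1<\frac{\alpha}{\omega_{\infty,1}-\varepsilon}$ throughout (so always in the reversed-convergence regime) and reaches the vertex $G_2^2$ by letting the auxiliary $\varepsilon$ grow to $\omega_{\infty,1}-2\alpha$ so that $\frac{\alpha}{\omega_{\infty,1}-\varepsilon}\to\tfrac12$, whereas you allow $\theta_1$ to range directly over $[\alpha/\omega_{\infty,1}-\varepsilon,\tfrac12-\varepsilon]$ and bound all three $\Delta_{3,i}$ exponents by $\frac{\theta_1}{1+\theta_1}$. Both routes deliver the claimed interval of $\frac{1}{\bar q(\varepsilon)}$; yours is arguably cleaner, though you should note that for $\theta_1>\alpha/(\omega_{\infty,1}-\varepsilon)$ you are back in the Proposition~\ref{proposition 4.1} convergence regime and can quote \eqref{eq:4.11} verbatim.
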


\begin{proof}
The main difference here lies in the establishing of $\Delta_{3,2}$. In fact, let $\varepsilon\in(0,\omega_{\infty,1})$, and we take $\alpha>\theta_{1}(\omega_{\infty,1}-\varepsilon)$. By Lemma \ref{lemma 2.1}, one immediately has
$$\sum\limits_{  \substack{j>j_{0}+2   \\2^j\leq|E_\lambda|<2^j|\gamma'(2^{j-1})|  }}   {\frac{2^{{\frac{\alpha-\theta_{1}}{1+\theta_{1}}}j}|E_\lambda|^{\frac{\theta_{1}}{1+\theta_{1}}}}{|\gamma'(2^{j-1})|^{\frac{\theta_{1}}{1+\theta_{1}}}}} \lesssim
\begin{cases}
  |E_\lambda|^{{\frac{\alpha-\theta_1(\omega_{\infty,1}-1-\varepsilon)}{1+\theta_1}}}, & |E_\lambda|\geq2^{j_0+2}|\gamma'(2^{j_{0}+1})|;\\
   |E_\lambda|^{{\frac{\alpha-\theta_1(\omega_{\infty,1}-1-\varepsilon)}{1+\theta_1}}}, & 2^{j_0+2}\leq|E_{\lambda}|<2^{j_0+2}|\gamma'(2^{j_{0}+1})|;\\
 0, &|E_{\lambda}|<2^{j_0+2}.
\end{cases}$$
The estimation of $\Delta_{3,3}$ is the same as in Proposition \ref{proposition 4.1}, and we also need to take $\alpha-1+\theta_{1}>0$, which can be realized by the condition that $\frac{\alpha}{1-\alpha}\geq \omega_{\infty,1}>1$. Notice that $\alpha>\theta_{1}(\omega_{\infty,1}-\varepsilon)$, which implies $|E_\lambda|^{\frac{\alpha}{(\omega_{\infty,1}-\varepsilon)(1+\theta_{1})}}\leq|E_\lambda|^{{\frac{\alpha-\theta_1(\omega_{\infty,1}-1-\varepsilon)}{1+\theta_1}}}$ for $|E_\lambda|\geq2^{j_0+2}$. Thus one can write
\begin{align*}
\Delta_{3}\lesssim
\begin{cases}
  |E_\lambda|^{{\frac{\alpha-\theta_1(\omega_{\infty,1}-1-\varepsilon)}{1+\theta_1}}}
 \|f\|_{L^{1}(\mathbb{R})}^{\frac{1}{1+\theta_{1}}}\|g\|_{L^{\frac{1}{\theta_1}}(\mathbb{R})}^{\frac{1}{1+\theta_{1}}}, & |E_\lambda|\geq2^{j_0+2}; \\
  |E_\lambda|^{{\frac{1}{1+\theta_{1}}}}
\|f\|_{L^{1}(\mathbb{R})}^{\frac{1}{1+\theta_{1}}}\|g\|_{L^{\frac{1}{\theta_1}}(\mathbb{R})}^{\frac{1}{1+\theta_{1}}}, &|E_{\lambda}|<2^{j_0+2}.
\end{cases}
\end{align*}
As in Proposition \ref{proposition 4.1}, we have an estimate
\begin{align}\label{eq:4.19}
\lambda^{\frac{1}{1+\theta_{1}}}|E_{\lambda}|\lesssim
\begin{cases}
   |E_\lambda|^{{\frac{\alpha-\theta_1(\omega_{\infty,1}-1-\varepsilon)}{1+\theta_1}}}
 \|f\|_{L^{1}(\mathbb{R})}^{\frac{1}{1+\theta_{1}}}\|g\|_{L^{\frac{1}{\theta_1}}(\mathbb{R})}^{\frac{1}{1+\theta_{1}}}, & |E_\lambda|\geq2^{j_0+2}; \\
 |E_\lambda|^{\frac{1-\theta_{1}}{1+\theta_{1}}}
\|f\|_{L^{1}(\mathbb{R})}^{\frac{1}{1+\theta_{1}}}\|g\|_{L^{\frac{1}{\theta_1}}(\mathbb{R})}^{\frac{1}{1+\theta_{1}}}, &|E_{\lambda}|<2^{j_0+2}.
\end{cases}
\end{align}

We now want to obtain that
\begin{align}\label{eq:4.20}
\frac{\alpha-\theta_1(\omega_{\infty,1}-1-\varepsilon)}{1+\theta_1}\leq \frac{1-\theta_{1}}{1+\theta_{1}},
\end{align}
which equals to $\theta_1(2-\omega_{\infty,1}+\varepsilon)\leq1-\alpha$. If $2-\omega_{\infty,1}+\varepsilon\leq0$, \eqref{eq:4.20} is obvious. If $2-\omega_{\infty,1}+\varepsilon>0$, \eqref{eq:4.20} equals to
\begin{align}\label{eq:4.21}
\theta_1\leq\frac{1-\alpha}{2-\omega_{\infty,1}+\varepsilon}.
\end{align}
Clearly $1-\alpha\leq\frac{1-\alpha}{2-\omega_{\infty,1}+\varepsilon}$ holds for all $\varepsilon\in(0,\omega_{\infty,1}-1]$, which shows \eqref{eq:4.21} can be realized. This, combined with \eqref{eq:4.19} and \eqref{eq:4.20}, means that
\begin{align*}
\lambda|E_{\lambda}|^{1-\alpha+\theta_1(\omega_{\infty,1}-\varepsilon)}\lesssim\|f\|_{L^{1}(\mathbb{R})}\|g\|_{L^{\frac{1}{\theta_1}}(\mathbb{R})}
\end{align*}
for all $0<|E_\lambda|<\infty$, where
\begin{align}\label{eq:4.22}
\begin{cases}
\theta_1\leq\frac{1-\alpha}{2-\omega_{\infty,1}+\varepsilon}, &\omega_{\infty,1}<2\alpha+\varepsilon;\\
 \theta_1< \frac{\alpha}{\omega_{\infty,1}-\varepsilon}, & 2\alpha+\varepsilon\leq\omega_{\infty,1},
\end{cases}
\end{align}
and $\varepsilon>0$ small enough. Therefore, we have established the $L^{1}(\mathbb{R})\times L^{\frac{1}{\theta_{1}}}(\mathbb{R})\rightarrow L^{\frac{1}{1-\alpha+\theta_1(\omega_{\infty,1}-\varepsilon)}, \infty}(\mathbb{R})$ boundedness for $H_{\alpha,\gamma}$.

On the other hand, by \eqref{eq:4.22}, for $\varepsilon>0$ small enough, one easily sees that
\begin{align*}
\begin{cases}
1-\alpha+\theta_1(\omega_{\infty,1}-\varepsilon)\leq\frac{2(1-\alpha)}{2-\omega_{\infty,1}+\varepsilon}<\frac{2(1-\alpha)}{2-\omega_{\infty,1}}, &\omega_{\infty,1}<2\alpha+\varepsilon;\\
 1-\alpha+\theta_1(\omega_{\infty,1}-\varepsilon)< 1, & 2\alpha+\varepsilon\leq\omega_{\infty,1}.
\end{cases}
\end{align*}
Therefore, we conclude that $H_{\alpha,\gamma}$ maps $L^{1}(\mathbb{R})\times L^{\bar{q}( \varepsilon )   }(\mathbb{R})$ to $L^{\bar{r}(\varepsilon ), \infty}(\mathbb{R})$ with $\varepsilon>0$ small enough. Here, $\frac{1}{\bar{r}( \varepsilon )}$ tends to maximal value as $\varepsilon\rightarrow 0$.

Notice that $\frac{\alpha}{\omega_{\infty,1}-\varepsilon}$ increases as $\varepsilon$ increases when $2\alpha+\varepsilon\leq\omega_{\infty,1}$, then $\frac{1}{\bar{q}( \varepsilon )}$ doesn't tend to maximal value as $\varepsilon\rightarrow 0$. We now want $\frac{1}{\bar{q}( \varepsilon )}$ to take maximal value. In fact, from the process of our proofs, we can find that $\varepsilon$ need only satisfies $\varepsilon\leq\omega_{\infty,1}-2\alpha$ and $\varepsilon\leq\omega_{\infty,1}-1$ when $\omega_{\infty,1}>2\alpha$. Moreover, using $2\alpha<\omega_{\infty,1}\leq \frac{\alpha}{1-\alpha}$, we have $\frac{1}{2}<\alpha$. Therefore, we should only to take $\varepsilon\leq\omega_{\infty,1}-2\alpha$. It is easy to check that $\frac{1}{\bar{q}( \varepsilon )}$ tends to the maximal value $\frac{1}{2}$ when $\varepsilon=\omega_{\infty,1}-2\alpha$, and $\frac{1}{\bar{r}(\varepsilon )}$ tends to $1$. Then, $H_{\alpha,\gamma}$ maps $L^{1}(\mathbb{R})\times L^{\frac{2}{1-\varepsilon}}(\mathbb{R})$ to $L^{\frac{1}{1-\varepsilon}, \infty}(\mathbb{R})$ with $\varepsilon>0$ small enough when $\omega_{\infty,1}>2\alpha$.

Putting things together, we finish the proof of Proposition \ref{proposition 4.2}.
\end{proof}

\section{Estimates for $H_{\alpha,\gamma}$ with $g\in L^1(\mathbb{R})$}

In this section, we will establish some $L^p(\mathbb{R})\times L^q(\mathbb{R}) \rightarrow L^{r,\infty}(\mathbb{R})$ estimates for $H_{\alpha,\gamma}$ with $q=1$ by considering the corresponding estimates for $H_{\alpha,\gamma,j}$.

\begin{proposition}\label{proposition 5.1}
Let $\gamma $ be the same as in Theorem \ref{maintheorem}. Then, there exists a positive constant $C$, such that
$$\left\|H_{\alpha,\gamma}(f,g)\right\|_{L^{r(\varepsilon),\infty}(\mathbb{R})}\leq C \|f\|_{L^{2r(\varepsilon)}(\mathbb{R})}\left\|g\right\|_{L^{1}(\mathbb{R})},$$
where $\frac{1}{r( \varepsilon ) }=\frac{2(\omega_{\infty,2}-\alpha)}{2\omega_{\infty,2}-\omega_{\infty,1}}-\varepsilon $ with $\varepsilon>0$ small enough as in Proposition \ref{proposition 4.1}.
\end{proposition}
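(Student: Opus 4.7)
The proof follows the structure of Proposition \ref{proposition 4.1}, but with the roles of $f\in L^{\infty}$ and $g\in L^{1}$ exchanged. The main new complication is that the trivial $L^{\infty}\times L^{1}\to L^{1}$ bound for $H_{\alpha,\gamma,j}(f,g)\chi_{E}$ carries the factor $|\gamma'(2^{j-1})|^{-1}$ on the side of $|E|$, not on the side of $2^{j}$ as in \eqref{eq:4.2}. Specifically, by Fubini's theorem (using $\int g(x-\gamma(t))\,dx=\|g\|_{L^{1}}$) on the one hand, and the substitution $s=\gamma(t)$ (justified by the monotonicity of $|\gamma'|$ coming from \eqref{eq:1.1}) on the other, I would first establish
\begin{align*}
\left\|H_{\alpha,\gamma,j}(f,g)\chi_{E}\right\|_{L^{1}(\mathbb{R})}\lesssim \min\left\{2^{j},\frac{|E|}{|\gamma'(2^{j-1})|}\right\}\|f\|_{L^{\infty}(\mathbb{R})}\|g\|_{L^{1}(\mathbb{R})}
\end{align*}
for $j>j_{0}+2$, and $\lesssim 2^{j}\|f\|_{L^{\infty}}\|g\|_{L^{1}}$ for the remaining scales. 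Since Propositions \ref{proposition 3.1} and \ref{proposition 3.2} are symmetric in $f$ and $g$, the $L^{1}\times L^{1}\to L^{1/2}$ bounds \eqref{eq:4.3} and \eqref{eq:4.4} carry over unchanged.

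Interpolating these two families in the $f$-variable with a parameter $\theta_{2}\in(0,1)$ produces an estimate of the form $\|H_{\alpha,\gamma,j}(f,g)\chi_{E}\|_{L^{1/(1+\theta_{2})}}\lesssim M_{j}(|E|)\,\|f\|_{L^{1/\theta_{2}}}\|g\|_{L^{1}}$, where $M_{j}(|E|)$ is a minimum of several combinations of $2^{j}$, $|E|$ and $|\gamma'(2^{j-1})|^{-\theta_{2}}$, analogous to \eqref{eq:4.5}--\eqref{eq:4.6}. Chebyshev's inequality applied to $E_{\lambda}:=\{|H_{\alpha,\gamma}(f,g)|>\lambda\}$, together with \eqref{eq:4.1}, reduces the proposition to summing $2^{(\alpha-1)j/(1+\theta_{2})}M_{j}(|E_{\lambda}|)^{1/(1+\theta_{2})}$ over $j$, which I would organize into three pieces $\widetilde{\Delta}_{1},\widetilde{\Delta}_{2},\widetilde{\Delta}_{3}$ corresponding to $j<j_{0}$, $j\in\{j_{0},j_{0}+1,j_{0}+2\}$, and $j>j_{0}+2$; the first two are treated essentially as in Proposition \ref{proposition 4.1}.

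The main obstacle lies in controlling $\widetilde{\Delta}_{3}$, whose dominant term — the quantity denoted (5.a) in the introduction — cannot be bounded by a single power of $|E_{\lambda}|$ because the $|\gamma'(2^{j-1})|^{-\theta_{2}}$ factor has moved the critical scale away from $|E_{\lambda}|\approx 2^{j}|\gamma'(2^{j-1})|$. To resolve this I would perform a dichotomy on whether $|E_{\lambda}|\ge 2^{j_{0}}|\gamma'(2^{j_{0}-1})|$ or $|E_{\lambda}|<2^{j_{0}}|\gamma'(2^{j_{0}-1})|$, corresponding to the bounds (5.b) and (5.000) announced in the introduction: in the first regime the geometric sum over scales past $j_{0}$ dominates and converges after using Lemma \ref{lemma 2.1} to replace $|\gamma'(2^{j-1})|$ by a suitable power of $2^{j}$ up to an $\varepsilon$-loss, while in the second regime only finitely many scales near $j_{0}$ contribute and are summed directly. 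Both regimes lead to an estimate of the form
\begin{align*}
\lambda|E_{\lambda}|^{(\omega_{\infty,2}+\theta_{2}(\omega_{\infty,1}-\varepsilon)-\alpha+\varepsilon)/(\omega_{\infty,2}+\varepsilon)}\lesssim\|f\|_{L^{1/\theta_{2}}(\mathbb{R})}\|g\|_{L^{1}(\mathbb{R})},
\end{align*}
and choosing $\theta_{2}=(\omega_{\infty,2}-\alpha+\varepsilon)/(2\omega_{\infty,2}-\omega_{\infty,1}+3\varepsilon)$ as in Proposition \ref{proposition 4.1} (admissible under the condition $2\alpha<\omega_{\infty,1}$ noted in Remark \ref{remark 4.2}) maximizes the exponent of $|E_{\lambda}|$ and delivers the stated $L^{2r(\varepsilon)}\times L^{1}\to L^{r(\varepsilon),\infty}$ restricted weak type bound.
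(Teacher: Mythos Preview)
Your proposal follows the paper's strategy and arrives at the correct final weak-type estimate and the correct optimization $\theta_{2}=(\omega_{\infty,2}-\alpha+\varepsilon)/(2\omega_{\infty,2}-\omega_{\infty,1}+3\varepsilon)$. Two points where your sketch diverges from the paper's actual execution are worth flagging.

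First, your reading of \eqref{eq:5.b} and \eqref{eq:5.000} as the two halves of a dichotomy within this proof is a misinterpretation: the paper uses only \eqref{eq:5.b} throughout Proposition~\ref{proposition 5.1} (for \emph{both} ranges of $|E_{\lambda}|$), while \eqref{eq:5.000} is reserved for Proposition~\ref{proposition 5.2}. The dichotomy on $|E_{\lambda}|\gtrless 2^{j_{0}}|\gamma'(2^{j_{0}-1})|$ does appear, but it merely records where the two branches of the minimum in \eqref{eq:5.b} switch. In the small-$|E_{\lambda}|$ regime the sum over $j>j_{0}+2$ is still an infinite geometric series (not ``finitely many scales near $j_{0}$''), converging under the constraint $\alpha<\theta_{2}(\omega_{\infty,1}-\varepsilon)$ to the contribution $|E_{\lambda}|^{\theta_{2}/(1+\theta_{2})}$.

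Second, the piece for $j<j_{0}$ is \emph{not} treated as in Proposition~\ref{proposition 4.1}. The paper's $L^{\infty}\times L^{1}$ bound $\min\{2^{j},|E|/|\gamma'(2^{j-1})|\}$ in \eqref{eq:5.1} holds for all $j$, and interpolation with \eqref{eq:4.3} yields a four-term minimum for $j<j_{0}$, leading to a three-subcase analysis of $\Upsilon_{1}$ that requires both $\alpha<\theta_{2}$ and $1-\alpha<\theta_{2}$ (hence the full condition \eqref{eq:4.13}, not only $2\alpha<\omega_{\infty,1}$). Your weaker starting estimate for small $j$ would in fact give a shorter computation of this piece with the same outcome, but you should make explicit that all three lower bounds on $\theta_{2}$ in \eqref{eq:4.13} are used to reach \eqref{eq:5.6}.
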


\begin{proof}
Let $E$ be a measurable set with finite measure, and recall that $H_{\alpha,\gamma,j}$ defined in \eqref{eq:3.1} with $j\in \mathbb{Z}$, by the fact that $|\gamma'(t)|$ is strictly increasing on $t\in\mathbb{R}^+$, a routine calculation gives
\begin{align*}
  \left\|H_{\alpha,\gamma,j}(f,g)\chi_{E}\right\|_{L^{1}(\mathbb{R})}
\leq\|f\|_{L^{\infty}(\mathbb{R})}\int_E\int_{2^{j-1}}^{2^j}g(x-\gamma(t))\,\textrm{d}t\,\textrm{d}x\leq \frac{|E|}{|\gamma'(2^{j-1})|} \|f\|_{L^{\infty}(\mathbb{R})}\|g\|_{L^{1}(\mathbb{R})}.
 \end{align*}
Combining Propositions \ref{proposition 3.1} and \ref{proposition 3.2}, for any $j\in \mathbb{Z}$, we immediately have
\begin{align}\label{eq:5.1}
  \left\|H_{\alpha,\gamma,j}(f,g)\chi_{E}\right\|_{L^{1}(\mathbb{R})}
\leq \min\left\{2^j,  \frac{|E|}{|\gamma'(2^{j-1})|} \right\} \|f\|_{L^{\infty}(\mathbb{R})}\|g\|_{L^{1}(\mathbb{R})}.
 \end{align}
Notice that $H_{\alpha,\gamma,j}(f,g)\chi_{E}$ can be viewed as a linear operator about $f$ for any fixed $g\in L^1(\mathbb{R})$, by interpolation between \eqref{eq:5.1} with \eqref{eq:4.3}, we conclude that
 \begin{align*}
  \left\|H_{\alpha,\gamma,j}(f,g)\chi_{E}\right\|_{L^{\frac{1}{1+\theta_2}}(\mathbb{R})} \lesssim \min\left\{2^j,2^{j(1-\theta_{2})}|E|^{\theta_{2}},{\frac{2^{j\theta_2}|E|^{1-\theta_{2}}}{|\gamma'(2^{j-1})|^{1-\theta_{2}}}} ,{\frac{|E|}{|\gamma'(2^{j-1})|^{1-\theta_{2}}}}\right\}\|f\|_{L^{\frac{1}{\theta_2}}(\mathbb{R})}\|g\|_{L^{1}(\mathbb{R})}
 \end{align*}
for all $j<j_0$ with $\theta_2\in (0,1)$. From Proposition \ref{proposition 3.2}, reiterating this interpolation process, then the following estimate is valid:
\begin{align*}
  \left\|H_{\alpha,\gamma,j}(f,g)\chi_{E}\right\|_{L^{\frac{1}{1+\theta_1}}(\mathbb{R})}\lesssim \min\left\{2^{j_0},\frac{2^{{j_0}\theta_{2}}|E|^{1-\theta_{2}}}{ |\gamma'(2^{j_0-1})|^{1-\theta_{2}} }\right\}\|f\|_{L^{\frac{1}{\theta_2}}(\mathbb{R})}\|g\|_{L^{1}(\mathbb{R})}
  \end{align*}
for all $j=j_0, j_0+1, j_0+2$. A further interpolation between \eqref{eq:5.1} with \eqref{eq:4.4} gives
\begin{align}\label{eq:5.a}
  \left\|H_{\alpha,\gamma,j}(f,g)\chi_{E}\right\|_{L^{\frac{1}{1+\theta_2}}(\mathbb{R})}
\leq \min\left\{2^j,  \frac{|E|}{|\gamma'(2^{j-1})|} \right\} \|f\|_{L^{\frac{1}{\theta_2}}(\mathbb{R})}\|g\|_{L^{1}(\mathbb{R})}
 \end{align}
for all $j>j_0+2$. Moreover, in order to establish the desired estimate, we bound
\begin{align}\label{eq:5.b}
  \left\|H_{\alpha,\gamma,j}(f,g)\chi_{E}\right\|_{L^{\frac{1}{1+\theta_2}}(\mathbb{R})}
\leq \min\left\{2^j,  \frac{2^{j(1-\theta_2)}|E|^{\theta_2}}{|\gamma'(2^{j-1})|^{\theta_2}} \right\} \|f\|_{L^{\frac{1}{\theta_2}}(\mathbb{R})}\|g\|_{L^{1}(\mathbb{R})}
 \end{align}
for all $j>j_0+2$.

Recall that $E_{\lambda}=\{x\in{\mathbb{R}}:\ |H_{\alpha,\gamma}(f,g)(x)|>\lambda\}$. Then, by a similar argument as in \eqref{eq:4.8}, we conclude that
\begin{align}\label{eq:5.2}
 \lambda^{\frac{1}{1+\theta_2}}|E_{\lambda}|\lesssim \Upsilon_{1}+\Upsilon_{2}+\Upsilon_{3},
  \end{align}
where
$$\begin{cases}
   \Upsilon_{1}:=\sum\limits_{j<j_{0}}2^{{\frac{\alpha-1}{1+\theta_{2}}j}}\min\left\{2^j,2^{j(1-\theta_{2})}|E_{\lambda}|^{\theta_{2}},{\frac{2^{j\theta_2}|E_{\lambda}|^{1-\theta_{2}}}{|\gamma'(2^{j-1})|^{1-\theta_{2}}}} ,{\frac{|E_{\lambda}|}{|\gamma'(2^{j-1})|^{1-\theta_{2}}}}\right\}^{\frac{1}{1+\theta_2}}\|f\|^{\frac{1}{1+\theta_2}}_{L^{\frac{1}{\theta_2}}(\mathbb{R})}\|g\|^{\frac{1}{1+\theta_2}}_{L^{1}(\mathbb{R})}; \\
  \Upsilon_{2}:=2^{{\frac{\alpha-1}{1+\theta_{2}}}j_{0}}\min\left\{2^{j_0},\frac{2^{{j_0}\theta_{2}}|E_{\lambda}|^{1-\theta_{2}}}{ |\gamma'(2^{j_0-1})|^{1-\theta_{2}} }\right\}^{\frac{1}{1+\theta_2}}\|f\|^{\frac{1}{1+\theta_2}}_{L^{\frac{1}{\theta_2}}(\mathbb{R})}\|g\|^{\frac{1}{1+\theta_2}}_{L^{1}(\mathbb{R})};\\
 \Upsilon_{3}:=\sum\limits_{j>j_{0}+2}2^{{\frac{\alpha-1}{1+\theta_{2}}}j}\min\left\{2^j,  \frac{2^{j(1-\theta_2)}|E_{\lambda}|^{\theta_2}}{|\gamma'(2^{j-1})|^{\theta_2}} \right\}^{\frac{1}{1+\theta_2}} \|f\|^{\frac{1}{1+\theta_2}}_{L^{\frac{1}{\theta_2}}(\mathbb{R})}\|g\|^{\frac{1}{1+\theta_2}}_{L^{1}(\mathbb{R})}.
 \end{cases}$$

Consider $\Upsilon_1$, in analogy to the calculation in $\Delta_{3}$, and observe that
\begin{align*}
 \min\left\{2^j,2^{j(1-\theta_{2})}|E_{\lambda}|^{\theta_{2}},{\frac{2^{j\theta_2}|E_{\lambda}|^{1-\theta_{2}}}{|\gamma'(2^{j-1})|^{1-\theta_{2}}}} ,{\frac{|E_{\lambda}|}{|\gamma'(2^{j-1})|^{1-\theta_{2}}}}\right\}\leq
 \begin{cases}
   2^j, &  |E_{\lambda}|\geq 2^j; \\
    2^{j(1-\theta_{2})}|E_{\lambda}|^{\theta_{2}}, &  2^j{|\gamma'(2^{j-1})|}\leq|E_{\lambda}|<  2^j; \\
   {\frac{|E_{\lambda}|}{|\gamma'(2^{j-1})|^{1-\theta_{2}}}}, & |E_{\lambda}|<2^j|\gamma'(2^{j-1})|.
 \end{cases}
 \end{align*}
Hence we can write
\begin{align*}
 \Upsilon_{1}=\Upsilon_{1,1}+\Upsilon_{1,2}+\Upsilon_{1,3}
 \end{align*}
with
$$\begin{cases}
   \Upsilon_{1,1}:=\sum\limits_{  \substack{j<j_{0}   \\ |E_{\lambda}|\geq2^j }}
   2^{{\frac{\alpha}{1+\theta_{2}}j}}\|f\|^{\frac{1}{1+\theta_2}}_{L^{\frac{1}{\theta_2}}(\mathbb{R})}\|g\|^{\frac{1}{1+\theta_2}}_{L^{1}(\mathbb{R})}; \\
  \Upsilon_{1,2}:=\sum\limits_{  \substack{j<j_{0}   \\ 2^j{|\gamma'(2^{j-1})|}\leq|E_{\lambda}|<2^j }} 2^{{\frac{\alpha-\theta_{2}}{1+\theta_{2}}}j}|E_\lambda|^{\frac{\theta_{2}}{1+\theta_{2}}}\|f\|^{\frac{1}{1+\theta_2}}_{L^{\frac{1}{\theta_2}}(\mathbb{R})}\|g\|^{\frac{1}{1+\theta_2}}_{L^{1}(\mathbb{R})}; \\
 \Upsilon_{1,3}:=\sum\limits_{  \substack{j<j_{0}   \\ |E_{\lambda}|<2^j|\gamma'(2^{j-1})| }} \frac{2^{{\frac{\alpha-1}{1+\theta_{2}}j}}|E_\lambda|^{\frac{1}{1+\theta_{2}}}}{|\gamma'(2^{j-1})|^{\frac{1-\theta_{2}}{1+\theta_{2}}}}\|f\|^{\frac{1}{1+\theta_2}}_{L^{\frac{1}{\theta_2}}(\mathbb{R})}\|g\|^{\frac{1}{1+\theta_2}}_{L^{1}(\mathbb{R})}.
 \end{cases}$$
Let's now consider $\Upsilon_{1,1}$. For any $\varepsilon>0$, it follows from Lemma \ref{lemma 2.1} that
$$\sum\limits_{  \substack{j<j_{0}   \\ |E_{\lambda}|\geq2^j }}
   2^{{\frac{\alpha}{1+\theta_{2}}j}}\lesssim
\begin{cases}
  2^{\frac{\alpha}{1+\theta_{2}}j_0}, & |E_\lambda|\geq2^{j_0}; \\
  |E_\lambda|^{\frac{\alpha}{1+\theta_{2}}}, & 2^{j_0}|\gamma'(2^{j_{0}-1})|\leq|E_{\lambda}|<2^{j_0};\\
  |E_\lambda|^{\frac{\alpha}{1+\theta_{2}}}, &|E_{\lambda}|<2^{j_0}|\gamma'(2^{j_{0}-1})|.
\end{cases}$$
To see what happens corresponding to $\Upsilon_{1,2}$ and $\Upsilon_{1,3}$, due to a minor technical issue we will assume that $\alpha<\theta_2$. For any $\varepsilon>0$, by Lemma \ref{lemma 2.1}, it is easy to see that
$$\sum\limits_{  \substack{j<j_{0}   \\ 2^j{|\gamma'(2^{j-1})|}\leq|E_{\lambda}|<2^j }} 2^{{\frac{\alpha-\theta_{2}}{1+\theta_{2}}}j}|E_\lambda|^{\frac{\theta_{2}}{1+\theta_{2}}}\lesssim
\begin{cases}
  0, & |E_\lambda|\geq2^{j_0}; \\
  |E_\lambda|^{\frac{\alpha}{1+\theta_{2}}}, & 2^{j_0}|\gamma'(2^{j_{0}-1})|\leq|E_{\lambda}|<2^{j_0};\\
  |E_\lambda|^{\frac{\alpha}{1+\theta_{2}}}, &|E_{\lambda}|<2^{j_0}|\gamma'(2^{j_{0}-1})|,
\end{cases}$$
and
$$\sum\limits_{  \substack{j<j_{0}   \\ |E_{\lambda}|<2^j|\gamma'(2^{j-1})| }} \frac{2^{{\frac{\alpha-1}{1+\theta_{2}}j}}|E_\lambda|^{\frac{1}{1+\theta_{2}}}}{|\gamma'(2^{j-1})|^{\frac{1-\theta_{2}}{1+\theta_{2}}}}\lesssim
\begin{cases}
  0, & |E_\lambda|\geq2^{j_0}; \\
  0, & 2^{j_0}|\gamma'(2^{j_{0}-1})|\leq|E_{\lambda}|<2^{j_0};\\
  |E_\lambda|^{{\frac{\alpha-\theta_{2}}{(\omega_{0,1}-\varepsilon)(1+\theta_{2})}}+{\frac{\theta_{2}}{1+\theta_{2}}}}, &|E_{\lambda}|<2^{j_0}|\gamma'(2^{j_{0}-1})|.
\end{cases}$$
The above implies
\begin{align}\label{eq:5.3}
\Upsilon_1\lesssim
\begin{cases}
  \|f\|^{\frac{1}{1+\theta_2}}_{L^{\frac{1}{\theta_2}}(\mathbb{R})}\|g\|^{\frac{1}{1+\theta_2}}_{L^{1}(\mathbb{R})}, & |E_\lambda|\geq2^{j_0}|\gamma'(2^{j_{0}-1})|; \\
  \left(|E_\lambda|^{\frac{\alpha}{1+\theta_{2}}}+|E_\lambda|^{{\frac{\alpha-\theta_{2}}{(\omega_{0,1}-\varepsilon)(1+\theta_{2})}}+{\frac{\theta_{2}}{1+\theta_{2}}}}\right)
\|f\|^{\frac{1}{1+\theta_2}}_{L^{\frac{1}{\theta_2}}(\mathbb{R})}\|g\|^{\frac{1}{1+\theta_2}}_{L^{1}(\mathbb{R})}, &|E_{\lambda}|<2^{j_0}|\gamma'(2^{j_{0}-1})|.
\end{cases}
 \end{align}

For $\Upsilon_2$, we obviously have the estimate
\begin{align}\label{eq:5.4}
\Upsilon_{2}\lesssim
\begin{cases}
   \|f\|^{\frac{1}{1+\theta_2}}_{L^{\frac{1}{\theta_2}}(\mathbb{R})}\|g\|^{\frac{1}{1+\theta_2}}_{L^{1}(\mathbb{R})}, &  |E_{\lambda}|\geq2^{j_0}|\gamma'(2^{j_{0}-1})|; \\
  |E_\lambda|^{\frac{1-\theta_{2}}{1+\theta_{2}}}\|f\|^{\frac{1}{1+\theta_2}}_{L^{\frac{1}{\theta_2}}(\mathbb{R})}\|g\|^{\frac{1}{1+\theta_2}}_{L^{1}(\mathbb{R})}, & |E_{\lambda}|<2^{j_0}|\gamma'(2^{j_{0}-1})|.
 \end{cases}
  \end{align}

For $\Upsilon_3$, which can be written as
$$\left[\sum\limits_{  \substack{j>j_{0}+2   \\ |E_{\lambda}|\geq2^j{|\gamma'(2^{j-1})|} }} 2^{{\frac{\alpha}{1+\theta_{2}}}j}+
\sum\limits_{  \substack{j>j_{0}+2   \\ |E_{\lambda}|<2^j{|\gamma'(2^{j-1})|} }} \frac{2^{{\frac{\alpha-\theta_{2}}{1+\theta_{2}}}j}
|E_\lambda|^{\frac{\theta_{2}}{1+\theta_{2}}}}{|\gamma'(2^{j-1})|^{\frac{\theta_{2}}{1+\theta_{2}}}}\right]
\|f\|^{\frac{1}{1+\theta_2}}_{L^{\frac{1}{\theta_2}}(\mathbb{R})}\|g\|^{\frac{1}{1+\theta_2}}_{L^{1}(\mathbb{R})}.$$
For any $\varepsilon>0$, by Lemma \ref{lemma 2.1}, we can find
$$\sum\limits_{  \substack{j>j_{0}+2   \\ |E_{\lambda}|\geq2^j{|\gamma'(2^{j-1})|} }} 2^{{\frac{\alpha}{1+\theta_{2}}}j}\lesssim
\begin{cases}
  |E_\lambda|^{\frac{\alpha}{(1+\theta_{2})(\omega_{\infty,1}-\varepsilon)}}, & |E_\lambda|\geq2^{j_0}|\gamma'(2^{j_{0}-1})|;\\
  0, & |E_\lambda|<2^{j_0}|\gamma'(2^{j_{0}-1})|.
\end{cases}$$
Moreover, for $\varepsilon\in(0, \omega_{\infty,1}-1]$, by Lemma \ref{lemma 2.1}, then the following estimate is valid:
$$\sum\limits_{  \substack{j>j_{0}+2   \\ |E_{\lambda}|<2^j{|\gamma'(2^{j-1})|} }} \frac{2^{{\frac{\alpha-\theta_{2}}{1+\theta_{2}}}j}
|E_\lambda|^{\frac{\theta_{2}}{1+\theta_{2}}}}{|\gamma'(2^{j-1})|^{\frac{\theta_{2}}{1+\theta_{2}}}}\lesssim
\begin{cases}
  |E_\lambda|^{\frac{\alpha+\theta_2(\omega_{\infty,2}-\omega_{\infty,1}+2\varepsilon)}{(1+\theta_{2})(\omega_{\infty,2}+\varepsilon)}}, & |E_\lambda|\geq2^{j_0}|\gamma'(2^{j_{0}-1})|; \\
  |E_\lambda|^{\frac{\theta_{2}}{1+\theta_{2}}}, & |E_\lambda|<2^{j_0}|\gamma'(2^{j_{0}-1})|.
\end{cases}$$
Because of $|E_\lambda|^{\frac{\alpha}{(1+\theta_{2})(\omega_{\infty,1}-\varepsilon)}}< |E_\lambda|^{\frac{\alpha+\theta_{2}(\omega_{\infty,2}-\omega_{\infty,1}+2\varepsilon)}{(1+\theta_{2})(\omega_{\infty,2}+\varepsilon)}}$ holding for $|E_\lambda|\geq2^{j_0}|\gamma'(2^{j_{0}-1})|$, which comes from the fact that $\alpha<\theta_2(\omega_{\infty,1}-\varepsilon)$ with $\varepsilon\in(0, \omega_{\infty,1}-1]$, we then arrive at
\begin{align}\label{eq:5.5}
\Upsilon_{3}\lesssim
\begin{cases}
  |E_\lambda|^{\frac{\alpha+\theta_{2}(\omega_{\infty,2}-\omega_{\infty,1}+2\varepsilon)}{(1+\theta_{2})(\omega_{\infty,2}+\varepsilon)}}\|f\|^{\frac{1}{1+\theta_2}}_{L^{\frac{1}{\theta_2}}(\mathbb{R})}\|g\|^{\frac{1}{1+\theta_2}}_{L^{1}(\mathbb{R})}, & |E_\lambda|\geq2^{j_0}|\gamma'(2^{j_{0}-1})|; \\
  |E_\lambda|^{\frac{\theta_{2}}{1+\theta_{2}}}\|f\|^{\frac{1}{1+\theta_2}}_{L^{\frac{1}{\theta_2}}(\mathbb{R})}\|g\|^{\frac{1}{1+\theta_2}}_{L^{1}(\mathbb{R})}, & |E_\lambda|<2^{j_0}|\gamma'(2^{j_{0}-1})|.
\end{cases}
 \end{align}

Notice that $\omega_{0,1}>1$, we can set $\varepsilon\in(0, \omega_{0,1}-1]$, and it follows from $\alpha<\theta_2$ that $$|E_\lambda|^{{\frac{\alpha-\theta_{2}}{(\omega_{0,1}-\varepsilon)(1+\theta_{2})}}+{\frac{\theta_{2}}{1+\theta_{2}}}}\leq|E_\lambda|^{\frac{\alpha}{1+\theta_{2}}} ~~\textrm{and} ~~|E_\lambda|^{\frac{\theta_2}{1+\theta_{2}}}<|E_\lambda|^{\frac{\alpha}{1+\theta_{2}}}$$ for $|E_{\lambda}|<2^{j_0}|\gamma'(2^{j_{0}-1})|$. Furthermore, since $\alpha>1-\theta_2$, one can check that $|E_\lambda|^{\frac{\alpha}{1+\theta_{2}}}\leq |E_\lambda|^{\frac{1-\theta_{2}}{1+\theta_{2}}}$ holds for $|E_{\lambda}|<2^{j_0}|\gamma'(2^{j_{0}-1})|$ directly. These estimates, combined with the estimates \eqref{eq:5.2}-\eqref{eq:5.5} and the fact that $ |E_\lambda|^{\frac{\alpha+\theta_{2}(\omega_{\infty,2}-\omega_{\infty,1}+2\varepsilon)}{(1+\theta_{2})(\omega_{\infty,2}+\varepsilon)}}\geq 1$ for $|E_\lambda|\geq2^{j_0}|\gamma'(2^{j_{0}-1})|$, lead to
\begin{align}\label{eq:5.6}
\lambda^{\frac{1}{1+\theta_2}}|E_{\lambda}|\lesssim
\begin{cases}
   |E_\lambda|^{\frac{\alpha+\theta_{2}(\omega_{\infty,2}-\omega_{\infty,1}+2\varepsilon)}{(1+\theta_{2})(\omega_{\infty,2}+\varepsilon)}}\|f\|^{\frac{1}{1+\theta_2}}_{L^{\frac{1}{\theta_2}}(\mathbb{R})}\|g\|^{\frac{1}{1+\theta_2}}_{L^{1}(\mathbb{R})}, & |E_\lambda|\geq2^{j_0}|\gamma'(2^{j_{0}-1})|; \\
  |E_\lambda|^{\frac{1-\theta_{2}}{1+\theta_{2}}}
\|f\|^{\frac{1}{1+\theta_2}}_{L^{\frac{1}{\theta_2}}(\mathbb{R})}\|g\|^{\frac{1}{1+\theta_2}}_{L^{1}(\mathbb{R})}, &|E_{\lambda}|<2^{j_0}|\gamma'(2^{j_{0}-1})|.
\end{cases}
 \end{align}
Therefore, we obtain a similar estimate with \eqref{eq:4.12}. Here, we used \eqref{eq:4.13}. As in Proposition \ref{proposition 4.1}, we conclude that $H_{\alpha,\gamma}$ maps $L^{2r( \varepsilon )}(\mathbb{R})\times L^{1   }(\mathbb{R})$ to $L^{r(\varepsilon ), \infty}(\mathbb{R})$, where $\frac{1}{r( \varepsilon ) }=\frac{2(\omega_{\infty,2}-\alpha)}{2\omega_{\infty,2}-\omega_{\infty,1}}-\varepsilon $ with $\varepsilon$ small enough. Therefore, we finish the proof of Proposition \ref{proposition 5.1}.\end{proof}

\begin{remark}\label{remark 5.2}
Let $\alpha\in(0, \frac{1}{2})$, from the process of the proof of Proposition \ref{proposition 5.1}, we have
\begin{align}\label{eq:5.7}
\left\|H_{\alpha,\gamma}(f,g)\right\|_{L^{\frac{\omega_{\infty,2}}{\omega_{\infty,2}+ (1-\alpha)\omega_{\infty,1}-\alpha}+\varepsilon, \infty}(\mathbb{R})}\lesssim \|f\|_{L^{\frac{1}{1-\alpha}}(\mathbb{R})}\left\|g\right\|_{L^{1}(\mathbb{R})}
\end{align}
for $\varepsilon>0$ small enough.

Indeed, we should only repeat the previous process to prove Proposition \ref{proposition 5.1} except for establishing \eqref{eq:5.6}. Here, we assume that $\alpha\leq1-\theta_2$, this can be realized when $\alpha\in(0, \frac{1}{2})$. We then have that
$|E_\lambda|^{\frac{1-\theta_{2}}{1+\theta_{2}}}\leq |E_\lambda|^{\frac{\alpha}{1+\theta_{2}}}$ holds for $|E_{\lambda}|<2^{j_0}|\gamma'(2^{j_{0}-1})|$, which further implies
\begin{align*}
\lambda^{\frac{1}{1+\theta_2}}|E_{\lambda}|\lesssim
\begin{cases}
   |E_\lambda|^{\frac{\alpha+\theta_{2}(\omega_{\infty,2}-\omega_{\infty,1}+2\varepsilon)}{(1+\theta_{2})(\omega_{\infty,2}+\varepsilon)}}\|f\|^{\frac{1}{1+\theta_2}}_{L^{\frac{1}{\theta_2}}(\mathbb{R})}\|g\|^{\frac{1}{1+\theta_2}}_{L^{1}(\mathbb{R})}, & |E_\lambda|\geq2^{j_0}|\gamma'(2^{j_{0}-1})|; \\
  |E_\lambda|^{\frac{\alpha}{1+\theta_{2}}}
\|f\|^{\frac{1}{1+\theta_2}}_{L^{\frac{1}{\theta_2}}(\mathbb{R})}\|g\|^{\frac{1}{1+\theta_2}}_{L^{1}(\mathbb{R})}, &|E_{\lambda}|<2^{j_0}|\gamma'(2^{j_{0}-1})|.
\end{cases}
 \end{align*}
Notice that $\frac{\alpha+\theta_{2}(\omega_{\infty,2}-\omega_{\infty,1}+2\varepsilon)}{(1+\theta_{2})(\omega_{\infty,2}+\varepsilon)}\leq\frac{\alpha}{1+\theta_{2}}$ is equivalent to $\theta_2\leq\frac{\alpha(\omega_{\infty,2}-1+\varepsilon)}{\omega_{\infty,2}-\omega_{\infty,1}+2\varepsilon}$. This can be established by the fact that $\alpha<\frac{\alpha(\omega_{\infty,2}-1+\varepsilon)}{\omega_{\infty,2}-\omega_{\infty,1}+2\varepsilon} $ with $\varepsilon\in(0, \omega_{\infty,1}-1)$. Thus
\begin{align*}
\lambda|E_{\lambda}|^{\frac{\omega_{\infty,2}+ \theta_2(\omega_{\infty,1}-\varepsilon)-\alpha+\varepsilon}{\omega_{\infty,2}+\varepsilon}}\lesssim \|f\|_{L^{\frac{1}{\theta_2}}(\mathbb{R})}\|g\|_{L^{1}(\mathbb{R})}
\end{align*}
holds for all $0<|E_\lambda|<\infty$. On the other hand, since $\frac{\omega_{\infty,2}-\omega_{\infty,1}}{\omega_{\infty,2}-1}<\frac{\alpha}{1-\alpha}$, we have $1-\alpha\leq \frac{\alpha(\omega_{\infty,2}-1+\varepsilon)}{\omega_{\infty,2}-\omega_{\infty,1}+2\varepsilon}$ for $\varepsilon>0$ small enough.\footnote{We note that $\frac{\alpha}{1-\alpha}<\frac{\omega_{\infty,2}}{\omega_{\infty,2}-\omega_{\infty,1}+1}$ holds for all $\alpha\in(0, \frac{1}{2})$ since $\omega_{\infty,1}>1$.} Therefore, we can take $\theta_2=1-\alpha$. Observe that $$\frac{\omega_{\infty,2}+ (1-\alpha)(\omega_{\infty,1}-\varepsilon)-\alpha+\varepsilon}{\omega_{\infty,2}+\varepsilon}<\frac{\omega_{\infty,2}+ (1-\alpha)\omega_{\infty,1}-\alpha} {\omega_{\infty,2}}$$ for $\varepsilon>0$ small enough, this gives the assertion \eqref{eq:5.7}.
\end{remark}

\begin{proposition}\label{proposition 5.2}
Let $\gamma $ be the same as in Theorem \ref{maintheorem2}. Then, for $\varepsilon>0$ small enough, there exists a positive constant $C$, such that
$$\left\|H_{\alpha,\gamma}(f,g)\right\|_{L^{\tilde{r}(\varepsilon),\infty}(\mathbb{R})}\leq C \|f\|_{L^{2\tilde{r}(\varepsilon)}(\mathbb{R})}\left\|g\right\|_{L^{1}(\mathbb{R})},$$
where $\frac{1}{\tilde{r}( \varepsilon ) }:=\frac{2(\omega_{\infty,1}-\alpha)}{\omega_{\infty,1}}-\varepsilon $ with $\varepsilon$ small enough.
\end{proposition}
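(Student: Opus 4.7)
The plan is to adapt the proof of Proposition \ref{proposition 5.1} by substituting the interpolated bound \eqref{eq:5.a} for \eqref{eq:5.b} in the sum over $j>j_0+2$, in direct analogy with the way Proposition \ref{proposition 4.2} modifies the treatment of $\Delta_{3,2}$ in Proposition \ref{proposition 4.1}. The switch is forced by the new hypothesis $\frac{\alpha}{1-\alpha}\geq\omega_{\infty,1}$, which reverses the direction of the inequality between $\alpha$ and $\theta_2(\omega_{\infty,1}-\varepsilon)$ appearing in the calculation of $\Upsilon_3$.

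First I would reuse the dyadic decomposition $H_{\alpha,\gamma}(f,g)\ls\sum_{j\in\mathbb{Z}}2^{(\alpha-1)j}H_{\alpha,\gamma,j}(f,g)$ together with the already-established level-set bounds \eqref{eq:5.1}, \eqref{eq:4.3}, \eqref{eq:4.4}, and Proposition \ref{proposition 3.2}, and interpolate to obtain \eqref{eq:5.a} for $j>j_0+2$ with any $\theta_2\in(0,1)$. Chebyshev's inequality then produces a decomposition $\lambda^{1/(1+\theta_2)}|E_\lambda|\ls\Upsilon_1+\Upsilon_2+\Upsilon_3$ of the same shape as \eqref{eq:5.2}, except that in $\Upsilon_3$ the minimum now reads $\min\{2^j,|E_\lambda|/|\gamma'(2^{j-1})|\}$ coming from \eqref{eq:5.a}.

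The heart of the argument is the estimation of $\Upsilon_3$. I would split it according to whether $|E_\lambda|\geq 2^j|\gamma'(2^{j-1})|$ or not. In the first subcase the minimum equals $2^j$ and Lemma \ref{lemma 2.1} together with a geometric sum gives a bound of order $|E_\lambda|^{\alpha/((1+\theta_2)(\omega_{\infty,1}-\varepsilon))}$. In the second subcase, the series
\begin{align*}
\sum 2^{\frac{\alpha-1}{1+\theta_2}j}\,\frac{|E_\lambda|^{\frac{1}{1+\theta_2}}}{|\gamma'(2^{j-1})|^{\frac{1}{1+\theta_2}}}
\end{align*}
converges geometrically since $\omega_{\infty,1}>\alpha$, so it is controlled by its smallest admissible term where $2^j|\gamma'(2^{j-1})|\approx|E_\lambda|$, and a short computation shows this produces the same exponent $|E_\lambda|^{\alpha/((1+\theta_2)(\omega_{\infty,1}-\varepsilon))}$. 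Thus the two subcases balance, which is precisely the role assigned to \eqref{eq:5.a} in the introduction. The pieces $\Upsilon_1$ and $\Upsilon_2$ are then handled as in Proposition \ref{proposition 5.1}, but under the reversed restriction $\alpha\geq\theta_2(\omega_{\infty,1}-\varepsilon)$ that replaces the earlier $\alpha<\theta_2(\omega_{\infty,1}-\varepsilon)$, mirroring the modification made in Proposition \ref{proposition 4.2}.

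Assembling these estimates gives
\begin{align*}
\lambda|E_\lambda|^{(1+\theta_2)-\frac{\alpha}{\omega_{\infty,1}-\varepsilon}}\ls\|f\|_{L^{1/\theta_2}(\mathbb{R})}\|g\|_{L^{1}(\mathbb{R})},
\end{align*}
and optimizing by choosing $\theta_2$ close to $\frac{\omega_{\infty,1}-\alpha}{\omega_{\infty,1}}$ yields the target exponent $\frac{1}{\tilde r(\varepsilon)}=\frac{2(\omega_{\infty,1}-\alpha)}{\omega_{\infty,1}}-\varepsilon$ together with the norm $\|f\|_{L^{2\tilde r(\varepsilon)}(\mathbb{R})}$. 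The hypothesis $\frac{\alpha}{1-\alpha}\geq\omega_{\infty,1}$ is precisely what guarantees $\alpha\geq\theta_2(\omega_{\infty,1}-\varepsilon)$ at this optimal $\theta_2$ for $\varepsilon$ small enough, so all the constraints on $\theta_2$ arising from $\Upsilon_1$, $\Upsilon_2$ and $\Upsilon_3$ are simultaneously compatible. The main technical obstacle I anticipate is carrying out this compatibility check cleanly, in particular verifying that the small-$|E_\lambda|$ contributions of $\Upsilon_1$ and $\Upsilon_2$ remain dominated by the exponent $\frac{\alpha}{(1+\theta_2)(\omega_{\infty,1}-\varepsilon)}$ produced by $\Upsilon_3$, so that the single balanced estimate above holds over the full range $0<|E_\lambda|<\infty$.
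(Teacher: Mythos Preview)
Your treatment of $\Upsilon_3$ via the undiluted bound \eqref{eq:5.a} is viable and in fact cleaner than the paper's route: the paper deliberately weakens \eqref{eq:5.a} to \eqref{eq:5.000}, i.e.\ replaces $\min\{2^j,|E|/|\gamma'(2^{j-1})|\}$ by $\min\{2^j,2^{j\theta_2}|E|^{1-\theta_2}/|\gamma'(2^{j-1})|^{1-\theta_2}\}$, and ends up with a large-$|E_\lambda|$ exponent that also involves $\omega_{\infty,2}$. Your claim that the two subcases of $\Upsilon_3$ balance at $|E_\lambda|^{\alpha/((1+\theta_2)(\omega_{\infty,1}-\varepsilon))}$ is correct once one observes that at the threshold index $j_*$ one has $|E_\lambda|/|\gamma'(2^{j_*-1})|\approx 2^{j_*}$ and $2^{j_*}\lesssim|E_\lambda|^{1/(\omega_{\infty,1}-\varepsilon)}$. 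Either route forces the same upper bound $\theta_2<\frac{\omega_{\infty,1}-\alpha-\varepsilon}{\omega_{\infty,1}-\varepsilon}$ (coming from $\Upsilon_2$) and hence the same endpoint $\frac{1}{\tilde r(\varepsilon)}$.

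The gap is in your handling of $\Upsilon_1$ and the associated compatibility check. The sign condition governing $\Upsilon_1$ in Proposition~\ref{proposition 5.1} is $\alpha<\theta_2$ (it enters the summation of $\Upsilon_{1,2}$ and $\Upsilon_{1,3}$), not $\alpha<\theta_2(\omega_{\infty,1}-\varepsilon)$; the latter was a constraint on $\Upsilon_3$ there, which your new $\Upsilon_3$ argument no longer needs. More seriously, your claim that ``$\frac{\alpha}{1-\alpha}\geq\omega_{\infty,1}$ is precisely what guarantees $\alpha\geq\theta_2(\omega_{\infty,1}-\varepsilon)$ at the optimal $\theta_2$'' is false: at $\theta_2\approx\frac{\omega_{\infty,1}-\alpha}{\omega_{\infty,1}}$ one has $\theta_2(\omega_{\infty,1}-\varepsilon)\approx\omega_{\infty,1}-\alpha$, so your condition becomes $2\alpha\geq\omega_{\infty,1}$, which can fail under the hypotheses of Theorem~\ref{maintheorem2} (take e.g.\ $\alpha=0.6$, $\omega_{\infty,1}=1.4$). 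What the hypothesis \emph{does} give is the weaker inequality $\alpha\geq\theta_2$, and it is under this assumption that $\Upsilon_{1,2}$ and $\Upsilon_{1,3}$ must be re-summed, producing the bound \eqref{eq:5.8}; this in turn imposes the additional lower constraint $\theta_2\geq\frac{\omega_{0,2}-\alpha+\varepsilon}{\omega_{0,2}+\omega_{0,1}-1}$, whose compatibility with the upper constraint is exactly the verification the paper carries out using $\omega_{\infty,1}\geq\omega_{0,2}$ and $\omega_{0,1}>1$. Your outline omits this entire step.
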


\begin{proof}

Repeating the proof of Proposition \ref{proposition 5.1}, we then arrive at \eqref{eq:5.2}. Consider $\Upsilon_{1}$, we now assume that $\alpha>\theta_2$. For any $\varepsilon>0$, by Lemma \ref{lemma 2.1}, one sees
$$\sum\limits_{  \substack{j<j_{0}   \\ 2^j{|\gamma'(2^{j-1})|}\leq|E_{\lambda}|<2^j }} 2^{{\frac{\alpha-\theta_{2}}{1+\theta_{2}}}j}|E_\lambda|^{\frac{\theta_{2}}{1+\theta_{2}}}\lesssim
\begin{cases}
  0, & |E_\lambda|\geq2^{j_0}; \\
  |E_\lambda|^{\frac{\alpha+\theta_2(\omega_{0,2}-1+\varepsilon)}{(\omega_{0,2}+\varepsilon)(1+\theta_{2})}}, & 2^{j_0}|\gamma'(2^{j_{0}-1})|\leq|E_{\lambda}|<2^{j_0};\\
  |E_\lambda|^{\frac{\alpha+\theta_2(\omega_{0,2}-1+\varepsilon)}{(\omega_{0,2}+\varepsilon)(1+\theta_{2})}}, &|E_{\lambda}|<2^{j_0}|\gamma'(2^{j_{0}-1})|.
\end{cases}$$
Notice that $\alpha-\omega_{0,2}-\varepsilon+\theta_2(\omega_{0,2}-1+\varepsilon)<0$, this along with Lemma \ref{lemma 2.1} yields
$$\sum\limits_{  \substack{j<j_{0}   \\ |E_{\lambda}|<2^j|\gamma'(2^{j-1})| }} \frac{2^{{\frac{\alpha-1}{1+\theta_{2}}j}}|E_\lambda|^{\frac{1}{1+\theta_{2}}}}{|\gamma'(2^{j-1})|^{\frac{1-\theta_{2}}{1+\theta_{2}}}}\lesssim
\begin{cases}
  0, & |E_\lambda|\geq2^{j_0}; \\
  0, & 2^{j_0}|\gamma'(2^{j_{0}-1})|\leq|E_{\lambda}|<2^{j_0};\\
  |E_\lambda|^{\frac{\alpha+\omega_{0,1}-\omega_{0,2}-2\varepsilon+\theta_{2}(\omega_{0,2}-1+\varepsilon)}{(\omega_{0,1}-\varepsilon)(1+\theta_{2})}}, &|E_{\lambda}|<2^{j_0}|\gamma'(2^{j_{0}-1})|.
\end{cases}$$
It is easy to check that $$|E_\lambda|^{{\frac{\alpha}{1+\theta_{2}}}}\leq|E_\lambda|^{\frac{\alpha+\theta_2(\omega_{0,2}-1+\varepsilon)}{(\omega_{0,2}+\varepsilon)(1+\theta_{2})}}\leq
|E_\lambda|^{\frac{\alpha+\omega_{0,1}-\omega_{0,2}-2\varepsilon+\theta_{2}(\omega_{0,2}-1+\varepsilon)}{(\omega_{0,1}-\varepsilon)(1+\theta_{2})}}$$
holds for $|E_{\lambda}|<2^{j_0}|\gamma'(2^{j_{0}-1})|$. We then have
\begin{align}\label{eq:5.8}
\Upsilon_1\lesssim
\begin{cases}
  \|f\|^{\frac{1}{1+\theta_2}}_{L^{\frac{1}{\theta_2}}(\mathbb{R})}\|g\|^{\frac{1}{1+\theta_2}}_{L^{1}(\mathbb{R})}, & |E_\lambda|\geq2^{j_0}|\gamma'(2^{j_{0}-1})|;\\
  |E_\lambda|^{\frac{\alpha+\omega_{0,1}-\omega_{0,2}-2\varepsilon+\theta_{2}(\omega_{0,2}-1+\varepsilon)}{(\omega_{0,1}-\varepsilon)(1+\theta_{2})}}
\|f\|^{\frac{1}{1+\theta_2}}_{L^{\frac{1}{\theta_2}}(\mathbb{R})}\|g\|^{\frac{1}{1+\theta_2}}_{L^{1}(\mathbb{R})}, &|E_{\lambda}|<2^{j_0}|\gamma'(2^{j_{0}-1})|.
\end{cases}
 \end{align}

$\Upsilon_2$ has the same estimate as in Proposition \ref{proposition 5.1}, and we turn to $\Upsilon_3$. It is different from \eqref{eq:5.b}, we here need to bound \eqref{eq:5.a} as
\begin{align}\label{eq:5.000}
  \left\|H_{\alpha,\gamma,j}(f,g)\chi_{E}\right\|_{L^{\frac{1}{1+\theta_2}}(\mathbb{R})}
\leq \min\left\{2^j,  \frac{2^{j\theta_2}|E|^{1-\theta_2}}{|\gamma'(2^{j-1})|^{1-\theta_2}} \right\} \|f\|_{L^{\frac{1}{\theta_2}}(\mathbb{R})}\|g\|_{L^{1}(\mathbb{R})}.
 \end{align}
Then, we can write
\begin{align*}
\Upsilon_{3}=\sum\limits_{j>j_{0}+2}2^{{\frac{\alpha-1}{1+\theta_{2}}}j}\min\left\{2^j,  \frac{2^{j\theta_2}|E_{\lambda}|^{1-\theta_2}}{|\gamma'(2^{j-1})|^{1-\theta_2}} \right\}^{\frac{1}{1+\theta_2}} \|f\|^{\frac{1}{1+\theta_2}}_{L^{\frac{1}{\theta_2}}(\mathbb{R})}\|g\|^{\frac{1}{1+\theta_2}}_{L^{1}(\mathbb{R})},
 \end{align*}
which can be rewritten as
$$\left[\sum\limits_{  \substack{j>j_{0}+2   \\ |E_{\lambda}|\geq2^j{|\gamma'(2^{j-1})|} }} 2^{{\frac{\alpha}{1+\theta_{2}}}j}+
\sum\limits_{  \substack{j>j_{0}+2   \\ |E_{\lambda}|<2^j{|\gamma'(2^{j-1})|} }} \frac{2^{{\frac{\alpha-1+\theta_{2}}{1+\theta_{2}}}j}
|E_\lambda|^{\frac{1-\theta_{2}}{1+\theta_{2}}}}{|\gamma'(2^{j-1})|^{\frac{1-\theta_{2}}{1+\theta_{2}}}}\right]
\|f\|^{\frac{1}{1+\theta_2}}_{L^{\frac{1}{\theta_2}}(\mathbb{R})}\|g\|^{\frac{1}{1+\theta_2}}_{L^{1}(\mathbb{R})}.$$
For any $\varepsilon>0$, by Lemma \ref{lemma 2.1}, we also have
$$\sum\limits_{  \substack{j>j_{0}+2   \\ |E_{\lambda}|\geq2^j{|\gamma'(2^{j-1})|} }} 2^{{\frac{\alpha}{1+\theta_{2}}}j}\lesssim
\begin{cases}
  |E_\lambda|^{\frac{\alpha}{(1+\theta_{2})(\omega_{\infty,1}-\varepsilon)}}, & |E_\lambda|\geq2^{j_0}|\gamma'(2^{j_{0}-1})|;\\
  0, & |E_\lambda|<2^{j_0}|\gamma'(2^{j_{0}-1})|.
\end{cases}$$
We assume that $\alpha<(1-\theta_2)(\omega_{\infty,1}-\varepsilon)$ with $\varepsilon\in(0, \omega_{\infty,1})$, it follows from Lemma \ref{lemma 2.1} that
$$\sum\limits_{  \substack{j>j_{0}+2   \\ |E_{\lambda}|<2^j{|\gamma'(2^{j-1})|} }} \frac{2^{{\frac{\alpha-1+\theta_{2}}{1+\theta_{2}}}j}
|E_\lambda|^{\frac{1-\theta_{2}}{1+\theta_{2}}}}{|\gamma'(2^{j-1})|^{\frac{1-\theta_{2}}{1+\theta_{2}}}}\lesssim
\begin{cases}
  |E_\lambda|^{\frac{\alpha+(1-\theta_2)(\omega_{\infty,2}-\omega_{\infty,1}+2\varepsilon)}{(1+\theta_{2})(\omega_{\infty,2}+\varepsilon)}}, & |E_\lambda|\geq2^{j_0}|\gamma'(2^{j_{0}-1})|; \\
  |E_\lambda|^{\frac{1-\theta_{2}}{1+\theta_{2}}}, & |E_\lambda|<2^{j_0}|\gamma'(2^{j_{0}-1})|.
\end{cases}$$
Based on the fact $|E_\lambda|^{\frac{\alpha}{(1+\theta_{2})(\omega_{\infty,1}-\varepsilon)}}< |E_\lambda|^{\frac{\alpha+(1-\theta_{2})(\omega_{\infty,2}-\omega_{\infty,1}+2\varepsilon)}{(1+\theta_{2})(\omega_{\infty,2}+\varepsilon)}}$ holds for $|E_\lambda|\geq2^{j_0}|\gamma'(2^{j_{0}-1})|$, we deduce that
\begin{align}\label{eq:5.11}
\Upsilon_{3}\lesssim
\begin{cases}
  |E_\lambda|^{\frac{\alpha+(1-\theta_{2})(\omega_{\infty,2}-\omega_{\infty,1}+2\varepsilon)}{(1+\theta_{2})(\omega_{\infty,2}+\varepsilon)}}\|f\|^{\frac{1}{1+\theta_2}}_{L^{\frac{1}{\theta_2}}(\mathbb{R})}\|g\|^{\frac{1}{1+\theta_2}}_{L^{1}(\mathbb{R})}, & |E_\lambda|\geq2^{j_0}|\gamma'(2^{j_{0}-1})|; \\
  |E_\lambda|^{\frac{1-\theta_{2}}{1+\theta_{2}}}\|f\|^{\frac{1}{1+\theta_2}}_{L^{\frac{1}{\theta_2}}(\mathbb{R})}\|g\|^{\frac{1}{1+\theta_2}}_{L^{1}(\mathbb{R})}, & |E_\lambda|<2^{j_0}|\gamma'(2^{j_{0}-1})|.
\end{cases}
 \end{align}

Combining \eqref{eq:5.8}, \eqref{eq:5.4} and \eqref{eq:5.11}, then the following estimate is valid:
\begin{align*}
\lambda^{\frac{1}{1+\theta_2}}|E_{\lambda}|\lesssim
\begin{cases}
   |E_\lambda|^{\frac{\alpha+(1-\theta_{2})(\omega_{\infty,2}-\omega_{\infty,1}+2\varepsilon)}{(1+\theta_{2})(\omega_{\infty,2}+\varepsilon)}}\|f\|^{\frac{1}{1+\theta_2}}_{L^{\frac{1}{\theta_2}}(\mathbb{R})}\|g\|^{\frac{1}{1+\theta_2}}_{L^{1}(\mathbb{R})}, & |E_\lambda|\geq2^{j_0}|\gamma'(2^{j_{0}-1})|;\\
\left(|E_\lambda|^{\frac{\alpha+\omega_{0,1}-\omega_{0,2}-2\varepsilon+\theta_{2}(\omega_{0,2}-1+\varepsilon)}{(\omega_{0,1}-\varepsilon)(1+\theta_{2})}}+|E_\lambda|^{\frac{1-\theta_2}{1+\theta_2}}\right)
\|f\|^{\frac{1}{1+\theta_2}}_{L^{\frac{1}{\theta_2}}(\mathbb{R})}\|g\|^{\frac{1}{1+\theta_2}}_{L^{1}(\mathbb{R})}, &|E_{\lambda}|<2^{j_0}|\gamma'(2^{j_{0}-1})|.
\end{cases}
\end{align*}
If we have $$\frac{\alpha+(1-\theta_{2})(\omega_{\infty,2}-\omega_{\infty,1}+2\varepsilon)}{(1+\theta_{2})(\omega_{\infty,2}+\varepsilon)}<
\frac{1-\theta_2}{1+\theta_2}\leq
\frac{\alpha+\omega_{0,1}-\omega_{0,2}-2\varepsilon+\theta_{2}(\omega_{0,2}-1+\varepsilon)}{(\omega_{0,1}-\varepsilon)(1+\theta_{2})},$$ which equals to
$\frac{\omega_{0,2}-\alpha+\varepsilon}{\omega_{0,2}+\omega_{0,1}-1}\leq\theta_2<\frac{\omega_{\infty,1}-\alpha-\varepsilon}{\omega_{\infty,1}-\varepsilon}$ for $\varepsilon>0$ small enough, we then obtain that
\begin{align}\label{eq:5.12}
\lambda|E_{\lambda}|^{\frac{\omega_{\infty,1}-\alpha-\varepsilon+ \theta_2(2\omega_{\infty,2}- \omega_{\infty,1} +3\varepsilon)}{\omega_{\infty,2}+\varepsilon}}\lesssim \|f\|_{L^{\frac{1}{\theta_2}}(\mathbb{R})}\|g\|_{L^{1}(\mathbb{R})}
\end{align}
holds for all $0<|E_\lambda|<\infty$. Indeed, $\theta_2<\frac{\omega_{\infty,1}-\alpha-\varepsilon}{\omega_{\infty,1}-\varepsilon}$ comes from the assumption in estimating $\Upsilon_{3}$. In order to show that we can take $\theta_2\geq\frac{\omega_{0,2}-\alpha+\varepsilon}{\omega_{0,2}+\omega_{0,1}-1}$, it suffices to show that $$\frac{\omega_{0,2}-\alpha+\varepsilon}{\omega_{0,2}+\omega_{0,1}-1}<\frac{\omega_{\infty,1}-\alpha-\varepsilon}{\omega_{\infty,1}-\varepsilon}\leq \alpha $$ holds for $\varepsilon>0$ small enough, which is trivially verified by the condition that $\frac{\alpha}{1-\alpha}\geq \omega_{\infty,1}\geq \omega_{0,2}$ and $\omega_{0,1}>1$. Thus, we obtain \eqref{eq:5.12} for $\theta_2<\frac{\omega_{\infty,1}-\alpha-\varepsilon}{\omega_{\infty,1}-\varepsilon}$ with $\varepsilon>0$ small enough.

Notice that $$\frac{\omega_{\infty,1}-\alpha-\varepsilon+ \theta_2(2\omega_{\infty,2}- \omega_{\infty,1} +3\varepsilon)}{\omega_{\infty,2}+\varepsilon}<\frac{2(\omega_{\infty,1}-\alpha-\varepsilon)}{\omega_{\infty,1}-\varepsilon}$$ when $\theta_2<\frac{\omega_{\infty,1}-\alpha-\varepsilon}{\omega_{\infty,1}-\varepsilon}$ with $\varepsilon>0$ small enough, and $\frac{2(\omega_{\infty,1}-\alpha-\varepsilon)}{\omega_{\infty,1}-\varepsilon}<\frac{2(\omega_{\infty,1}-\alpha)}{\omega_{\infty,1}}$, we conclude that $H_{\alpha,\gamma}$ maps $L^{2\tilde{r}( \varepsilon )}(\mathbb{R})\times L^{1   }(\mathbb{R})$ to $L^{\tilde{r}(\varepsilon ), \infty}(\mathbb{R})$, where $\frac{1}{\tilde{r}( \varepsilon ) }=\frac{2(\omega_{\infty,1}-\alpha)}{\omega_{\infty,1}}-\varepsilon $ with $\varepsilon$ small enough. Therefore, we finish the proof of Proposition \ref{proposition 5.2}.
\end{proof}

\bigskip

\bigskip

\noindent  Junfeng Li

\smallskip

\noindent  School of Mathematical Sciences, Dalian University of Technology, Dalian, 116024,  People's Republic of China

\smallskip

\noindent {\it E-mail}: \texttt{junfengli@dlut.edu.cn}

\bigskip

\noindent Haixia Yu (Corresponding author) and Minqun Zhao

\smallskip

\noindent  Department of Mathematics, Shantou University, Shantou, 515821, People's Republic of China

\smallskip

\noindent {\it E-mails}: \texttt{hxyu@stu.edu.cn} (H. Yu)

\noindent\phantom{{\it E-mails:}} \texttt{22mqzhao@stu.edu.cn} (M. Zhao)

\end{document}